\theoremstyle{thmstyleone}%
\newtheorem{theorem}{Theorem}
\newtheorem{proposition}[theorem]{Proposition}%
\theoremstyle{thmstyletwo}%
\newtheorem{remark}{Remark}%
\theoremstyle{thmstylethree}%
\newtheorem{definition}{Definition}%
\newtheorem{assumption}{Assumption}
\newtheorem{corollary}{Corollary}
\def\Mbound{\varsigma}
\renewcommand{\vartheta}{\rho}
\def\blue#1{\textcolor{blue}{#1}}
\def\cS{{\cal S}}
\def\norm#1{\|#1\|}
\renewcommand{\iota}{{\footnotesize{l}}}
\begin{document}

\title[Article Title]{On solving a rank regularized minimization problem via equivalent factorized column-sparse regularized models}


\author[1]{\fnm{Wenjing} \sur{Li}}\email{liwenjingsx@163.com}

\author*[1]{\fnm{Wei} \sur{Bian}}\email{bianweilvse520@163.com}

\author[2]{\fnm{Kim-Chuan} \sur{Toh}}\email{mattohkc@nus.edu.sg}

\affil[1]{\orgdiv{School of Mathematics}, \orgname{Harbin Institute of Technology}, \orgaddress{\city{Harbin},\postcode{150001}, \country{China}}}

\affil[2]{\orgdiv{Department of Mathematics}, \orgname{National University of Singapore}, \orgaddress{\street{10 Lower Kent Ridge Road}, \postcode{119076}, \country{Singapore}}}


\abstract{Rank regularized minimization problem is an ideal model for the low-rank matrix completion/recovery problem. The matrix factorization approach can transform the high-dimensional rank regularized problem to a low-dimensional factorized column-sparse regularized problem. The latter can greatly facilitate fast computations in applicable algorithms, but needs to overcome the simultaneous non-convexity of the loss and regularization functions. In this paper, we consider the factorized column-sparse regularized model. Firstly, we optimize this model with bound constraints, and establish a certain equivalence between the optimized factorization problem and rank regularized problem. Further, we strengthen the optimality condition for stationary points of the factorization problem and define the notion of strong stationary point. Moreover, we establish the equivalence between the factorization problem and its  nonconvex relaxation in the sense of global minimizers and strong stationary points. To solve the factorization problem, we design two types of algorithms and give an adaptive method to reduce their computation. The first algorithm is from the relaxation point of view and its iterates own some properties from global minimizers of the factorization problem after finite iterations. We give some analysis on the convergence of its iterates to a strong stationary point. The second algorithm is designed for directly solving the factorization problem. We improve the PALM algorithm introduced by Bolte et al. (Math Program Ser A 146:459-494, 2014) for the factorization problem and give its improved convergence results. Finally, we conduct numerical experiments to show the promising performance of the proposed model and algorithms for low-rank matrix completion.}

\keywords{low-rank matrix recovery, column-sparse regularization, nonconvex relaxation, strong stationary point, convergence analysis}


\pacs[MSC Classification]{90C46, 90C26, 65K05}

\maketitle

\section{Introduction}
Matrix completion is to recover a matrix from  an incomplete data matrix with a few observed entries. In general, it is impossible to achieve accurate matrix completion from partially observed entries because the unobserved entries can take any values without further assumption. However, in many areas such as data mining and machine learning, since the high-dimensional data can be represented or reconstructed by low-dimensional features, the data matrices are often of low-rank. The low-rank property of data matrix not only helps to avoid the ill-posedness, but also open a viable avenue for matrix completion. Thus, over the past decade, low-rank matrix completion has attracted significant attention due to its wide applications, including collaborative filtering \cite{WestIEEE2016}, system identification \cite{LiuSIMA2009}, image inpainting \cite{LiuTIP2016} and classification \cite{CabralPAMI2015}.

Let $Z^\diamond\in\mathbb{R}^{m\times n}$ be a low-rank matrix to be recovered (which we call as a target matrix) with $\text{rank}(Z^\diamond)=r\ll\min\{m,n\}$, and $\varGamma\subseteq\{1,\ldots,m\}\times\{1,\ldots,n\}$ denote the index set of observed entries. To recover the missing entries of the low-rank matrix $Z^\diamond$, a natural idea is to solve the rank minimization problem:
\begin{equation*}
\min_{Z\in\mathbb{R}^{m\times n}}~\text{rank}(Z),~~\text{s.t.}~{\mathcal{P}_{\varGamma}(Z)=\mathcal{P}_{\varGamma}(Z^\diamond)},
\end{equation*}
where the operator $\mathcal{P}_{\varGamma}:\mathbb{R}^{m\times n}\rightarrow \mathbb{R}^{m\times n}$ satisfies $(\mathcal{P}_{\varGamma}(Z))_{ij}=Z_{ij}\ \text{if}\ (i,j)\in\varGamma\ \text{and}\ 0\ \text{otherwise}$. However, the rank minimization problem is NP-hard in general due to the nature of the rank function. In real-world applications, the observed entries in data matrices typically contain a small amount of noise. Hence, it is necessary to complete the low-rank matrix from partially sampled noisy entries. Let $\mathcal{E}$ represent the unknown noise matrix. Suppose we observe $\mathcal{P}_{\varGamma}(\bar{Z}^\diamond)$ with $\bar{Z}^\diamond=Z^\diamond+\mathcal{E}$. Since there is no prior knowledge about the noise in practice, the low-rank matrix completion model can be reasonably expressed by the following rank regularized least squares problem:
\begin{equation}\label{lstsql}
\min_{Z\in\mathbb{R}^{m\times n}}\dfrac{1}{2}\Vert \mathcal{P}_{\varGamma}(Z-\bar{Z}^\diamond)\Vert_F^2+\lambda\,\text{rank}(Z),
\end{equation}
where $\lambda$ is a positive parameter to trade off the low-rank requirement and data fidelity. Different from most of the existing works focusing on the relaxation forms of model (\ref{lstsql}), in this paper, we are particularly interested in the following original and more general rank regularized problem on matrix recovery:
\begin{equation}\label{LRMP}
\min_{Z\in\mathbb{R}^{m\times n}}\frac{1}{2}f(Z)+\lambda\,\text{rank}(Z),
\end{equation}
where the loss function $f:\mathbb{R}^{m\times n}\rightarrow\mathbb{R}$ is assumed to be a locally Lipschitz continuous function, and it could possibly depend on 
some data such as $\bar{Z}^\diamond$. In general, the formulation of $f$ is related to the observation $\mathcal{P}_{\varGamma}(\bar{Z}^\diamond)$ of the target matrix $Z^\diamond$. Model (\ref{LRMP}) is not only applicable to recover a low-rank matrix from sparse corruptions, but can also accommodate the robust principal component analysis problem \cite{Fan2020TNNLS}. For the case with Gaussian noise, the squared loss $f(Z)=\|\mathcal{P}_{\varGamma}(Z-\bar{Z}^\diamond)\|_F^2/2$ is extensively utilized. Moreover, to address the cases with non-Gaussian noise, sparse noise or outliers, it is often necessary to choose $f$ as $\ell_1$-loss, Huber loss \cite{huber1973robust} or quantile loss \cite{Koenker2001QR} functions.

In order to avoid the computational intractability of the rank regularized problem, a popular approach is to replace the rank function by a tractable relaxation. Given a matrix $X\in\mathbb{R}^{m\times n}$, some common relaxations of $\text{rank}(X)$ are nuclear norm (also called trace norm or Ky Fan-norm) $\|X\|_*:=\sum_{i=1}^{\min\{m,n\}}\sigma_i(X)$ \cite{Srebro2005}, weighted nuclear norm \cite{Gu_2014_CVPR}, truncated nuclear norm \cite{HuPAMI2013}, max-norm $\|X\|_{\max}:=\min_{X=UV^{\mathbb{T}}}\|U\|_{2,\infty}\|V\|_{2,\infty}$ \cite{Srebro2005} and Schatten-$p$ ($0<p<1$) norm $\|X\|_{S_p}:=\big(\sum_{i=1}^{\min\{m,n\}}\sigma_i^p(X)\big)^{1/p}$ \cite{ShangAAAI2016}, where $\sigma_i(X)$ denotes the $i$-th largest singular value of $X$. In particular, $\|X\|_{S_p}^p$ equals to $\|X\|_*$ when $p=1$ and equals to $\text{rank}(X)$ when $p=0$. Based on the convexity and tractability of nuclear norm, there exist various algorithms for solving the nuclear norm regularized problems \cite{Cai2010SIOPT,Toh2010accelerated}. Nevertheless, for the nuclear norm regularized problems with noiseless data, the high probability recovery often depends on the uniform sampling distribution \cite{Candes2009exact}, which is unlikely to be satisfied in practical applications. Under non-uniform sampling, both the max-norm and weighted nuclear norm regularizations have been shown to outperform the vanilla nuclear norm regularization through empirical comparisons \cite{Cai2016ES,Fang2018MP,Srebro2010collaborative}. Moreover, different from the equal treatment of all singular values in the nuclear norm, the weighted nuclear norm, truncated nuclear norm and Schatten-$p$ ($0<p<1$) norm regularizations are often used to reduce the bias in the singular values of the constructed matrices. 
As one can observe from the above discussions, all the regularizers that are based on singular
values require the computation of the 
singular value decomposition (SVD) of the underlying matrix,
which is computationally expensive for large-scale matrices.
Even though based on the closed forms of some proximal operators, one only needs to compute 
partial SVD's of the iteration matrices when the 
the solution is low-rank, computing a partial SVD based on the usual  
Lanczos-type methods through 
matrix-vector products \cite{Toh2010accelerated,yao2018large} can 
still incur considerable computational cost for large scale problems.
To overcome the computational challenges posed by SVD based regularized models, we
turn to factorization models in this paper.
 

For any given matrix $Z$ with $\text{rank}(Z)\leq d$, it can be represented by $Z=XY^{\mathbb{T}}$ with factors $X\in\mathbb{R}^{m\times d}$ and $Y\in\mathbb{R}^{n\times d}$. This matrix factorization technique can reduce the dimensions of variables in 
low-rank optimization models and is a popular SVD-free rank surrogate method to recover low-rank matrices of large dimension. When $\text{rank}(Z)\leq d$, a well-known factored formulation of the nuclear norm is $\|Z\|_*=\min_{XY^{\mathbb{T}}=Z}\frac{1}{2}(\|X\|_F^2+\|Y\|_F^2)$ with $X\in\mathbb{R}^{m\times d}$ and $Y\in\mathbb{R}^{n\times d}$. Recently, a similar factorization characterization of the Schatten-$p$ ($0<p<1$) norm is developed and can be regarded as a relaxation of the factored formulation of the rank function \cite{Fan2019factor}. Moreover, some promising performance of these factored formulations has been shown in experiments. Inspired by the above ideas, we will consider the equivalent factorization as a surrogate for the rank function in the rank regularized problem (\ref{LRMP}).

Denote $\text{nnzc}(A)$ as the number of nonzero columns of matrix $A$.
When $\text{rank}(Z)\leq d$, it has the following equivalent factored formulation with respect to $X\in\mathbb{R}^{m\times d}$ and $Y\in\mathbb{R}^{n\times d}$:
\begin{equation}\label{rank-rel}
\text{rank}(Z)=\min_{XY^{\mathbb{T}}=Z}\text{nnzc}(X)=\min_{XY^{\mathbb{T}}=Z}\text{nnzc}(Y)=\min_{XY^{\mathbb{T}}=Z}\big(\text{nnzc}(X)+\text{nnzc}(Y)\big)/2.
\end{equation}
Moreover, based on $\{Z\in\mathbb{R}^{m\times n}:\text{rank}(Z)\leq d\}=\{XY^{\mathbb{T}}:X\in\mathbb{R}^{m\times d},Y\in\mathbb{R}^{n\times d}\}$, one can easily verify that problem \eqref{LRMP} with $\text{rank}(Z)\leq d$, i.e.,
\begin{equation}\label{LRMPR}
\min_{Z\in\mathbb{R}^{m\times n},\text{rank}(Z)\leq d}\frac{1}{2}f(Z)+\lambda\,\text{rank}(Z),
\end{equation}
can equivalently be expressed by the following factorized column-sparse regularized problem in the sense of optimal value:
\begin{equation}\label{FGL0}
\min_{(X,Y)\in\mathbb{R}^{m\times d}\times\mathbb{R}^{n\times d}}F_0(X,Y):=f(XY^{\mathbb{T}})+\lambda\big(\text{nnzc}(X)+\text{nnzc}(Y)\big).
\end{equation}
Without loss of generality, we assume that {$m\geq n$} throughout this paper.
Note that both $f(XY^{\mathbb{T}})$ and $\text{nnzc}(X)+\text{nnzc}(Y)$ are nonconvex functions. It is not hard to verify the following equivalent transformations in the sense of global minimizers between problems (\ref{LRMPR}) and (\ref{FGL0}).
\begin{equation*}
\begin{split}
(E1) &\text{ If $(X,Y)$ is a global minimizer of (\ref{FGL0}), then $XY^{\mathbb{T}}$ is a global minimizer of (\ref{LRMPR}).}\\
(E2) &\text{ If $Z$ is a global minimizer of (\ref{LRMPR}), then $(X,Y)\in\mathbb{R}^{m\times d}\times\mathbb{R}^{n\times d}$ satisfying $XY^{\mathbb{T}}=$}\\
&\text{ $Z$ and ${\text{nnzc}}(X)={\text{nnzc}}(Y)=\text{rank}(Z)$ is a global minimizer of (\ref{FGL0}).}
\end{split}
\end{equation*}
This implies that the regularizer $\lambda\big(\text{nnzc}(X)+\text{nnzc}(Y)\big)$ in (\ref{FGL0}) plays the same role of rank reduction as $\lambda\,\text{rank}(Z)$ in (\ref{LRMPR}). Define $\mathcal{I}:\mathbb{R}\rightarrow\mathbb{R}$ by $\mathcal{I}(t)=1$ if $t\neq0$ and $\mathcal{I}(t)=0$ otherwise. For any matrix $A\in\mathbb{R}^{r\times d}$,
the  function $\text{nnzc}(A)$ can equivalently be replaced by its $\ell_{p,0}$ $(p\geq 0)$ norm, i.e. $\ell_{p,0}(A):=\sum_{i=1}^d\mathcal{I}(\|A_i\|_p)$, where $A_i$ is the $i$th column of $A$ and $\|\cdot\|_p$ is the vector $p$-norm. By adding suitable perturbative quadratic terms of $X$ and $Y$ to the objective function of (\ref{FGL0}) and replacing $\text{nnzc}$ by the $\ell_{2,0}$ norm, the corresponding problem is considered in \cite{Pan2022factor}, where the quadratic perturbation terms are indispensable throughout the theoretical analysis in that paper. Meanwhile, majorized alternating proximal algorithms with convergence to the limiting-critical point of this perturbed problem are proposed when the gradient of $f$ is Lipschitz continuous. Theoretically, there exists no direct equivalence result between the perturbed problem and the rank regularized problem (\ref{LRMPR}), even though one can consider the perturbed model as an approximation of problem (\ref{LRMPR}). Moreover, to the best of our knowledge, there is no research on model (\ref{FGL0}) including the analysis on its global minimizers and the relationships with problem (\ref{LRMPR}).

In many applications of matrix recovery, such as image restoration, score sheet completion and collaborative filtering (such as the well-known Netflix problem), it is usually accessible to obtain a positive upper bound for the norm of the target matrix $Z^\diamond$. The estimated prior bound can be imposed on the considered unconstrained optimization model \eqref{FGL0} as a constraint. In fact, the bound-constrained model may rule out some undesirable global minimizers of the unconstrained model, which implies that an appropriate bound constraint is likely to improve the likelihood of recovering the target matrix from the proposed model.
Thus, we make the reasonable assumption on the target matrix $Z^\diamond$ such that $\|Z^\diamond\|\leq\varsigma^2$ for an a priori given $\varsigma$. With the above assumption, it motivated us to improve (\ref{FGL0}) by considering the following discontinuous bound-constrained model:
\begin{equation}\label{FGL0C}
\min_{(X,Y)\in\Omega^1\times\Omega^2}F_0(X,Y),
\end{equation}
where $\Omega^1:=\{X\in\mathbb{R}^{m\times d}:\|X_i\|\leq\Mbound,i=1,\ldots,d\}$ and $\Omega^2:=\{Y\in\mathbb{R}^{n\times d}:\|Y_i\|\leq\Mbound,i=1,\ldots,d\}$ with $X_i$ and $Y_i$ being the $i$-th column of $X$ and $Y$. 
We will show that the target matrix $Z^\diamond$ belongs to $\{XY^{\mathbb{T}}:(X,Y)\in\Omega^1\times\Omega^2\}$ 
and analyze the relationships of the global minimizers and local minimizers between models (\ref{LRMPR}), (\ref{FGL0}) and (\ref{FGL0C}).

Recall the well-known $\ell_0$ norm of vector $x\in\mathbb{R}^n$, that is $\|x\|_0=\sum_{i=1}^{n}\mathcal{I}(x_i)$. Obviously, $\ell_0$ norm can be regarded as a special case of the $\ell_{p,0}$ $(p\geq 0)$ norm. There are many continuous relaxation functions to the $\ell_0$ norm such as capped-$\ell_1$ penalty \cite{Peleg2008}, smoothly clipped absolute deviation (SCAD) penalty \cite{Fan2001} and minimax concave penalty (MCP) \cite{Zhang2010}, which  can also be extended to the continuous relaxations of the $\ell_{p,0}$ $(p\geq 0)$ norm. For any given $\vartheta>0$ and $I\in\{1,2\}^d$, define
\begin{equation*}
\Theta_\vartheta(C):=\sum_{i=1}^d\theta_\vartheta(\|C_i\|)\mbox{ and }\Theta_{\vartheta,I}(C):=\sum_{i=1}^d\theta_{\vartheta,I_i}(\|C_i\|),~\forall C\in\mathbb{R}^{r\times d},
\end{equation*}
where $C_i$ is the $i$-th column of $C$, $\theta_\vartheta(t):=\min_{j\in\{1,2\}}\{\theta_{\vartheta,j}(t)\}$ with $\theta_{\vartheta,1}(t):=t/\vartheta$ and $\theta_{\vartheta,2}(t):=1$ for any $t\geq0$. This relaxation can be regarded as the capped-$\ell_1$ relaxation to the $\ell_{2,0}$ norm. By definition, it is easy to see that $\Theta_\vartheta(C) \leq \Theta_{\vartheta,I}(C)$. Next, we will further consider the corresponding continuous relaxation model of problem (\ref{FGL0C}) as follows:
\begin{equation}\label{FGL0R}
\min_{(X,Y)\in\Omega^1\times\Omega^2}F(X,Y):=f(XY^{\mathbb{T}})+\lambda\big(\Theta_\nu(X)+\Theta_\nu(Y)\big),
\end{equation}
where $\nu$ is a given positive constant. Due to the fact that $\lim_{\nu\downarrow0}(\Theta_\nu(X)+\Theta_\nu(Y))=\text{nnzc}(X)+\text{nnzc}(Y)$ and $\Theta_\nu(X)+\Theta_\nu(Y)\leq\text{nnzc}(X)+\text{nnzc}(Y),\,\forall(X,Y)\in\mathbb{R}^{m\times d}\times\mathbb{R}^{n\times d}$, problem (\ref{FGL0R}) can be viewed as an approximation of problem (\ref{FGL0C}) from below. We will establish the equivalence between (\ref{FGL0C}) and (\ref{FGL0R}) with suitable $\nu$ in this paper.

For some specific $\ell_{p,0}$ regularized problems with convex loss functions, there exist some equivalent relaxation theories \cite{Bian2020,LeThi2015,Li2022,Soubies2017}. Under some appropriate choices of the parameter in the relaxation function \cite{Bian2020}, $\ell_0$ regularized problem and its corresponding capped-$\ell_1$ relaxation problem own some equivalent relationships. Further, for the sparse group $\ell_0$ regularized problems with overlapping group sparsity, a class of stationary points to its capped-$\ell_1$ relaxation problem are defined in \cite{Li2022} and proved to be equivalent to some local minimizers of the considered $\ell_{1,0}$/$\ell_{2,0}$ regularized problem. These equivalent relaxations provide some theoretical basis for designing algorithms to obtain premium local minimizers for this class of $\ell_{p,0}$ regularized problems with convex loss functions. Considering the equivalence between the function $\text{nnzc}(\cdot)$ and $\ell_{p,0}$ norm, it is natural to ask the question: does problem (\ref{FGL0C}) own some equivalence with its relaxation (\ref{FGL0R})? Different from the convex loss function considered in \cite{Li2022}, the nonconvexity of $f(XY^{\mathbb{T}})$ is the main difficulty in the analysis and plays a crucial role in its applications. To the best of our knowledge, all the existing stationary points of (\ref{FGL0C}) and (\ref{FGL0R}) own weaker optimality property than their local minimizers in general. By exploring some properties of their global minimizers, we will strengthen the conditions of the existing stationary points for (\ref{FGL0C}) and (\ref{FGL0R}) to establish their equivalence. Moreover, the strengthened conditions may rule out non-global local minimizers of (\ref{FGL0C}) and (\ref{FGL0R}), respectively. These stationary points are called as strong stationary points. Besides, in terms of computation, the internal structure of $f(XY^{\mathbb{T}})$ also motivates us to further reduce the computational cost for the designed algorithms on the basis of variable dimension reduction from (\ref{LRMPR}) to (\ref{FGL0C}).

Both problems (\ref{FGL0C}) and (\ref{FGL0R}) belong to the class of nonconvex and nonsmooth problems. When $f$ is differentiable and its gradient is Lipschitz continuous, the classical proximal alternating linearized minimization (PALM) algorithm \cite{Bolte2014Proximal}, its inertial version (iPALM) \cite{SabachSIIMS2016} and block prox-linear (BPL) algorithm \cite{YinJSC2017} can be applied to solve problems (\ref{FGL0C}) and (\ref{FGL0R}), because the proximal operators of the regularizers can be calculated effectively. But in theory, they only has the convergence to the limiting-critical point sets of (\ref{FGL0C}) and (\ref{FGL0R}), respectively. Moreover, a recently proposed DC (Difference of Convex functions) algorithm \cite{LeThi2021arxiv} is also able to solve an equivalent DC problem of the relaxation problem (\ref{FGL0R}), but it only owns the convergence to the critical point set of the DC problem. Notice that both the above mentioned limiting-critical point and critical point (of the equivalent DC relaxation problem) own weaker optimality conditions than our defined strong stationary points in this paper. Next, in order to obtain the strong stationary points of \eqref{FGL0C}, we design two algorithms where the first attempts to solve \eqref{FGL0C} through its relaxation reformulation and 
the second to solve  \eqref{FGL0C} directly. The good performance, similarities and differences of the proposed two algorithms will be analyzed in detail.

\medskip

The main contributions of this paper are summarized as follows.
\begin{itemize}
\item {\bf{Enhance (\ref{FGL0}) to (\ref{FGL0C}) and analyze the necessary and sufficient conditions for the local minimizers of \eqref{LRMPR} and \eqref{FGL0C}.}} We enhance model \eqref{FGL0} to \eqref{FGL0C} by adding a prior bound on the target matrix as a constraint, and further analyze the relationships on the global minimizers and local minimizers between problems (\ref{LRMPR}), (\ref{FGL0}) and (\ref{FGL0C}), as shown in Fig.\,\ref{probs-rela}. In addition, for the local minimizers of problems (\ref{LRMPR}) and (\ref{FGL0C}), we establish their necessary and sufficient conditions.
	
\item {\bf{Analyze and strengthen the optimality conditions for problem (\ref{FGL0C}) and its continuous relaxation (\ref{FGL0R}).}} We explore some properties of the global minimizers and incorporate them into the definitions of  strong stationary points for problem (\ref{FGL0C}) and its relaxation (\ref{FGL0R}). Notably, these properties are not necessary conditions for their local minimizers. Furthermore, in terms of the optimality condition, the defined strong stationary points are stronger than the existing stationary points for both problems (\ref{FGL0C}) and (\ref{FGL0R}). In particular, we establish the equivalence between (\ref{FGL0C}) and (\ref{FGL0R}) in the sense of their global minimizers and the defined strong stationary points, respectively.
	
\item {\bf{Propose algorithms to solve problem (\ref{FGL0C}) (as well as (\ref{FGL0R})) from two perspectives.}} First, based on the relaxation of (\ref{FGL0C}) given by (\ref{FGL0R}), we design an alternating proximal gradient algorithm with adaptive indicator and prove its convergence to the strong stationary point set of (\ref{FGL0C}). After finite iterations, the iterates generated by the proposed algorithm own the derived properties of global minimizers of (\ref{FGL0C}) and their column sparsity no longer changes. When $f$ is semialgebraic, the convergence rate of the iterates is at least R-sublinear. Second, in terms of directly solving problem (\ref{FGL0C}) without relaxation, we make some improvements to the PALM algorithm in \cite{Bolte2014Proximal} and give some further progress on the convergence of PALM, which improves the existing algorithms on the attainable solution quality for problem \eqref{FGL0C}. In particular, we also show the difference on the convergence results between the two proposed algorithms.
	
\item {\bf{Reduce the computational cost from both modelling and algorithmic aspects.}} 
Based on the matrix factorization technique, the 
problem (\ref{FGL0C}) can be of much smaller scale than the rank regularized problem (\ref{LRMPR}). By taking into account of the sparsity of the iterates, we provide an adaptive dimension reduction method to reduce the computational cost of the proposed algorithms and maintain the same theoretical results for problem (\ref{FGL0C}). Moreover, we propose an adaptive tuning strategy on the regularization parameter $\lambda$ for (\ref{FGL0C}) and show its effectiveness in some numerical experiments.
\end{itemize}
\begin{figure}
\centering
\includegraphics[width=4.5in]{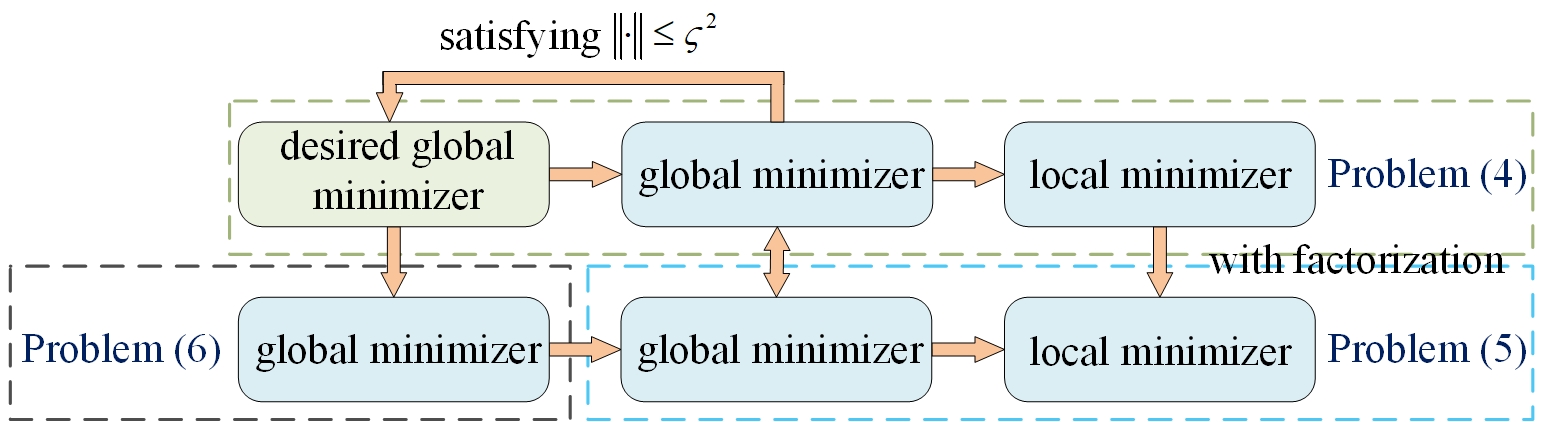}
\caption{Relationships between problems (\ref{LRMPR}), (\ref{FGL0}) and (\ref{FGL0C})}\label{probs-rela}
\end{figure}

The remaining part of this paper is organized as follows. In Section \ref{section2}, we introduce some necessary notations, definitions and preliminaries. In Section \ref{section3}, we analyze the equivalence and inclusion relationships on the global mimizers and local minimizers for problems (\ref{LRMPR}), (\ref{FGL0}) and (\ref{FGL0C}). In Section \ref{section4}, we analyze some properties of the global minimizers, local minimizers and our defined strong stationary points for problem (\ref{FGL0C}) and its relaxation problem (\ref{FGL0R}). Moreover, we establish their equivalence in the sense of global minimizers and  strong stationary points, respectively. In Section \ref{section5}, we propose two algorithms to solve (\ref{FGL0C}) as well as (\ref{FGL0R}), and give some convergence analysis. Finally, we present some numerical 
experiments to verify the theoretical results in Section \ref{section6} and show the good performance of the proposed algorithms for the low-rank matrix completion problem in
the form of \eqref{FGL0C}.

\section{Notations and Preliminaries}\label{section2}
In this section, we introduce some notations, definitions and preliminaries that will be used in 
this paper. 
\subsection{Notations}
For any positive integer $d$, let $[d]:=\{1,\ldots,d\}$. We use $\mathbb{N}$ to denote the set of all nonnegative integers. Let $\mathbb{R}^{m\times n}$ be the space of 
$m\times n$ real matrices endowed with the trace inner product $\langle X,Y\rangle={\rm trace}(X^{\mathbb{T}}Y)$ and its induced Frobenius norm $\|\cdot\|_F$. Denote $E$ as an identity matrix. For $t\in\mathbb{R}$ and $\xi\in\mathbb{R}^d$, denote $\lceil t\rceil$ as the rounded integer of $t$, $\bm{t}$ as the matrix of all $t$ and $\xi^\downarrow$ as the vector in $\mathbb{R}^d$ consisting of $\xi_i,i\in[d]$ arranged in a nonincreasing order. For $C\in\mathbb{R}^{r\times d}$, denote its spectral norm as $\|C\|$, $\sigma_i(C)$ the $i$-th largest singular value of $C$, $C_i\in\mathbb{R}^r$ the $i$-th column of $C$, $\mathcal{I}_{C}:=\{i\in[d]:C_i\neq\bm{0}\}$, ${B}_{\varepsilon}(C):=\{Z\in\mathbb{R}^{r\times d}:\|Z-C\|_F\leq\varepsilon\}$ and ${B}_{\varepsilon}:={B}_{\varepsilon}(\bm{0})$. For $(A,B)\in\mathbb{R}^{m\times d}\times\mathbb{R}^{n\times d}$ and $\varepsilon>0$, denote ${B}_{\varepsilon}(A,B):=\{(X,Y)\in\mathbb{R}^{m\times d}\times\mathbb{R}^{n\times d}:\|[X,Y]-[A,B]\|_F\leq\varepsilon\}$. 
For $C\in\mathbb{R}^{r\times d}$, $\mathcal{C}\subseteq\mathbb{R}^{r\times d}$ and $J\subseteq[d]$, $C_J:=\big[C_{j_1},\ldots,C_{j_{|J|}}\big]\in\mathbb{R}^{r\times|J|}$, $C_J^{\mathbb{T}}:=(C_J)^{\mathbb{T}}$ and $J^c=[d]\setminus{J}$, where $|J|$ is the cardinality of $J$, $j_i\in J$, $\forall i\in[|J|]$ with $j_1<\ldots<j_{|J|}$, and $\mathcal{C}_J:=\{Z_J:Z\in\mathcal{C}\}$. For a nonempty closed convex set $\cS\subseteq\mathbb{R}^{m\times n}$, its interior is defined by ${\rm{int}}\cS$, and the indicator function $\delta_\cS$ is defined by $\delta_\cS(Z)=0$ if $Z\in \cS$ and $\delta_\cS(Z)=\infty$ otherwise. 
The normal cone to $\cS$ at $Z$ is defined by $N_\cS(Z):=\{U\in\mathbb{R}^{m\times n}:\langle U,\bar{Z}-Z\rangle\leq 0,\forall \bar{Z}\in \cS\}$, and 
the projection operator to $\cS$ at $Z$ is defined by 
$\text{proj}_{\cS}(Z):={\rm{argmin}}_{X\in \cS}\|X-Z\|_F^2$. For a proper lower semicontinuous function $h:\mathbb{R}^{m\times n}\rightarrow\overline{\mathbb{R}}:=(-\infty,\infty]$, the proximal operator of $h$ is defined by ${\rm{prox}}_h(Z):={\rm{argmin}}_{X\in\mathbb{R}^{m\times n}}\{h(X)+\frac{1}{2}\|X-Z\|_F^2\}$. For a linear operator $\mathcal{A}:\mathbb{R}^{m\times n}\rightarrow\mathbb{R}^p$, denote its adjoint as $\mathcal{A}^*$. 
Denote $\partial f(\cdot)$ the limiting subdifferential of $f$ and $\partial_Zf(XY^{\mathbb{T}}):=\partial f(Z)|_{Z=XY^{\mathbb{T}}}$. Denote the sets of global minimizers and local minimizers as $\mathcal{G}_{(\vartriangle)}$ and $\mathcal{L}_{(\vartriangle)}$ for any problem ($\vartriangle$). 
The abbreviation \emph{iff} denotes ``if and only if''.
\subsection{Preliminaries}
First, recall the definitions of two classes of subdifferentials for a proper function, where we call an extended real-valued function $h: \mathbb{R}^{m\times n}\rightarrow\overline{\mathbb{R}}$ proper if its domain ${\rm dom}\, h:=\{X\in\mathbb{R}^{m\times n}: h(X)<\infty\}\neq \emptyset$. 
\begin{definition}[\bf Subdifferentials]\cite{Rockafellar1998,Mordukhovich2006}\label{subdefs}
For a proper function $h:\mathbb{R}^{m\times n}\rightarrow\overline{\mathbb{R}}$, the regular (Fr\'echet) subdifferential of $h$ at $X\in {\rm dom}\, h$ is defined by\vspace{-0.1cm}
\[\vspace{-0.1cm}
\widehat{\partial} h(X)=\Big\{\zeta\in\mathbb{R}^{m\times n} :\; \liminf_{Z\rightarrow X,Z\neq X}\frac{h(Z)-h(X)-\left<\zeta,Z-X\right>}{\|Z-X\|_F}\ge 0 \Big\},
\]
and the limiting-subdifferential of $h$ at $X\in {\rm dom}\, h$  is defined by\vspace{-0.1cm}
\[\vspace{-0.1cm}
\partial h(X)=\Big\{\zeta\in\mathbb{R}^{m\times n} :\; \exists\, X^{k}\rightarrow X, h(X^{k})\rightarrow h(X) \mbox{ and } \widehat{\partial} h(X^k)\ni\zeta^{k}\rightarrow \zeta \mbox{ as } k\rightarrow\infty\Big\}.
\]
\end{definition}

For a proper function $h:\mathbb{R}^{m\times n}\rightarrow\overline{\mathbb{R}}$ and point $X\in{\rm dom}\, h$, it holds that $\widehat{\partial}h(X)\subseteq\partial h(X)$ by \cite[Theorem 8.6]{Rockafellar1998}, $\widehat{\partial}h(X)=\{\nabla h(X)\}$ if $h$ is differentiable at $X$, and $\partial h(X)=\{\nabla h(X)\}$ if $h$ is smooth near $X$ by \cite[Exercise 8.8]{Rockafellar1998}. Moreover, if $h$ is convex, it follows from \cite[Proposition 8.12]{Rockafellar1998} that $\widehat{\partial}h(X)$ and $\partial h(X)$ coincide with the subdifferential of $h$ in the sense of convex analysis.

Set $\partial h(X)=\widehat\partial h(X)=\emptyset$ if $X\notin {\rm dom}\, h$ and ${\rm dom}\,\partial h:= \{X\in\mathbb{R}^{m\times n}:\partial h(X)\neq\emptyset\}$.

Next, recall the definition of Kurdyka-{\L}ojasiewicz (KL) property in \cite{Attouch2010MOR}, which is adopted extensively for the convergence analysis of the whole sequence generated by first-order algorithms.
\begin{definition}[\bf Kurdyka-{\L}ojasiewicz property]\cite{Attouch2010MOR}\label{KLprop}
For a proper lower semicontinuous function $h:\mathbb{R}^{m\times n}\rightarrow\overline{\mathbb{R}}$, we say that $h$ has the Kurdyka-{\L}ojasiewicz (KL) property at $\hat{Z}\in{\rm dom}\,\partial h$, if there exist $\delta\in(0,\infty]$, a continuous concave function $\varphi:[0,\delta)\rightarrow[0,\infty)$ and a neighborhood $\mathcal{V}$ of $\hat{Z}$ such that
\begin{itemize}
\item[(i)] $\varphi(0)=0$, $\varphi$ is smooth and $\varphi'>0$ on $(0,\delta)$;
\item[(ii)] for any $Z\in\mathcal{V}$ with $h(\hat{Z})<h(Z)<h(\hat{Z})+\delta$, it holds 
\begin{equation*}
\varphi'(h(Z)-h(\hat{Z})){\rm{dist}}(\bm{0},\partial h(Z))\geq 1.
\end{equation*}
\end{itemize}
\end{definition}
If $h$ has the KL property at $\hat{Z}\in{\rm{dom}}\partial h$ with $\varphi(t)=\kappa_0 t^{1-\alpha}$ for some $\kappa_0>0$ and $\alpha\in[0,1)$ in Definition \ref{KLprop}, we say that $h$ has the KL property at $\hat{Z}$ with exponent $\alpha$. If $h$ has the KL property at every point of $\mbox{dom}\partial{h}$ (with exponent $\alpha$), we say that $h$ is a KL function (with exponent $\alpha$). Note that there are many KL functions in applications, such as semi-algebraic, subanalytic and log-exp functions \cite{Attouch2010MOR,Attouch2013}.

Next, we recall a frequently used class of stationary points for  nonsmooth optimization problems.
\begin{definition}[\bf Limiting-critical point]\cite{Attouch2010MOR,Bolte2014Proximal}\label{lim-cri-def}
For a proper function $h:\mathbb{R}^{m\times n}\rightarrow\overline{\mathbb{R}}$ and a closed convex set $\Omega\subseteq\mathbb{R}^{m\times n}$, we call $X\in\mathbb{R}^{m\times n}$ satisfying $\bm{0}\in\partial\big(h(X)+\delta_\Omega(X)\big)$ the limiting-critical point of the problem $\min_{X\in\Omega}h(X)$.
\end{definition}
Given a proper function $h$, according to the Fermat's rule in \cite[Theorem 10.1]{Rockafellar1998}, any local minimizer is a limiting-critical point of the problem $\min_{X}h(X)$.

For the convenience of subsequent analysis, we now give the characterization of subdifferentials for $\text{nnzc}+\delta_{\Omega^1\times\Omega^2}$ and an equivalent formulation for the limiting-critical points of problem (\ref{FGL0C}).
\begin{proposition}\label{subd-cri-rela}
Let $h_1:\mathbb{R}^{m\times d}\rightarrow\overline{\mathbb{R}}$ and $h_2:\mathbb{R}^{n\times d}\rightarrow\overline{\mathbb{R}}$ be defined by $h_1:=\lambda\,\emph{\text{nnzc}}+\delta_{\Omega^1}$ and $h_2:=\lambda\,\emph{\text{nnzc}}+\delta_{\Omega^2}$. For any $(X,Y)\in\Omega^1\times \Omega^2$, it holds that
\begin{itemize}
\item[(i)] $h_1$ and $h_2$ are proper and lower semicontinuous, $\widehat\partial\big(h_1(X)+h_2(Y)\big)=\partial\big(h_1(X)+h_2(Y)\big)=\partial_Xh_1(X)\times\partial_Yh_2(Y)$, $\widehat\partial_Xh_1(X)=\partial_Xh_1(X)=\big\{U\in\mathbb{R}^{m\times d}:U_{\mathcal{I}_X}\in N_{\Omega^1_{\mathcal{I}_X}}(X_{\mathcal{I}_X})\big\}$ and $\widehat\partial_Yh_2(Y)=\partial_Yh_2(Y)=\big\{U\in\mathbb{R}^{n\times d}:U_{\mathcal{I}_Y}\in N_{\Omega^2_{\mathcal{I}_Y}}(Y_{\mathcal{I}_Y})\big\}$;
\item[(ii)] when $f$ is smooth, $(X,Y)$ is a limiting-critical point of (\ref{FGL0C}) iff it satisfies
\begin{equation}\label{cri-incl}
\bm{0}\in \nabla_Zf(XY^{\mathbb{T}})Y_{\mathcal{I}}+N_{\Omega^1_{\mathcal{I}}}(X_{\mathcal{I}}) \;\mbox{and}\; \bm{0}\in \nabla_Zf(XY^{\mathbb{T}})^{\mathbb{T}}X_{\mathcal{I}}+N_{\Omega^2_{\mathcal{I}}}(Y_{\mathcal{I}})\mbox{ with }\mathcal{I}=\mathcal{I}_{X}\cap\mathcal{I}_{Y}.
\end{equation}
\end{itemize}
\end{proposition}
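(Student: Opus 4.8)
\section*{Proof proposal}

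\textbf{Plan.} The plan is to establish (i) by replacing each $h_\ell$ with a function that agrees with it on a neighborhood of the base point and is \emph{separable across the $d$ columns}, and then to deduce (ii) from (i) by combining the chain rule for $f(XY^{\mathbb{T}})$ with the sum rule for a $C^1$ function plus a proper lower semicontinuous one. First, properness and lower semicontinuity of $h_1,h_2$ are immediate: $\Omega^1,\Omega^2$ are nonempty closed convex sets, and $\text{nnzc}$ is integer-valued and lower semicontinuous because each $\mathcal{I}(\|\cdot\|)$ is. Since $h_1(X)+h_2(Y)$ is a separable sum in the block variables $(X,Y)$, both its regular and its limiting subdifferential factor as the Cartesian products $\widehat\partial_X h_1(X)\times\widehat\partial_Y h_2(Y)$ and $\partial_X h_1(X)\times\partial_Y h_2(Y)$ respectively (separable-function rule, \cite[Proposition 10.5]{Rockafellar1998}). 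Hence it suffices to compute $\widehat\partial_X h_1(X)$ and $\partial_X h_1(X)$, the $Y$-block being identical by symmetry.

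\textbf{The block computation.} Fix $X\in\Omega^1$; then $\text{nnzc}(X)=|\mathcal{I}_X|$. For $Z$ in a small neighborhood of $X$, every column indexed by $\mathcal{I}_X$ stays nonzero, so $\text{nnzc}(Z)=|\mathcal{I}_X|+\sum_{i\in\mathcal{I}_X^c}\mathcal{I}(\|Z_i\|)$, and therefore $h_1$ coincides near $X$ with the column-separable function $Z\mapsto\lambda|\mathcal{I}_X|+\sum_{i\in\mathcal{I}_X}\delta_{B_\varsigma}(Z_i)+\sum_{i\in\mathcal{I}_X^c}\big(\delta_{B_\varsigma}(Z_i)+\lambda\mathcal{I}(\|Z_i\|)\big)$, where $B_\varsigma:=\{v:\|v\|\leq\varsigma\}$. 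Because the regular subdifferential is a local object, $\widehat\partial_X h_1(X)$ is the product over $i\in[d]$ of $\widehat\partial\delta_{B_\varsigma}(X_i)=N_{B_\varsigma}(X_i)$ for $i\in\mathcal{I}_X$ (convexity) and of $\widehat\partial\big(\delta_{B_\varsigma}+\lambda\mathcal{I}(\|\cdot\|)\big)(\bm{0})=\mathbb{R}^m$ for $i\in\mathcal{I}_X^c$ (the function equals $\lambda>0$ on a punctured neighborhood of $\bm{0}$ inside $B_\varsigma$, so the defining quotient blows up). Reassembling, and using that the normal cone of a product of balls is the product of normal cones, gives $\widehat\partial_X h_1(X)=\{U:U_{\mathcal{I}_X}\in N_{\Omega^1_{\mathcal{I}_X}}(X_{\mathcal{I}_X})\}$. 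To upgrade to $\partial_X h_1(X)$, take $\zeta\in\partial_X h_1(X)$ with $X^k\to X$, $h_1(X^k)\to h_1(X)$, $\widehat\partial_X h_1(X^k)\ni\zeta^k\to\zeta$; then $X^k\in\Omega^1$ eventually, and since $\lambda\,\text{nnzc}(X^k)\to\lambda|\mathcal{I}_X|$ with $\text{nnzc}$ integer-valued and $X^k\to X$, one gets $\mathcal{I}_{X^k}=\mathcal{I}_X$ for all large $k$; hence $\zeta^k_{\mathcal{I}_X}\in N_{\Omega^1_{\mathcal{I}_X}}(X^k_{\mathcal{I}_X})$, and outer semicontinuity of the normal-cone mapping of the closed convex set $\Omega^1_{\mathcal{I}_X}$ yields $\zeta_{\mathcal{I}_X}\in N_{\Omega^1_{\mathcal{I}_X}}(X_{\mathcal{I}_X})$, i.e.\ $\zeta\in\widehat\partial_X h_1(X)$. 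With the always-valid inclusion $\widehat\partial_X h_1(X)\subseteq\partial_X h_1(X)$ this proves equality and finishes (i).

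\textbf{Deducing (ii).} When $f$ is smooth, $(X,Y)\mapsto f(XY^{\mathbb{T}})$ is continuously differentiable with $\nabla_X f(XY^{\mathbb{T}})=\nabla_Z f(XY^{\mathbb{T}})Y$ and $\nabla_Y f(XY^{\mathbb{T}})=\nabla_Z f(XY^{\mathbb{T}})^{\mathbb{T}}X$. Folding the constraint into the regularizers gives $F_0+\delta_{\Omega^1\times\Omega^2}=f(XY^{\mathbb{T}})+h_1(X)+h_2(Y)$, so the $C^1$-plus-lsc sum rule \cite[Exercise 8.8]{Rockafellar1998} together with (i) shows that $(X,Y)$ is a limiting-critical point iff $\bm{0}\in\nabla_Z f(XY^{\mathbb{T}})Y+\partial_X h_1(X)$ and $\bm{0}\in\nabla_Z f(XY^{\mathbb{T}})^{\mathbb{T}}X+\partial_Y h_2(Y)$. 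Reading the first inclusion columnwise via (i), it is equivalent to $\bm{0}\in\nabla_Z f(XY^{\mathbb{T}})Y_i+N_{B_\varsigma}(X_i)$ for every $i\in\mathcal{I}_X$ (the columns $i\notin\mathcal{I}_X$ carry no constraint). For $i\in\mathcal{I}_X\setminus\mathcal{I}_Y$ one has $Y_i=\bm{0}$, so this collapses to $\bm{0}\in N_{B_\varsigma}(X_i)$, which always holds since $N_{B_\varsigma}(X_i)$ is a cone; hence only the indices $i\in\mathcal{I}_X\cap\mathcal{I}_Y=\mathcal{I}$ are effective, and collecting those columns gives $\bm{0}\in\nabla_Z f(XY^{\mathbb{T}})Y_{\mathcal{I}}+N_{\Omega^1_{\mathcal{I}}}(X_{\mathcal{I}})$. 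The symmetric argument handles the $Y$-inclusion, producing \eqref{cri-incl}. The only delicate points are the local reduction of $\text{nnzc}$ to a column-separable model and the passage from the regular to the limiting subdifferential in (i); the rest is routine subdifferential calculus.
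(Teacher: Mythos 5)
Your proposal is correct and follows essentially the same route as the paper's proof: both exploit the column-separability of $\lambda\,\text{nnzc}+\delta_{\Omega^1}$ to compute the regular subdifferential columnwise ($N_{B_\varsigma}(X_i)$ for nonzero columns, all of $\mathbb{R}^m$ at a zero column), upgrade to the limiting subdifferential via local stability of the support set and closedness of the normal-cone map, and then derive (ii) from the smooth-plus-lsc sum rule together with the observation that the inclusions on indices outside $\mathcal{I}=\mathcal{I}_X\cap\mathcal{I}_Y$ hold automatically because the relevant columns of $Y$ (resp.\ $X$) vanish and normal cones contain $\bm{0}$. The only cosmetic difference is that you pass through a local column-separable model of $h_1$ near $X$, whereas the paper uses the global separable decomposition $h_1(X)=\sum_i g_{m,\varsigma}(X_i)$ directly; this changes nothing of substance.
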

\begin{proof}
(i) Due to the lower semicontinuity of $\text{nnzc}$ and the nonempty closedness of the sets $\Omega^1$ and $\Omega^2$, it holds that $h_1$ and $h_2$ are proper and lower semicontinuous.
	
Define $g_{q,\alpha}:\mathbb{R}^q\rightarrow\mathbb{R}$ by $g_{q,\alpha}(v)=\lambda\,\text{nnzc}(v)+\delta_{B_{\alpha}}(v)$ with $\alpha>0$. Based on Definition \ref{subdefs}, we deduce that for any $v\in B_{\alpha}$, $\widehat\partial g_{q,\alpha}(v)=\widehat\partial\delta_{B_{\alpha}}(v)=N_{B_{\alpha}}(v)$ if $v\neq\bm{0}$, and $\widehat\partial g_{q,\alpha}(v)=\widehat\partial\lambda\,\text{nnzc}(v)=\mathbb{R}^q$ if $v=\bm{0}$. Further, it follows from \cite[Proposition 8.5]{Rockafellar1998} that $\widehat\partial_X h_1(X)=\Pi_{i=1}^d\widehat\partial g_{m,\Mbound}(X_i)=\big\{U\in\mathbb{R}^{m\times d}:U_{\mathcal{I}_X}\in N_{\Omega^1_{\mathcal{I}_X}}(X_{\mathcal{I}_X})\big\}$, $\widehat\partial_Y h_2(Y)=\Pi_{i=1}^d\widehat\partial g_{n,\Mbound}(Y_i)=\big\{U\in\mathbb{R}^{n\times d}:U_{\mathcal{I}_Y}\in N_{\Omega^2_{\mathcal{I}_Y}}(Y_{\mathcal{I}_Y})\big\}$ and $\widehat\partial (h_1(X)+h_2(Y))=\widehat\partial_X h_1(X)\times\widehat\partial_Y h_2(Y)$.
	
Considering that there exists an $\varepsilon>0$ such that $\mathcal{I}_X\subseteq\mathcal{I}_{U},\forall U\in B_\varepsilon(X)\cap\Omega^1$, by the closedness of $N_{\Omega^1_{\mathcal{I}_X}}$ in \cite[Lemma 2.42]{Andrzej2006nonlinear} and Definition \ref{subdefs}, we get $\partial_Xh_1(X)\subseteq\widehat\partial_Xh_1(X)$. Further, by \cite[Theorem 8.6]{Rockafellar1998}, it holds $\partial_Xh_1(X)\supseteq\widehat\partial_Xh_1(X)$ and hence $\partial_Xh_1(X)=\widehat\partial_Xh_1(X)$. Similarly, we deduce that $\partial_Yh_2(Y)=\widehat\partial_Yh_2(Y)$ and $\partial\big(h_1(X)+h_2(Y)\big)=\widehat\partial\big(h_1(X)+h_2(Y)\big)$.
	
(ii) Based on Definition \ref{lim-cri-def}, the smoothness of $f$, \cite[Exercise 8.8]{Rockafellar1998} and (i), it follows that $(X,Y)$ is a limiting-critical point of (\ref{FGL0C}), that is $\bm{0}\in\partial \big(F_0(X,Y)+\delta_{\Omega^1}(X)+\delta_{\Omega^2}(Y)\big)$ \emph{iff}
\begin{equation}\label{equv_sim}
\bm{0}\in\nabla_Zf(XY^{\mathbb{T}})Y+\partial_X h_1(X)\text{ and }\bm{0}\in\nabla_Zf(XY^{\mathbb{T}})^{\mathbb{T}}X+\partial_Y h_2(Y). 
\end{equation}
	
Next, we prove that (\ref{equv_sim}) is equivalent to (\ref{cri-incl}). First, suppose $(X,Y)$ satisfies (\ref{equv_sim}). It follows from (i) that $(X,Y)$ fulfills (\ref{cri-incl}). Conversely, consider that $(X,Y)$ satisfies (\ref{cri-incl}). Based on $Y_{\mathcal{I}_X\setminus\mathcal{I}}=\bm{0}$, $X_{\mathcal{I}_Y\setminus\mathcal{I}}=\bm{0}$ and (i), we deduce that $\bm{0}\in[\nabla_Zf(XY^{\mathbb{T}})Y+\partial_X h_1(X)]_{[d]\setminus\mathcal{I}}$ and $\bm{0}\in[\nabla_Zf(XY^{\mathbb{T}})^{\mathbb{T}}X+\partial_Y h_2(Y)]_{[d]\setminus\mathcal{I}}$. This together with (\ref{cri-incl}) yields (\ref{equv_sim}). In conclusion, the statement (ii) holds.
\end{proof}

\section{Analysis and Relationships Among Models  (\ref{LRMPR}), (\ref{FGL0}) and (\ref{FGL0C})}\label{section3}
Based on (E1) and (E2) in the Introduction, we have the equivalent transformation on the global minimizers between problems (\ref{LRMPR}) and (\ref{FGL0}). In this section, we further analyze the relationships between problems (\ref{LRMPR}), (\ref{FGL0}), and (\ref{FGL0C}). Moreover, we give an equivalent formulation for the local minimizers of problem (\ref{LRMPR}), and show the inclusion relation on the local minimizers between problems (\ref{LRMPR}) and (\ref{FGL0}).

\medskip

First, we show a basic property for those matrices satisfying the same bound $\varsigma^2$ 
as the target matrix $Z^\diamond$.
\begin{proposition}\label{rank_const_trans}
For any $Z\in\mathbb{R}^{m\times n}$ with $\emph{\text{rank}}(Z)\leq d$ and $\|Z\|\leq\varsigma^2$, there exists $(\check{X},\check{Y})\in\Omega^1\times\Omega^2$ satisfying $\check{X}\check{Y}^{\mathbb{T}}=Z$ and $\emph{\text{nnzc}}(\check{X})=\emph{\text{nnzc}}(\check{Y})=\emph{\text{rank}}(Z)$, and hence $\emph{\text{rank}}(Z)=
\min\big\{
\frac{1}{2}\big(\emph{\text{nnzc}}(X)+\emph{\text{nnzc}}(Y)\big) \mid {(X,Y)\in\Omega^1\times\Omega^2:XY^{\mathbb{T}}=Z}\big\}$.
\end{proposition}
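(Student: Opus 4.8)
The plan is to produce the factors explicitly from a singular value decomposition of $Z$ and then rescale the columns so that each column of $\check X$ and $\check Y$ has norm at most $\varsigma$. Write $r:=\text{rank}(Z)\le d$ and take a (thin) SVD $Z=U\Sigma V^{\mathbb{T}}$, where $U\in\mathbb{R}^{m\times r}$ and $V\in\mathbb{R}^{n\times r}$ have orthonormal columns and $\Sigma=\text{diag}(\sigma_1(Z),\ldots,\sigma_r(Z))$ with all $\sigma_i(Z)>0$. The natural first attempt $\check X=U\Sigma^{1/2}$, $\check Y=V\Sigma^{1/2}$ gives $\check X\check Y^{\mathbb{T}}=Z$ and $\text{nnzc}(\check X)=\text{nnzc}(\check Y)=r$, and each column has norm $\sqrt{\sigma_i(Z)}\le\sqrt{\|Z\|}\le\varsigma$, so in fact this already lands in $\Omega^1\times\Omega^2$. (If one is worried that $Z$ could have $\text{rank}(Z)<d$ and one wants the factors to live in the full $\mathbb{R}^{m\times d}\times\mathbb{R}^{n\times d}$, simply pad $\check X$ and $\check Y$ with $d-r$ zero columns; this changes neither the product nor the column-norm bound nor $\text{nnzc}$.) So I would first record the SVD, then exhibit $\check X,\check Y$, then verify the three required properties: $\check X\check Y^{\mathbb{T}}=Z$ by $U\Sigma^{1/2}(V\Sigma^{1/2})^{\mathbb{T}}=U\Sigma V^{\mathbb{T}}$; $\text{nnzc}(\check X)=\text{nnzc}(\check Y)=r$ because a column $U_i\sqrt{\sigma_i(Z)}$ is nonzero exactly when $\sigma_i(Z)>0$, which holds for all $i\le r$; and membership in $\Omega^1\times\Omega^2$ from $\|\check X_i\|=\sqrt{\sigma_i(Z)}\le\sqrt{\sigma_1(Z)}=\sqrt{\|Z\|}\le\varsigma$, and likewise for $\check Y$.

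For the displayed identity, I would argue both inequalities. The inequality ``$\le$'' is immediate from the constructed pair $(\check X,\check Y)$, since it is feasible for the minimization on the right and attains the value $\frac12(\text{nnzc}(\check X)+\text{nnzc}(\check Y))=\frac12(r+r)=r=\text{rank}(Z)$. For ``$\ge$'', let $(X,Y)\in\Omega^1\times\Omega^2$ be any pair with $XY^{\mathbb{T}}=Z$; then $\text{rank}(Z)=\text{rank}(XY^{\mathbb{T}})\le\min\{\text{rank}(X),\text{rank}(Y)\}\le\min\{\text{nnzc}(X),\text{nnzc}(Y)\}$, because the rank of a matrix is at most the number of its nonzero columns. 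Hence $\text{rank}(Z)\le\frac12(\text{nnzc}(X)+\text{nnzc}(Y))$ for every feasible pair, which gives the reverse inequality and, combined with attainment at $(\check X,\check Y)$, the claimed equality. This last observation also re-proves the middle claim of the proposition, namely that the minimum is attained with value exactly $\text{rank}(Z)$.

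I do not expect any serious obstacle here; the only point that needs a moment's care is making sure the rescaling keeps every column norm below $\varsigma$ simultaneously, which is why splitting $\Sigma$ symmetrically as $\Sigma^{1/2}\cdot\Sigma^{1/2}$ (rather than, say, $U\Sigma\cdot V$) is the right choice — it balances the two factors so that the worst column norm on either side is $\sqrt{\sigma_1(Z)}=\sqrt{\|Z\|}$, and the hypothesis $\|Z\|\le\varsigma^2$ is then used exactly once, to conclude $\sqrt{\|Z\|}\le\varsigma$. One should also note the trivial case $Z=\bm{0}$, where $r=0$ and one takes $\check X=\bm{0}$, $\check Y=\bm{0}$, so that everything holds vacuously.
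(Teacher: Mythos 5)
Your proposal is correct and follows essentially the same route as the paper: take an SVD of $Z$, split $\Sigma$ symmetrically as $\Sigma^{1/2}\cdot\Sigma^{1/2}$ so each column norm is $\sqrt{\sigma_i(Z)}\le\sqrt{\|Z\|}\le\varsigma$, pad to $d$ columns, and then combine attainment at $(\check X,\check Y)$ with the bound $\text{rank}(XY^{\mathbb{T}})\le\frac12(\text{nnzc}(X)+\text{nnzc}(Y))$ to get the displayed identity. The only cosmetic difference is that you use a thin SVD and the rank bound via $\min\{\text{nnzc}(X),\text{nnzc}(Y)\}$, while the paper uses the full SVD and the averaged bound directly; these are interchangeable.
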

\begin{proof}
For $Z\in\mathbb{R}^{m\times n}$ as supposition, by its singular value decomposition, there exist two orthogonal matrices $U\in\mathbb{R}^{m\times m}$ and $V\in\mathbb{R}^{n\times n}$ such that $Z=U\Sigma V^\mathbb{T}$, where $\Sigma=[{\rm diag}(\sigma),0_{n\times (m-n)}]^{\mathbb{T}}\in\mathbb{R}^{m\times n}$ with $\sigma\in \mathbb{R}^m$ satisfying $\sigma_i \geq \sigma_j\,\forall i < j$ and 
\begin{equation}\label{elem_p}
\sigma_i \in(0,\|Z\|],\forall i\in[\text{rank}(Z)], \quad \sigma_i=0,\forall i\notin[\text{rank}(Z)].
\end{equation}
Let $\Sigma^{1/2}\in\mathbb{R}^{m\times n}$ be generated by $\Sigma^{1/2}_{ij}=\sqrt{\Sigma_{ij}}$, $\forall i\in[m],j\in[n]$, $\check{X}=(U\Sigma^{1/2})_{[d]}\in\mathbb{R}^{m\times d}$ and $\check{Y}=(V(\Sigma^{1/2})^\mathbb{T})_{[d]}\in\mathbb{R}^{n\times d}$. It follows from $\text{rank}(Z)\leq d$ that $Z=\check{X}\check{Y}^{\mathbb{T}}$. Moreover, since $U$ and $V$ are orthogonal, $\Sigma$ satisfies (\ref{elem_p}) and $\|Z\|\leq\varsigma^2$, we deduce $(\check{X},\check{Y})\in\Omega^1\times\Omega^2$ and $\text{nnzc}(\check{X})=\text{nnzc}(\check{Y})=\text{rank}(Z)$. Combining with $\text{rank}(XY^{\mathbb{T}})\leq\frac{1}{2}\big(\text{nnzc}(X)+\text{nnzc}(Y)\big),\forall X,Y$, we get $\text{rank}(Z)=\min_{(X,Y)\in\Omega^1\times\Omega^2:XY^{\mathbb{T}}=Z}\frac{1}{2}(\text{nnzc}(X)+\text{nnzc}(Y))$.
\end{proof}

Further, we show some properties of the global minimizers of problem (\ref{FGL0}).
\begin{proposition}\label{FGL0-prps}
Let $(\bar{X},\bar{Y})\in\mathcal{G}_{(\ref{FGL0})}$. The following properties hold for problem (\ref{FGL0}).
\begin{itemize}
\item[(i)] $\mathcal{I}_{\bar{X}}=\mathcal{I}_{\bar{Y}}$ and $\emph{\text{nnzc}}(\bar{X})=\emph{\text{nnzc}}(\bar{Y})=\emph{\text{rank}}(\bar{X}\bar{Y}^{\mathbb{T}})$.
\item[(ii)] $\{(\gamma \bar{X},\frac{1}{\gamma}\bar{Y}):\gamma\neq 0\}\subseteq\mathcal{G}_{(\ref{FGL0})}$.
\item[(iii)] If $\|\bar{X}\bar{Y}^{\mathbb{T}}\|\leq\varsigma^2$, then there exists $(\check{X},\check{Y})\in\mathcal{G}_{(\ref{FGL0})}$ such that $\check{X}\check{Y}^{\mathbb{T}}=\bar{X}\bar{Y}^{\mathbb{T}}$ and $(\check{X},\check{Y})\in\Omega^1\times\Omega^2$.
\end{itemize}
\end{proposition}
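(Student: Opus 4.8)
The plan is to prove the three items in order, each by a small perturbation of the factors that preserves the product $\bar X\bar Y^{\mathbb{T}}$ (hence the data‑fidelity value $f(\bar X\bar Y^{\mathbb{T}})$) while strictly lowering the column‑sparsity penalty, thereby contradicting global optimality. For the first half of (i), I would argue $\mathcal I_{\bar X}=\mathcal I_{\bar Y}$ by contradiction. If there were an index $i\in\mathcal I_{\bar X}\setminus\mathcal I_{\bar Y}$ (the reverse inclusion is symmetric), then $\bar Y_i=\bm 0$, so zeroing out the $i$‑th column of $\bar X$ leaves $\bar X\bar Y^{\mathbb{T}}=\sum_{j}\bar X_j\bar Y_j^{\mathbb{T}}$ unchanged (the $i$‑th summand is already $\bm 0$) but decreases $\text{nnzc}(\bar X)$ by one, so $F_0$ drops by $\lambda>0$, contradicting $(\bar X,\bar Y)\in\mathcal G_{(\ref{FGL0})}$. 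Hence $\mathcal I_{\bar X}=\mathcal I_{\bar Y}$ and in particular $\text{nnzc}(\bar X)=\text{nnzc}(\bar Y)=|\mathcal I_{\bar X}|$.

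For the rank equality in (i), one direction is automatic: $\text{rank}(\bar X\bar Y^{\mathbb{T}})\le\min\{\text{nnzc}(\bar X),\text{nnzc}(\bar Y)\}=\text{nnzc}(\bar X)$. For the reverse, set $Z:=\bar X\bar Y^{\mathbb{T}}$ and note $\text{rank}(Z)\le d$ since $\bar X\in\mathbb{R}^{m\times d}$. If $\text{rank}(Z)<\text{nnzc}(\bar X)$, apply the factored identity (\ref{rank-rel}): a minimizer of $(\text{nnzc}(X)+\text{nnzc}(Y))/2$ over $\{XY^{\mathbb{T}}=Z\}$ necessarily has $\text{nnzc}(X')=\text{nnzc}(Y')=\text{rank}(Z)$ (each factor's nnzc is at least $\text{rank}(Z)$), and it lies in $\mathbb{R}^{m\times d}\times\mathbb{R}^{n\times d}$. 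Then $F_0(X',Y')=f(Z)+2\lambda\,\text{rank}(Z)<f(Z)+\lambda(\text{nnzc}(\bar X)+\text{nnzc}(\bar Y))=F_0(\bar X,\bar Y)$, again a contradiction; so $\text{rank}(Z)=\text{nnzc}(\bar X)=\text{nnzc}(\bar Y)$.

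Item (ii) is immediate: for $\gamma\ne 0$ we have $(\gamma\bar X)(\gamma^{-1}\bar Y)^{\mathbb{T}}=\bar X\bar Y^{\mathbb{T}}$, and multiplying a column by a nonzero scalar does not change whether it vanishes, so $\text{nnzc}(\gamma\bar X)=\text{nnzc}(\bar X)$ and $\text{nnzc}(\gamma^{-1}\bar Y)=\text{nnzc}(\bar Y)$; hence $F_0(\gamma\bar X,\gamma^{-1}\bar Y)=F_0(\bar X,\bar Y)$, the global minimum. For (iii), put $Z:=\bar X\bar Y^{\mathbb{T}}$; since $\text{rank}(Z)\le d$ and $\|Z\|\le\varsigma^2$ by hypothesis, Proposition \ref{rank_const_trans} yields $(\check X,\check Y)\in\Omega^1\times\Omega^2$ with $\check X\check Y^{\mathbb{T}}=Z$ and $\text{nnzc}(\check X)=\text{nnzc}(\check Y)=\text{rank}(Z)$. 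Using part (i), $\text{nnzc}(\check X)+\text{nnzc}(\check Y)=2\,\text{rank}(Z)=\text{nnzc}(\bar X)+\text{nnzc}(\bar Y)$, so $F_0(\check X,\check Y)=f(Z)+\lambda\big(\text{nnzc}(\check X)+\text{nnzc}(\check Y)\big)=F_0(\bar X,\bar Y)$, the optimal value, and therefore $(\check X,\check Y)\in\mathcal G_{(\ref{FGL0})}$.

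None of the steps is deep; the arguments are all of the type ``modify the factors without changing the product.'' The one point that needs genuine care — and is the only place the ambient dimensions matter — is the appeal to (\ref{rank-rel}) in (i) and to Proposition \ref{rank_const_trans} in (iii): one must check $\text{rank}(\bar X\bar Y^{\mathbb{T}})\le d$ so that the low‑rank re‑factorization can be realized inside $\mathbb{R}^{m\times d}\times\mathbb{R}^{n\times d}$ (by padding with zero columns). Everything else is bookkeeping with $\text{nnzc}$ and $\text{rank}$.
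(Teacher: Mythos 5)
Your proposal is correct and follows essentially the same route as the paper's proof: a contradiction via zeroing unmatched columns for $\mathcal I_{\bar X}=\mathcal I_{\bar Y}$, an appeal to the factored rank identity (\ref{rank-rel}) together with global minimality for the rank equality, the trivial scaling argument for (ii), and Proposition \ref{rank_const_trans} combined with part (i) for (iii). The only cosmetic difference is that you phrase the rank equality as a proof by contradiction while the paper argues directly that global minimality forces $\text{nnzc}(\bar X)+\text{nnzc}(\bar Y)$ to attain the minimum over all factorizations of $\bar X\bar Y^{\mathbb{T}}$; the substance is identical.
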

\begin{proof}
(i) We prove this statement by contradiction. Suppose $\mathcal{I}_{\bar{X}}\neq\mathcal{I}_{\bar{Y}}$. Let $\bar{\mathcal{I}}=\mathcal{I}_{\bar{X}}\cap\mathcal{I}_{\bar{Y}}$, and $(\tilde{X},\tilde{Y})\in\mathbb{R}^{m\times d}\times\mathbb{R}^{n\times d}$ satisfy $\tilde{X}_{\bar{\mathcal{I}}}=\bar{X}_{\bar{\mathcal{I}}},\tilde{X}_{\bar{\mathcal{I}}^c}=\bm{0}$ and $\tilde{Y}_{\bar{\mathcal{I}}}=\bar{Y}_{\bar{\mathcal{I}}},\tilde{Y}_{\bar{\mathcal{I}}^c}=\bm{0}$. It follows that $\bar{X}\bar{Y}^{\mathbb{T}}=\tilde{X}\tilde{Y}^{\mathbb{T}}$, $\bar{\mathcal{I}}=\mathcal{I}_{\tilde{X}}=\mathcal{I}_{\tilde{Y}}$ and $\text{nnzc}(\tilde{X})+\text{nnzc}(\tilde{Y})<\text{nnzc}(\bar{X})+\text{nnzc}(\bar{Y})$. Hence, $F_0(\tilde{X},\tilde{Y})<F_0(\bar{X},\bar{Y})$, which contradicts $(\bar{X},\bar{Y})\in\mathcal{G}_{(\ref{FGL0})}$. Therefore, $\mathcal{I}_{\bar{X}}=\mathcal{I}_{\bar{Y}}$.
	
Let $\Gamma = \{ (X,Y) : XY^{\mathbb{T}}=\bar{X}\bar{Y}^{\mathbb{T}}\}$. Due to $(\bar{X},\bar{Y})\in\mathcal{G}_{(\ref{FGL0})}$, we have $F_0(\bar{X},\bar{Y})\leq\min_{(X,Y) \in \Gamma}F_0(X,Y)$, which implies that $\text{nnzc}(\bar{X})+\text{nnzc}(\bar{Y})=\min_{(X,Y)\in \Gamma}\big\{\text{nnzc}(X)+\text{nnzc}(Y)\big\}$. By $\mathcal{I}_{\bar{X}}=\mathcal{I}_{\bar{Y}}$, we get $\text{nnzc}(\bar{X})=\text{nnzc}(\bar{Y})=\frac{1}{2}\min_{(X,Y)\in \Gamma}\big\{\text{nnzc}(X)+\text{nnzc}(Y)\big\}$. Further, by (\ref{rank-rel}), we have $\text{nnzc}(\bar{X})=\text{nnzc}(\bar{Y})=\text{rank}(\bar{X}\bar{Y}^{\mathbb{T}})$.
	
(ii) In view of $\gamma\bar{X}\frac{1}{\gamma}\bar{Y}^{\mathbb{T}}=\bar{X}\bar{Y}^{\mathbb{T}}$, $\mathcal{I}_{\bar{X}}=\mathcal{I}_{\gamma\bar{X}}$ and $\mathcal{I}_{\bar{Y}}=\mathcal{I}_{\frac{1}{\gamma}\bar{Y}}$ for any $\gamma\neq0$, we deduce that $(\gamma\bar{X},\frac{1}{\gamma}\bar{Y})\in\mathcal{G}_{(\ref{FGL0})},\forall\gamma\neq 0$ by $(\bar{X},\bar{Y})\in\mathcal{G}_{(\ref{FGL0})}$.
	
(iii) Considering $\text{rank}(\bar{X}\bar{Y}^{\mathbb{T}})\leq d$ and $\|\bar{X}\bar{Y}^{\mathbb{T}}\|\leq\varsigma^2$, by Proposition \ref{rank_const_trans}, we obtain that there exists $(\check{X},\check{Y})\in\Omega^1\times\Omega^2$ satisfying $\check{X}\check{Y}^{\mathbb{T}}=\bar{X}\bar{Y}^{\mathbb{T}}$ and $\text{nnzc}(\check{X})=\text{nnzc}(\check{Y})=\text{rank}(\bar{X}\bar{Y}^{\mathbb{T}})$. Combining with (i), it holds that $\text{nnzc}(\check{X})=\text{nnzc}(\check{Y})=\text{nnzc}(\bar{X})=\text{nnzc}(\bar{Y})$. Then, by $\check{X}\check{Y}^{\mathbb{T}}=\bar{X}\bar{Y}^{\mathbb{T}}$ and $(\bar{X},\bar{Y})\in\mathcal{G}_{(\ref{FGL0})}$, we deduce that $(\check{X},\check{Y})\in\mathcal{G}_{(\ref{FGL0})}$.
\end{proof}

Although problem (\ref{LRMPR}) and problem (\ref{FGL0}) own the same optimal value, it is worth noting from Proposition \ref{FGL0-prps} (ii) that problem (\ref{FGL0}) has uncountable number of global minimizers, even if problem (\ref{LRMPR}) has a unique global minimizer. This motivates us to consider the bound-constrained problem (\ref{FGL0C}) and prove that the desired global miminizer $Z^\star$ of (\ref{LRMPR}) with $\|Z^\star\|\leq\varsigma^2$ corresponds to a global minimizer of problem (\ref{FGL0C}). Next, we analyze the relationships on the global minimizers between problems (\ref{LRMPR}), (\ref{FGL0}) and (\ref{FGL0C}).

\begin{proposition}\label{bd-rela}
Denote $\mathcal{G}_{(\ref{LRMPR})}^\star:=\{Z\in\mathcal{G}_{(\ref{LRMPR})}:\|Z\|\leq\varsigma^2\}$. The following statements hold.
\begin{itemize}
\item[(i)] $\{XY^{\mathbb{T}}:(X,Y)\in\mathcal{G}_{(\ref{FGL0})}\}=\mathcal{G}_{(\ref{LRMPR})}$.
\item[(ii)] If $Z^\star\in\mathcal{G}_{(\ref{LRMPR})}^\star$, then there exists $(X^\star,Y^\star)\in\mathcal{G}_{(\ref{FGL0})}$ such that $X^\star {Y^\star}^{\mathbb{T}}=Z^\star$ and $(X^\star,Y^\star)\in\Omega^1\times\Omega^2$.
\item[(iii)] If $\mathcal{G}_{(\ref{LRMPR})}^\star\neq\emptyset$, then $\mathcal{G}_{(\ref{FGL0C})}\subseteq\mathcal{G}_{(\ref{FGL0})}$ and $\mathcal{G}_{(\ref{LRMPR})}^\star\subseteq\{XY^{\mathbb{T}}:(X,Y)\in\mathcal{G}_{(\ref{FGL0C})}\}\subseteq\mathcal{G}_{(\ref{LRMPR})}$. Further, $\{XY^{\mathbb{T}}:(X,Y)\in\mathcal{G}_{(\ref{FGL0C})}\}=\mathcal{G}_{(\ref{LRMPR})}$ when $\mathcal{G}_{(\ref{LRMPR})}^\star=\mathcal{G}_{(\ref{LRMPR})}$.
\end{itemize}
\end{proposition}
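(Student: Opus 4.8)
The plan is to reduce the statement to the equivalences (E1) and (E2) recorded in the Introduction, together with the bounded balanced factorization furnished by Proposition \ref{rank_const_trans}, and then to compare the optimal values of the three models. For (i), one inclusion is immediate from (E1): $XY^{\mathbb{T}}\in\mathcal{G}_{(\ref{LRMPR})}$ whenever $(X,Y)\in\mathcal{G}_{(\ref{FGL0})}$. Conversely, given $Z\in\mathcal{G}_{(\ref{LRMPR})}$ one has $\text{rank}(Z)\leq d$ by feasibility for (\ref{LRMPR}), so the SVD-based factorization used in the proof of Proposition \ref{rank_const_trans} (now without the norm bound) produces $(X,Y)$ with $XY^{\mathbb{T}}=Z$ and $\text{nnzc}(X)=\text{nnzc}(Y)=\text{rank}(Z)$; then (E2) places $(X,Y)$ in $\mathcal{G}_{(\ref{FGL0})}$, so $Z\in\{XY^{\mathbb{T}}:(X,Y)\in\mathcal{G}_{(\ref{FGL0})}\}$. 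For (ii), given $Z^\star\in\mathcal{G}_{(\ref{LRMPR})}^\star$ one has $\text{rank}(Z^\star)\leq d$ and $\|Z^\star\|\leq\varsigma^2$, so Proposition \ref{rank_const_trans} supplies $(X^\star,Y^\star)\in\Omega^1\times\Omega^2$ with $X^\star(Y^\star)^{\mathbb{T}}=Z^\star$ and $\text{nnzc}(X^\star)=\text{nnzc}(Y^\star)=\text{rank}(Z^\star)$, and (E2) yields $(X^\star,Y^\star)\in\mathcal{G}_{(\ref{FGL0})}$.

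For (iii), assume $\mathcal{G}_{(\ref{LRMPR})}^\star\neq\emptyset$. The key step, and the only place the hypothesis enters, is that (\ref{FGL0C}) and (\ref{FGL0}) share the same optimal value: the optimal value of (\ref{FGL0C}) is no smaller than that of (\ref{FGL0}) since it minimizes $F_0$ over a subset of the feasible set of (\ref{FGL0}), while it is no larger because, by (ii), for some $Z^\star\in\mathcal{G}_{(\ref{LRMPR})}^\star$ there is $(X^\star,Y^\star)\in\mathcal{G}_{(\ref{FGL0})}\cap(\Omega^1\times\Omega^2)$, a feasible point of (\ref{FGL0C}) whose value $F_0(X^\star,Y^\star)$ equals the optimal value of (\ref{FGL0}). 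With this common optimal value in hand, every $(X,Y)\in\mathcal{G}_{(\ref{FGL0C})}$ is feasible for (\ref{FGL0}) and attains that value, hence lies in $\mathcal{G}_{(\ref{FGL0})}$; this gives $\mathcal{G}_{(\ref{FGL0C})}\subseteq\mathcal{G}_{(\ref{FGL0})}$, and combining with (E1) gives $\{XY^{\mathbb{T}}:(X,Y)\in\mathcal{G}_{(\ref{FGL0C})}\}\subseteq\mathcal{G}_{(\ref{LRMPR})}$. Moreover, for any $Z^\star\in\mathcal{G}_{(\ref{LRMPR})}^\star$ the pair $(X^\star,Y^\star)$ produced by (ii) is feasible for (\ref{FGL0C}) and attains the optimal value, so $(X^\star,Y^\star)\in\mathcal{G}_{(\ref{FGL0C})}$ and $Z^\star\in\{XY^{\mathbb{T}}:(X,Y)\in\mathcal{G}_{(\ref{FGL0C})}\}$, which gives $\mathcal{G}_{(\ref{LRMPR})}^\star\subseteq\{XY^{\mathbb{T}}:(X,Y)\in\mathcal{G}_{(\ref{FGL0C})}\}$. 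When $\mathcal{G}_{(\ref{LRMPR})}^\star=\mathcal{G}_{(\ref{LRMPR})}$, the two ends of the chain $\mathcal{G}_{(\ref{LRMPR})}^\star\subseteq\{XY^{\mathbb{T}}:(X,Y)\in\mathcal{G}_{(\ref{FGL0C})}\}\subseteq\mathcal{G}_{(\ref{LRMPR})}$ coincide, forcing equality throughout.

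I do not expect a genuine obstacle; the delicate point is the two-sided optimal-value comparison in (iii) --- one must argue that imposing the bound constraint neither lowers the optimal value (it only shrinks the feasible set) nor, by (ii), raises it --- since it is precisely this equality that keeps (\ref{FGL0C}) from discarding or manufacturing global minimizers. A minor recurring item is to keep the constraint $\text{rank}(Z)\leq d$ in view whenever Proposition \ref{rank_const_trans} or the SVD factorization is invoked.
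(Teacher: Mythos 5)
Your proposal is correct and follows essentially the same route as the paper: (i) via (E1)/(E2), (ii) via the bounded balanced factorization, and (iii) via the two-sided optimal-value comparison between (\ref{FGL0}) and (\ref{FGL0C}). The only cosmetic difference is in (ii), where you invoke Proposition \ref{rank_const_trans} directly instead of passing through Proposition \ref{FGL0-prps}(iii), but the latter is itself proved from the former, so the arguments coincide.
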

\begin{proof}
(i) In view of the relationships in (E1) and (E2), we get $\{XY^{\mathbb{T}}:(X,Y)\in\mathcal{G}_{(\ref{FGL0})}\}=\mathcal{G}_{(\ref{LRMPR})}$.
	
(ii) Let $Z^\star\in\mathcal{G}_{(\ref{LRMPR})}^\star$. By (E2), for any $(\bar{X},\bar{Y})\in\mathbb{R}^{m\times d}\times\mathbb{R}^{n\times d}$ with $\bar{X}\bar{Y}^{\mathbb{T}}=Z^\star$ and ${\text{nnzc}}(\bar{X})={\text{nnzc}}(\bar{Y})=\text{rank}(Z^\star)$, we have $(\bar{X},\bar{Y})\in\mathcal{G}_{(\ref{FGL0})}$ and $\|\bar{X}\bar{Y}^{\mathbb{T}}\|\leq\varsigma^2$.
Further, by Proposition \ref{FGL0-prps} (iii), the results hold.
	
(iii) Due to $\mathcal{G}_{(\ref{LRMPR})}^\star\neq\emptyset$, we arbitrarily choose $Z^\star\in\mathcal{G}_{(\ref{LRMPR})}^\star$. By (ii), there exists $(X^\star,Y^\star)\in\mathcal{G}_{(\ref{FGL0})}$ such that $X^\star {Y^\star}^{\mathbb{T}}=Z^\star$ and $(X^\star,Y^\star)\in\Omega^1\times\Omega^2$. This implies $(X^\star,Y^\star)\in\mathcal{G}_{(\ref{FGL0C})}$ and
\begin{equation*}
\min_{(X,Y)\in\mathbb{R}^{m\times d}\times\mathbb{R}^{n\times d}}F_0(X,Y)=\min_{(X,Y)\in\Omega^1\times\Omega^2}F_0(X,Y).
\end{equation*}
Then, we deduce $\mathcal{G}_{(\ref{FGL0C})}\subseteq\mathcal{G}_{(\ref{FGL0})}$. Further, combining with (i), we obtain $Z^\star=X^\star {Y^\star}^{\mathbb{T}}\in\{XY^{\mathbb{T}}:(X,Y)\in\mathcal{G}_{(\ref{FGL0C})}\}\subseteq\mathcal{G}_{(\ref{LRMPR})}$. Therefore, the statements in (iii) hold.
\end{proof}

It is reasonable to expect that there exists at least a global minimizer of problem (\ref{LRMPR}) satisfying the upper bound $\varsigma^2$ of the target matrix $Z^\diamond$ when $\varsigma$ is taken to be sufficiently large. However,
if all global minimizers of (\ref{LRMPR}) do not satisfy the upper bound $\varsigma^2$, then
it is not meaningful to solve (\ref{LRMPR}) for the recovery or estimation of $Z^\diamond$. Note that if the bound $\varsigma^2$ holds for all global minimizers of (\ref{LRMPR}), then $\mathcal{G}_{(\ref{LRMPR})}=\{XY^\mathbb{T}:(X,Y)\in\mathcal{G}_{(\ref{FGL0})}\}=\{XY^\mathbb{T}:(X,Y)\in\mathcal{G}_{(\ref{FGL0C})}\}$.

In what follows, we give some necessary and sufficient conditions for the local minimizers of problem (\ref{LRMPR}), and show the inclusion relation for the local minimizers between problems (\ref{LRMPR}) and (\ref{FGL0}).

\begin{proposition}\label{lcm-rela}
$\bar{Z}\in\mathcal{L}_{(\ref{LRMPR})}$ iff it is a local minimizer of $f$ on $\{Z\in\mathbb{R}^{m\times n}:\emph{\text{rank}}(Z)\leq d\}\cap\Omega_{\bar{Z}}\,\big(\Omega_{\bar{Z}}:=\{Z\in\mathbb{R}^{m\times n}:\emph{\text{rank}}(Z)=\emph{\text{rank}}(\bar{Z})\}\big)$, which is also equivalent to that it is a local minimizer of $f$ on $\{Z\in\mathbb{R}^{m\times n}:\emph{\text{rank}}(Z)\leq d\}\cap\{Z\in\mathbb{R}^{m\times n}:\emph{\text{rank}}(Z)\leq\emph{\text{rank}}(\bar{Z})\}$. Moreover, for any $Z\in\mathcal{L}_{(\ref{LRMPR})}$, there exists $(X,Y)\in\mathcal{L}_{(\ref{FGL0})}$ satisfying $XY^{\mathbb{T}}=Z$ and $\emph{\text{nnzc}}(X)=\emph{\text{nnzc}}(Y)=\emph{\text{rank}}(Z)$.
\end{proposition}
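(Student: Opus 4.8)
The statement packages three claims: (a) $\bar Z\in\mathcal{L}_{(\ref{LRMPR})}$ iff $\bar Z$ is a local minimizer of $f$ on $\{Z:\text{rank}(Z)\le d\}\cap\Omega_{\bar Z}$; (b) this is equivalent to $\bar Z$ being a local minimizer of $f$ on $\{Z:\text{rank}(Z)\le d\}\cap\{Z:\text{rank}(Z)\le\text{rank}(\bar Z)\}$; and (c) every $Z\in\mathcal{L}_{(\ref{LRMPR})}$ lifts to some $(X,Y)\in\mathcal{L}_{(\ref{FGL0})}$ with $XY^{\mathbb{T}}=Z$ and $\text{nnzc}(X)=\text{nnzc}(Y)=\text{rank}(Z)$. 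The common device is the lower semicontinuity of $\text{rank}(\cdot)$: on a small enough $B_\varepsilon(\bar Z)$ one has $\text{rank}(W)\ge\text{rank}(\bar Z)$ for all $W$, so the sets $\{\text{rank}=\text{rank}(\bar Z)\}$ and $\{\text{rank}\le\text{rank}(\bar Z)\}$ coincide near $\bar Z$; this makes (b) immediate and reduces the rest to controlling the integer jump in the $\lambda\,\text{rank}$ term.

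For (a), the ``only if'' is immediate: on $\Omega_{\bar Z}$ the regularizer $\lambda\,\text{rank}$ is the constant $\lambda\,\text{rank}(\bar Z)$, so restricting the local optimality of $\frac12 f+\lambda\,\text{rank}$ over $\{\text{rank}\le d\}$ to $\Omega_{\bar Z}$ leaves exactly local optimality of $f$ there. For ``if'', I would intersect three neighborhoods of $\bar Z$: one on which the hypothesized bound $f(\bar Z)\le f(W)$ holds over $\Omega_{\bar Z}$, one (lower semicontinuity of rank) on which $\text{rank}(W)\ge\text{rank}(\bar Z)$, and one (continuity of $f$, valid since $f$ is locally Lipschitz) on which $f(\bar Z)-f(W)<2\lambda$. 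On the intersection I split into $\text{rank}(W)=\text{rank}(\bar Z)$, where the target inequality is the hypothesis, and $\text{rank}(W)\ge\text{rank}(\bar Z)+1$, where the surplus $\lambda$ in $\lambda\,\text{rank}$ beats the decrease of $\frac12 f$ (which is strictly below $\lambda$); either way $\frac12 f(W)+\lambda\,\text{rank}(W)\ge\frac12 f(\bar Z)+\lambda\,\text{rank}(\bar Z)$, i.e. $\bar Z\in\mathcal{L}_{(\ref{LRMPR})}$. Positivity of $\lambda$ is essential here.

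For (c), set $r_0:=\text{rank}(Z)\le d$ and take the rank-$r_0$ factorization of $Z$ given by the SVD construction in the proof of Proposition \ref{rank_const_trans} (the membership in $\Omega^1\times\Omega^2$ is the only part there needing the $\varsigma$-bound, which I do not use), producing $(X,Y)$ with $XY^{\mathbb{T}}=Z$, $\text{nnzc}(X)=\text{nnzc}(Y)=r_0$, hence $F_0(X,Y)=f(Z)+2\lambda r_0$. To show $(X,Y)\in\mathcal{L}_{(\ref{FGL0})}$ I would choose $\delta>0$ so small that for $(X',Y')\in B_\delta(X,Y)$: (i) $X'Y'^{\mathbb{T}}$ lies in the ball around $Z$ furnished by $Z\in\mathcal{L}_{(\ref{LRMPR})}$ (continuity of the bilinear product), (ii) $|f(X'Y'^{\mathbb{T}})-f(Z)|<\lambda$ (continuity of $f$), and (iii) $\text{nnzc}(X')\ge r_0$ and $\text{nnzc}(Y')\ge r_0$ (lower semicontinuity of $\text{nnzc}$). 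With $W:=X'Y'^{\mathbb{T}}$ (always $\text{rank}(W)\le d$ since $X'\in\mathbb{R}^{m\times d}$) I split on $\text{nnzc}(X')+\text{nnzc}(Y')$: if $\ge 2r_0+1$, the extra $\lambda$ in $F_0$ dominates the $<\lambda$ variation of $f$; if $=2r_0$, then necessarily $\text{nnzc}(X')=\text{nnzc}(Y')=r_0$, whence $\text{rank}(W)\le r_0$, and the local optimality of $Z$ for (\ref{LRMPR}) yields $\frac12 f(Z)+\lambda r_0\le\frac12 f(W)+\lambda\,\text{rank}(W)\le\frac12 f(W)+\lambda r_0$, i.e. $f(Z)\le f(W)$. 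In both cases $F_0(X',Y')\ge f(Z)+2\lambda r_0=F_0(X,Y)$.

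The main obstacle is the second branch of (c): one must observe that when the column-supports of $(X',Y')$ do not grow, $W=X'Y'^{\mathbb{T}}$ is forced into $\{\text{rank}\le r_0\}$, which is precisely what allows the genuine rank-regularized local optimality of $Z$ (not merely continuity of $f$) to be invoked; the remaining effort is routine bookkeeping --- keeping the $\tfrac12$-scaling between (\ref{LRMPR}) and (\ref{FGL0}) straight and tracking which shrinking neighborhood controls the product location, the value of $f$, and the column-support size, respectively.
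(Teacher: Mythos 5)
Your proof is correct. Parts (a) and (b) follow the paper's argument essentially verbatim: lower semicontinuity of the rank makes $\{Z:\text{rank}(Z)=\text{rank}(\bar Z)\}$ and $\{Z:\text{rank}(Z)\le\text{rank}(\bar Z)\}$ coincide on a small ball around $\bar Z$, the ``only if'' direction is the constancy of the regularizer on $\Omega_{\bar Z}$, and the ``if'' direction combines the hypothesis on $\Omega_{\bar Z}$ with the continuity bound $f(\bar Z)-f(W)<2\lambda$ to absorb the integer jump in $\lambda\,\text{rank}$ off $\Omega_{\bar Z}$ --- exactly the paper's two-case argument. For part (c) you take a slightly longer route: after fixing the rank-$r_0$ factorization and shrinking $\delta$ so that $X'(Y')^{\mathbb{T}}$ lands in the ball of local optimality of $Z$, the paper finishes in a single chain, namely $F_0(X',Y')\ge f(X'(Y')^{\mathbb{T}})+2\lambda\,\text{rank}(X'(Y')^{\mathbb{T}})\ge f(Z)+2\lambda\,\text{rank}(Z)=F_0(X,Y)$, using the uniform inequality $2\,\text{rank}(X'(Y')^{\mathbb{T}})\le\text{nnzc}(X')+\text{nnzc}(Y')$ from (\ref{rank-rel}) followed by the rank-constrained local optimality of $Z$; this needs neither your continuity estimate $|f(X'(Y')^{\mathbb{T}})-f(Z)|<\lambda$ nor the lower semicontinuity of $\text{nnzc}$ nor the case split on the column supports. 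Your two-branch argument is nonetheless valid --- in particular your key observation in the second branch, that $\text{nnzc}(X')=\text{nnzc}(Y')=r_0$ forces $\text{rank}(X'(Y')^{\mathbb{T}})\le r_0$ and thereby lets the genuine local optimality of $Z$ for (\ref{LRMPR}) be invoked, is exactly the mechanism the paper's one-line inequality encodes --- so the two proofs differ only in that the paper's inequality subsumes both of your cases at once.
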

\begin{proof}
For a given $\bar{Z}\in\mathbb{R}^{m\times n}$, there exists an $\varepsilon>0$ such that $\text{rank}(Z)\geq\text{rank}(\bar{Z}),\forall Z\in B_\varepsilon(\bar{Z})$, which means $\{Z\in B_\varepsilon(\bar{Z}):\text{rank}(Z)\leq\text{rank}(\bar{Z})\}=\{Z\in B_\varepsilon(\bar{Z}):\text{rank}(Z)=\text{rank}(\bar{Z})\}$. Therefore, $\bar{Z}$ is a local minimizer of $f$ on $\{Z\in\mathbb{R}^{m\times n}:\text{rank}(Z)\leq\text{rank}(\bar{Z})\}$ \emph{iff} it is a local minimizer of $f$ on $\Omega_{\bar{Z}}$.
	
When $\bar{Z}$ is a local minimizer of $f$ on $\{Z:{\text{rank}}(Z)\leq d\}\cap\Omega_{\bar{Z}}$, there exists an $\hat\varepsilon>0$ such that $f(\bar{Z})+2\lambda\,\text{rank}(\bar{Z})\leq f(Z)+2\lambda\,\text{rank}(Z),\forall Z\in B_{\hat\varepsilon}(\bar{Z})\cap\{Z:{\text{rank}}(Z)\leq d\}\cap\Omega_{\bar{Z}}$. 
Moreover, there exists an $\bar\epsilon\in(0,\min\{\hat\varepsilon,\varepsilon\}]$ 
such that $f(\bar{Z})\leq f(Z)+2\lambda,\forall Z\in B_{\bar\epsilon}(\bar{Z})$ by the continuity of $f$, which implies $f(\bar{Z})+2\lambda\,\text{rank}(\bar{Z})\leq f(Z)+2\lambda+2\lambda\,\text{rank}(\bar{Z})\leq f(Z)+2\lambda\,\text{rank}(Z),\forall Z\in B_{\bar\epsilon}(\bar{Z})\cap\{Z:{\text{rank}}(Z)\leq d\}\setminus\Omega_{\bar{Z}}$. Thus, $\bar{Z}\in\mathcal{L}_{(\ref{LRMPR})}$.
	
Conversely, suppose $\bar{Z}\in\mathcal{L}_{(\ref{LRMPR})}$. There exists an $\epsilon>0$ such that
\begin{equation}\label{lcmn_ieq}
f(\bar{Z})+2\lambda\,\text{rank}(\bar{Z})\leq f(Z)+2\lambda\,\text{rank}(Z),\quad \forall Z\in B_\epsilon(\bar{Z})\cap\{Z:{\text{rank}}(Z)\leq d\}.
\end{equation}
This implies $f(\bar{Z})\leq f(Z), \forall Z\in B_{\epsilon}(\bar{Z})\cap\{Z:{\text{rank}}(Z)\leq d\}\cap\Omega_{\bar{Z}}$, and hence $\bar{Z}$ is a local minimizer of $f$ on $\{Z:{\text{rank}}(Z)\leq d\}\cap\Omega_{\bar{Z}}$.
	
For the given $\bar{Z}$, it is not hard to construct $(\bar{X},\bar{Y})\in\mathbb{R}^{m\times d}\times\mathbb{R}^{n\times d}$ such that $\text{nnzc}(\bar{X})=\text{nnzc}(\bar{Y})=\text{rank}(\bar{Z})$ and $\bar{X}\bar{Y}^{\mathbb{T}}=\bar{Z}$. Then, by the continuity of $\|XY^{\mathbb{T}}\|$, we infer that there exists an $\epsilon_1>0$ such that $XY^{\mathbb{T}}\in B_{\epsilon}(\bar{Z}),\forall (X,Y)\in B_{\epsilon_1}(\bar{X},\bar{Y})$. Further, by (\ref{rank-rel}), (\ref{lcmn_ieq}) and $\text{rank}(XY^{\mathbb{T}})\leq d,\forall (X,Y)\in\mathbb{R}^{m\times d}\times\mathbb{R}^{n\times d}$, we have $F_0(\bar{X},\bar{Y})=f(\bar{Z})+2\lambda\,\text{rank}(\bar{Z})\leq f(XY^{\mathbb{T}})+2\lambda\,\text{rank}(XY^{\mathbb{T}})\leq F_0(X,Y),\forall (X,Y)\in B_{\epsilon_1}(\bar{X},\bar{Y})$. This means $(\bar{X},\bar{Y})\in\mathcal{L}_{(\ref{FGL0})}$.
\end{proof}
The prior bound ($\|Z^\diamond\|\leq\varsigma^2$) of the target matrix $Z^\diamond$ can be exploited by the constraint $\Omega^1\times\Omega^2$ in problem (\ref{FGL0C}). This makes $Z^\diamond$ belong to $\{XY^{\mathbb{T}}:(X,Y)\in\Omega^1\times\Omega^2\}$ by Proposition \ref{rank_const_trans}. Moreover, based on Proposition \ref{bd-rela} and Proposition \ref{lcm-rela}, problems (\ref{LRMPR}), (\ref{FGL0}) and (\ref{FGL0C}) own the relationships as shown in Fig.\,\ref{probs-rela}. As we can see, the problem (\ref{FGL0C}) can be of much smaller scale than problem (\ref{LRMPR}), and the set of its global minimizers covers all global minimizers of (\ref{LRMPR}) that are bounded by $\varsigma^2$.

\section{Model Analysis on Factorization and Relaxation}\label{section4}
In this section, we define some classes of stationary points to problem (\ref{FGL0C}) and its relaxation problem (\ref{FGL0R}), and then analyze some properties of their global minimizers, local minimizers and  stationary points. In particular, we find two 
consistency properties of the global minimizers of (\ref{FGL0C}) and (\ref{FGL0R}), and incorporate these properties to define their notions of strong stationary points. Further, we establish the equivalence between (\ref{FGL0C}) and (\ref{FGL0R}) in the sense of global minimizers and the defined strong stationary points.

\subsection{Factorization model analysis}
First, we define a property that will be later shown to be possessed by the global minimizers of problems (\ref{FGL0C}) and (\ref{FGL0R}).
\begin{definition}[\bf Column consistency property]\label{col_sons_pr}
We say that a matrix couple $(X,Y)\in\mathbb{R}^{m\times d}\times\mathbb{R}^{n\times d}$ has the column consistency property if $\mathcal{I}_{X}=\mathcal{I}_{Y}$.
\end{definition}
\begin{definition}[\bf Stationary point of (\ref{FGL0C})]\label{Lsta}
We call $(X,Y)\in\Omega^1\times\Omega^2$ as a stationary point of (\ref{FGL0C}) if there exists an $H\in\partial_Zf(XY^{\mathbb{T}})$ such that
\begin{equation}\label{Lsta-incl}
\bm{0}\in HY_{\mathcal{I}}+N_{\Omega^1_{\mathcal{I}}}(X_{\mathcal{I}})\mbox{ and }\bm{0}\in H^{\mathbb{T}}X_{\mathcal{I}}+N_{\Omega^2_{\mathcal{I}}}(Y_{\mathcal{I}})\mbox{ with }\mathcal{I}=\mathcal{I}_{X}\cap\mathcal{I}_{Y}.
\end{equation}
The set of stationary points of problem (\ref{FGL0C}) is denoted as $\mathcal{S}_{(\ref{FGL0C})}$.
\end{definition}
Note that (\ref{Lsta-incl}) is equivalent to $\bm{0}\in HY_{\mathcal{I}_X}+N_{\Omega^1_{\mathcal{I}_X}}(X_{\mathcal{I}_X})$ and $\bm{0}\in H^{\mathbb{T}}X_{\mathcal{I}_Y}+N_{\Omega^2_{\mathcal{I}_Y}}(Y_{\mathcal{I}_Y})$, because of the fact that $Y_{\mathcal{I}_X\setminus\mathcal{I}}=\bm{0}$ and $X_{\mathcal{I}_Y\setminus\mathcal{I}}=\bm{0}$. 

In nonconvex factorized column-sparse regularized models, the loss function $f$ is often assumed to be smooth and the iterates generated by the designed algorithms are often proved to be convergent to the limiting-critical point set of the regularized models \cite{Bolte2014Proximal,Pan2022factor,YangL2018}. In view of \cite[Exercise 10.10, Proposition 10.5]{Rockafellar1998}, \cite[Theorem 6.23]{Penot2013} and $X_{\mathcal{I}}Y_{\mathcal{I}}^{\mathbb{T}}=XY^{\mathbb{T}}$, we have that any limiting-critical point is a stationary point for (\ref{FGL0C}). It is worth noting that when $f$ is smooth, it follows from Proposition \ref{subd-cri-rela} (ii) that the stationary point defined in Definition \ref{Lsta} is equivalent to the limiting-critical point for (\ref{FGL0C}).

In what follows, we show a necessary and sufficient condition for the local minimizers of problem (\ref{FGL0C}), and further analyze some properties on its global minimizers, local minimizers and stationary points, respectively.
\begin{proposition}\label{FGL0C-prps}
For problem (\ref{FGL0C}), the following statements hold.
\begin{itemize}
\item[(i)] For any $(X,Y)\in\mathcal{G}_{(\ref{FGL0C})}$, $(X,Y)$ owns the column consistency property.
\item[(ii)] For any $(X,Y)\in\Omega^1\times\Omega^2$, $(X,Y)\in\mathcal{L}_{(\ref{FGL0C})}$ iff $(X_{\mathcal{I}},Y_{\mathcal{I}})$ is a local solution of
\begin{equation*}
\min_{M,N}\,f(MN^{\mathbb{T}}),~~\emph{s.t.}~(M,N)\in\Omega_{X,Y}:=\{(M,N):M\in\Omega^1_{\mathcal{I}},N\in\Omega^2_{\mathcal{I}}\}
\end{equation*}
with $\mathcal{I}=\mathcal{I}_{X}\cap\mathcal{I}_{Y}$.
Moreover, $(X,Y)\in\mathcal{L}_{(\ref{FGL0C})}$ if $XY^{\mathbb{T}}$ is a local solution of \begin{equation*}
\min_{Z}\,f(Z),~~\emph{s.t.}~Z\in\{MN^{\mathbb{T}}:(M,N)\in\Omega_{X,Y}\}.
\end{equation*}
\item[(iii)] $\mathcal{L}_{(\ref{FGL0C})}\subseteq\mathcal{S}_{(\ref{FGL0C})}$. Moreover, if $(X,Y)\in\mathcal{L}_{(\ref{FGL0C})}$ (or $\mathcal{S}_{(\ref{FGL0C})}$), then $(X^s,Y^s)$ defined by
\begin{equation}\label{rdc-elm}
X^s_{\mathcal{I}}=X_{\mathcal{I}}, \; X^s_{\mathcal{I}^c}=\bm{0}, \; 
Y^s_{\mathcal{I}}=Y_{\mathcal{I}},\; Y^s_{\mathcal{I}^c}=\bm{0}\mbox{ with }\mathcal{I}=\mathcal{I}_{X}\cap\mathcal{I}_{Y},
\end{equation}
satisfies $(X^s,Y^s)\in\mathcal{L}_{(\ref{FGL0C})}$ (or $\mathcal{S}_{(\ref{FGL0C})}$) and $F_0(X^s,Y^s)\leq F_0(X,Y)$.
\item[(iv)] If $(X,Y)\in\mathcal{G}_{(\ref{FGL0C})}$ (or $\mathcal{L}_{(\ref{FGL0C})}$), then $(\gamma X,\frac{1}{\gamma}Y)\in\mathcal{G}_{(\ref{FGL0C})}$ (or $\mathcal{L}_{(\ref{FGL0C})}$) for any $\gamma\neq 0$ satisfying $(\gamma X,\frac{1}{\gamma}Y)\in\Omega^1\times\Omega^2$.
\end{itemize}
\end{proposition}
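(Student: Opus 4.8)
I would prove the four parts in that order, each using the previous, and lean throughout on two elementary facts: (a) if $\mathcal{I}=\mathcal{I}_X\cap\mathcal{I}_Y$ then $XY^{\mathbb{T}}=X_{\mathcal{I}}Y_{\mathcal{I}}^{\mathbb{T}}$ (columns outside $\mathcal{I}$ give a zero rank-one term), and (b) zeroing a column, or scaling it by a nonzero scalar, keeps a point in $\Omega^1\times\Omega^2$ and does not increase $\text{nnzc}$. Parts (i), the ``only if'' half of (ii), the inclusion $\mathcal{L}_{(\ref{FGL0C})}\subseteq\mathcal{S}_{(\ref{FGL0C})}$, and the global case of (iv) are short perturbation/invariance arguments; the ``if'' direction of (ii) and the local case of (iv) are where the column supports and the box constraints interact nontrivially, and the latter is the real obstacle. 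For part (i), argue by contradiction as in Proposition~\ref{FGL0-prps}(i): if $(X,Y)\in\mathcal{G}_{(\ref{FGL0C})}$ had $\mathcal{I}_X\neq\mathcal{I}_Y$, zero out all columns of $X$ and $Y$ outside $\bar{\mathcal{I}}:=\mathcal{I}_X\cap\mathcal{I}_Y$ to obtain $(\tilde X,\tilde Y)\in\Omega^1\times\Omega^2$ with $\tilde X\tilde Y^{\mathbb{T}}=XY^{\mathbb{T}}$ and $\text{nnzc}(\tilde X)+\text{nnzc}(\tilde Y)=2|\bar{\mathcal{I}}|<\text{nnzc}(X)+\text{nnzc}(Y)$, hence $F_0(\tilde X,\tilde Y)<F_0(X,Y)$, a contradiction.

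For part (ii), ``only if'': given $(X,Y)\in\mathcal{L}_{(\ref{FGL0C})}$ with neighbourhood radius $\epsilon_0$, restrict admissible perturbations to the columns indexed by $\mathcal{I}$, i.e.\ for $(M,N)\in\Omega_{X,Y}$ close to $(X_{\mathcal{I}},Y_{\mathcal{I}})$ form $(X',Y')$ by replacing the $\mathcal{I}$-columns of $(X,Y)$ with $(M,N)$; shrinking the radius so that all columns of $M,N$ stay nonzero gives $\text{nnzc}(X')=\text{nnzc}(X)$, $\text{nnzc}(Y')=\text{nnzc}(Y)$, $X'(Y')^{\mathbb{T}}=MN^{\mathbb{T}}$ and $(X',Y')\in\Omega^1\times\Omega^2$, so $F_0(X',Y')\geq F_0(X,Y)$ collapses to $f(MN^{\mathbb{T}})\geq f(X_{\mathcal{I}}Y_{\mathcal{I}}^{\mathbb{T}})$. ``If'': take any feasible $(X',Y')$ near $(X,Y)$; by continuity $\mathcal{I}_X\subseteq\mathcal{I}_{X'}$ and $\mathcal{I}_Y\subseteq\mathcal{I}_{Y'}$. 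If $\text{nnzc}(X')+\text{nnzc}(Y')=\text{nnzc}(X)+\text{nnzc}(Y)$ then $\mathcal{I}_{X'}=\mathcal{I}_X$, $\mathcal{I}_{Y'}=\mathcal{I}_Y$, so $\mathcal{I}_{X'}\cap\mathcal{I}_{Y'}=\mathcal{I}$, $X'(Y')^{\mathbb{T}}=X'_{\mathcal{I}}(Y'_{\mathcal{I}})^{\mathbb{T}}$ with $(X'_{\mathcal{I}},Y'_{\mathcal{I}})\in\Omega_{X,Y}$ close to $(X_{\mathcal{I}},Y_{\mathcal{I}})$, and the assumed local optimality of $(X_{\mathcal{I}},Y_{\mathcal{I}})$ yields $F_0(X',Y')\geq F_0(X,Y)$; otherwise the $\text{nnzc}$-sum jumps by at least one, and shrinking the radius so that $f(X'(Y')^{\mathbb{T}})\geq f(XY^{\mathbb{T}})-\lambda$ (continuity of $f$ and of $(X',Y')\mapsto X'(Y')^{\mathbb{T}}$) again gives $F_0(X',Y')\geq F_0(X,Y)$. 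The ``moreover'' clause follows by composing with the continuous map $(M,N)\mapsto MN^{\mathbb{T}}$: a local solution in the $Z$-variable forces $(X_{\mathcal{I}},Y_{\mathcal{I}})$ to be a local solution in the $(M,N)$-variable, so the equivalence just proved applies.

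For part (iii), $\mathcal{L}_{(\ref{FGL0C})}\subseteq\mathcal{S}_{(\ref{FGL0C})}$ follows by chaining Fermat's rule (a local minimizer is a limiting-critical point of (\ref{FGL0C})) with the fact recalled before the proposition that every limiting-critical point of (\ref{FGL0C}) is a stationary point. For the reduction to $(X^s,Y^s)$, note $(X^s,Y^s)\in\Omega^1\times\Omega^2$, $X^s(Y^s)^{\mathbb{T}}=XY^{\mathbb{T}}$ and $\text{nnzc}(X^s)=\text{nnzc}(Y^s)=|\mathcal{I}|\leq\min\{\text{nnzc}(X),\text{nnzc}(Y)\}$, which gives $F_0(X^s,Y^s)\leq F_0(X,Y)$ at once; moreover $\mathcal{I}_{X^s}=\mathcal{I}_{Y^s}=\mathcal{I}$, $X^s_{\mathcal{I}}=X_{\mathcal{I}}$, $Y^s_{\mathcal{I}}=Y_{\mathcal{I}}$ and $\Omega_{X^s,Y^s}=\Omega_{X,Y}$, so if $(X,Y)\in\mathcal{L}_{(\ref{FGL0C})}$ then part (ii) makes $(X_{\mathcal{I}},Y_{\mathcal{I}})$ a local solution of the $\Omega_{X,Y}$-restricted problem and part (ii) again puts $(X^s,Y^s)\in\mathcal{L}_{(\ref{FGL0C})}$, while if $(X,Y)\in\mathcal{S}_{(\ref{FGL0C})}$ with witness $H\in\partial_Zf(XY^{\mathbb{T}})=\partial_Zf(X^s(Y^s)^{\mathbb{T}})$, the same $H$ and the same set $\mathcal{I}$ satisfy (\ref{Lsta-incl}) for $(X^s,Y^s)$ since that inclusion involves only $H$, $X_{\mathcal{I}}$ and $Y_{\mathcal{I}}$. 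The global case of part (iv) is then immediate: $(\gamma X)(\tfrac1\gamma Y)^{\mathbb{T}}=XY^{\mathbb{T}}$ and $\mathcal{I}_{\gamma X}=\mathcal{I}_X$, $\mathcal{I}_{\frac1\gamma Y}=\mathcal{I}_Y$, so $F_0(\gamma X,\tfrac1\gamma Y)=F_0(X,Y)$ equals the optimal value while $(\gamma X,\tfrac1\gamma Y)$ is feasible.

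For the local case of part (iv), again invoke part (ii): since $\mathcal{I}_{\gamma X}\cap\mathcal{I}_{\frac1\gamma Y}=\mathcal{I}$, $(\gamma X)_{\mathcal{I}}=\gamma X_{\mathcal{I}}$, $(\tfrac1\gamma Y)_{\mathcal{I}}=\tfrac1\gamma Y_{\mathcal{I}}$ and $\Omega_{\gamma X,\frac1\gamma Y}=\Omega_{X,Y}$, it suffices to show that if $(A,B)$ is a local solution of $\min_{(M,N)\in\Omega_{X,Y}}f(MN^{\mathbb{T}})$ and $(\gamma A,\tfrac1\gamma B)\in\Omega_{X,Y}$, then $(\gamma A,\tfrac1\gamma B)$ is a local solution too. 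This is the main obstacle: for a feasible $(M',N')$ near $(\gamma A,\tfrac1\gamma B)$ the naive rescaling $(\tfrac1\gamma M',\gamma N')$ has the same product $M'N'^{\mathbb{T}}$ and lies near $(A,B)$, but it may violate the column-norm bounds. I would repair it column by column: from $\|M'_i\|\leq\varsigma$ and $\|N'_i\|\leq\varsigma$ one has $\|M'_i\|\,\|N'_i\|\leq\varsigma^2$, so the interval $[\,\|N'_i\|/\varsigma,\ \varsigma/\|M'_i\|\,]$ of admissible values of $|c_i|$ is nonempty, and using $\|A_i\|\leq\varsigma$, $\|B_i\|\leq\varsigma$ together with $(\gamma A,\tfrac1\gamma B)\in\Omega_{X,Y}$ one checks that this interval contains a point within $O(\epsilon)$ of $1/|\gamma|$; choosing $c_i$ there (with the sign of $1/\gamma$) as close to $1/\gamma$ as possible and setting $M_i=c_iM'_i$, $N_i=\tfrac1{c_i}N'_i$ produces $(M,N)\in\Omega_{X,Y}$ with $MN^{\mathbb{T}}=M'N'^{\mathbb{T}}$ and $(M,N)$ within $O(\epsilon)$ of $(A,B)$, so first fixing the neighbourhood of $(A,B)$ and then taking $\epsilon$ small enough, local optimality of $(A,B)$ gives $f(M'N'^{\mathbb{T}})=f(MN^{\mathbb{T}})\geq f(AB^{\mathbb{T}})$, which is exactly what is needed.
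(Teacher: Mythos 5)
Your proposal is correct, and for parts (i)--(iii) and the global half of (iv) it follows essentially the same route as the paper: (i) by the same zeroing-out contradiction, (ii) by restricting perturbations to the support (with the $\lambda$-margin from continuity of $f$ absorbing any support growth in the converse direction), and (iii) by combining (ii) with the invariances $X^s(Y^s)^{\mathbb{T}}=XY^{\mathbb{T}}$, $\Omega_{X^s,Y^s}=\Omega_{X,Y}$. The one cosmetic difference in (iii) is that you obtain $\mathcal{L}_{(\ref{FGL0C})}\subseteq\mathcal{S}_{(\ref{FGL0C})}$ by chaining Fermat's rule with the remark preceding Definition \ref{Lsta} (limiting-critical $\Rightarrow$ stationary), whereas the paper re-derives the inclusions from part (ii) via the chain rule on the restricted problem; both are legitimate.

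Where you genuinely depart from the paper is the local case of (iv), and your version is the more careful one. The paper's own argument takes $(M,N)$ near $(\gamma X_{\mathcal{I}},Y_{\mathcal{I}}/\gamma)$ in $\Omega_{X,Y}$ and applies the local optimality inequality (\ref{eqd2}) at the rescaled pair $(\tfrac1\gamma M,\gamma N)$, but it does not verify that this pair lies in $\Omega_{X,Y}$ --- and indeed it need not: if some column of $Y_{\mathcal{I}}/\gamma$ (or of $\gamma X_{\mathcal{I}}$) sits on the boundary $\|\cdot\|=\varsigma$, an outward perturbation followed by multiplication by $\gamma$ (resp.\ $1/\gamma$) with $|\gamma|\neq1$ can violate the bound. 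Your column-by-column repair --- choosing $c_i$ with $|c_i|$ the projection of $1/|\gamma|$ onto the nonempty interval $[\|N'_i\|/\varsigma,\,\varsigma/\|M'_i\|]$, so that $(c_iM'_i,\,N'_i/c_i)$ is feasible, has the same product, and is within $O(\epsilon)$ of $(A_i,B_i)$ --- closes exactly this gap, at the cost of a slightly longer argument. So your proof buys a fully rigorous treatment of the boundary interaction that the paper glosses over, while otherwise matching its structure.
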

\begin{proof}
(i) Similar to the proof of Proposition \ref{FGL0-prps} (i) and by the geometry structure of $\Omega^1\times\Omega^2$, it holds that any global minimizer $(X,Y)$ of (\ref{FGL0C}) satisfies $\mathcal{I}_{X}=\mathcal{I}_{Y}$.
	
(ii) For a given $(\bar{X},\bar{Y})\in\Omega^1\times\Omega^2$, define $\bar{\mathcal{I}}:=\mathcal{I}_{\bar{X}}\cap\mathcal{I}_{\bar{Y}}$ and $\bar{\Omega}:=\{(X,Y)\in\Omega^1\times\Omega^2:X_{\mathcal{I}^c_{\bar{X}}}=\bm{0},Y_{\mathcal{I}^c_{\bar{Y}}}=\bm{0}\}$. There exists an $\varepsilon>0$ such that
\begin{equation}\label{ieqd1}
\begin{split}
&\mathcal{I}_{X}=\mathcal{I}_{\bar{X}},\mathcal{I}_{Y}=\mathcal{I}_{\bar{Y}},\forall\big(X,Y\big)\in B_{\varepsilon}(\bar{X},\bar{Y})\cap\bar\Omega,\\
\mbox{and }&\mathcal{I}_{X}\supsetneq\mathcal{I}_{\bar{X}}\text{ or }\mathcal{I}_{Y}\supsetneq\mathcal{I}_{\bar{Y}},\forall\big(X,Y\big)\in B_{\varepsilon}(\bar{X},\bar{Y})\setminus\bar\Omega.
\end{split}
\end{equation}
This implies $\big\{MN^{\mathbb{T}}:(M,N)\in B_{\delta}(\bar{X}_{\bar{\mathcal{I}}},\bar{Y}_{\bar{\mathcal{I}}})\cap\Omega_{\bar{X},\bar{Y}}\big\}=\big\{XY^{\mathbb{T}}:(X,Y)\in B_{\delta}(\bar{X},\bar{Y})\cap\bar\Omega\big\},\forall\delta\in(0,\varepsilon]$.
In view of $\bar{X}_{\bar{\mathcal{I}}}\bar{Y}_{\bar{\mathcal{I}}}^{\mathbb{T}}=\bar{X}\bar{Y}^{\mathbb{T}}$, $(\bar{X}_{\bar{\mathcal{I}}},\bar{Y}_{\bar{\mathcal{I}}})\in\Omega_{\bar{X},\bar{Y}}$ and $(\bar{X},\bar{Y})\in\bar\Omega$, we deduce that for given $(\bar{X},\bar{Y})$, 
\begin{align}\label{inn-eqv}
&\text{$(\bar{X}_{\bar{\mathcal{I}}},\bar{Y}_{\bar{\mathcal{I}}})$ is a local minimizer of $f(MN^{\mathbb{T}})$ on $\Omega_{\bar{X},\bar{Y}}$}\,\Longleftrightarrow\notag\\
&\text{$(\bar{X},\bar{Y})$ is a local minimizer of $f(XY^{\mathbb{T}})$ on $\bar\Omega$}
\end{align}
	
If $(\bar{X},\bar{Y})\in\mathcal{L}_{(\ref{FGL0C})}$, there exists an $\varepsilon_1\in(0,\varepsilon]$ such that $F_0(\bar{X},\bar{Y})\leq F_0(X,Y),\forall (X,Y)\in B_{\varepsilon_1}(\bar{X},\bar{Y})\cap(\Omega^1\times\Omega^2)$. It follows from (\ref{ieqd1}) that $f(\bar{X}\bar{Y}^{\mathbb{T}})\leq f(XY^{\mathbb{T}}),\forall (X,Y)\in B_{\varepsilon_1}(\bar{X},\bar{Y})\cap\bar\Omega$ and hence (\ref{inn-eqv}) holds.
	
Conversely, consider that (\ref{inn-eqv}) holds. There exists an $\varepsilon_2\in(0,\varepsilon]$ such that $f(\bar{X}\bar{Y}^{\mathbb{T}})\leq f(XY^{\mathbb{T}}),\forall (X,Y)\in B_{\varepsilon_2}(\bar{X},\bar{Y})\cap\bar\Omega$.
Further, based on (\ref{ieqd1}), we have
\begin{equation}\label{ieqd4}
F_0(\bar{X},\bar{Y})\leq F_0(X,Y),\forall (X,Y)\in B_{\varepsilon_2}(\bar{X},\bar{Y})\cap\bar\Omega.
\end{equation}
Moreover, by the continuity of $f(XY^{\mathbb{T}})$, there exists an $\varepsilon_3\in(0,\varepsilon_2]$ such that $f(\bar{X}\bar{Y}^{\mathbb{T}})\leq f(XY^{\mathbb{T}})+\lambda,\forall (X,Y)\in B_{\varepsilon_3}(\bar{X},\bar{Y})$. Combining with (\ref{ieqd1}), we deduce
that 
\begin{equation}\label{ieqd5}
\begin{split}
&F_0(\bar{X},\bar{Y})\leq f(XY^{\mathbb{T}})+\lambda\big(1+\text{nnzc}(\bar{X})+\text{nnzc}(\bar{Y})\big)\\
\leq\,&F_0(X,Y),\,\,\forall (X,Y)\in B_{\varepsilon_3}(\bar{X},\bar{Y})\cap(\Omega^1\times\Omega^2)\setminus\bar\Omega.
\end{split}
\end{equation}
This together with (\ref{ieqd4}) implies $F_0(\bar{X},\bar{Y})\leq F_0(X,Y),\forall (X,Y)\in B_{\varepsilon_3}(\bar{X},\bar{Y})\cap(\Omega^1\times\Omega^2)$. Thus, $(\bar{X},\bar{Y})\in\mathcal{L}_{(\ref{FGL0C})}$.
	
By $\bar{X}\bar{Y}^{\mathbb{T}}=\bar{X}_{\bar{\mathcal{I}}}\bar{Y}_{\bar{\mathcal{I}}}^{\mathbb{T}}$ and the continuity of $\|MN^{\mathbb{T}}\|$ in $\Omega_{\bar{X},\bar{Y}}$, the last statement in (ii) naturally holds.
	
(iii) Let $(X,Y)\in\mathcal{L}_{(\ref{FGL0C})}$. Then, by (ii), $(X_{\mathcal{I}},Y_{\mathcal{I}})$ is a local minimizer of $f(MN^{\mathbb{T}})$ on $\Omega_{X,Y}$. It follows from Fermat's rule in \cite[Theorem 10.1]{Rockafellar1998} and \cite[Exercise 10.10]{Rockafellar1998} that
$\bm{0}\in\partial_{(X_{\mathcal{I}},Y_{\mathcal{I}})}f(X_{\mathcal{I}}Y_{\mathcal{I}}^{\mathbb{T}})+N_{\Omega_{X,Y}}(X_{\mathcal{I}},Y_{\mathcal{I}})$. Further, by \cite[Theorem 6.23]{Penot2013}, \cite[Proposition 10.5]{Rockafellar1998} and $X_{\mathcal{I}}Y_{\mathcal{I}}^{\mathbb{T}}=XY^{\mathbb{T}}$, we have that there exists an $H\in\partial_Zf(XY^{\mathbb{T}})$ such that $\bm{0}\in HY_{\mathcal{I}}+N_{\Omega^1_{\mathcal{I}}}(X_{\mathcal{I}})$ and $\bm{0}\in H^{\mathbb{T}}X_{\mathcal{I}}+N_{\Omega^2_{\mathcal{I}}}(Y_{\mathcal{I}})$. Therefore, $(X,Y)\in\mathcal{S}_{(\ref{FGL0C})}$. Moreover, by the definition of $(X^s,Y^s)$, we have $X^s_{\mathcal{I}_{X^s}\cap\mathcal{I}_{Y^s}}=X_{\mathcal{I}}$, $Y^s_{\mathcal{I}_{X^s}\cap\mathcal{I}_{Y^s}}=Y_{\mathcal{I}}$ and $\Omega_{X^s,Y^s}=\Omega_{X,Y}$. Then, by the equivalence in (ii), $(X^s_{\mathcal{I}_{X^s}\cap\mathcal{I}_{Y^s}},Y^s_{\mathcal{I}_{X^s}\cap\mathcal{I}_{Y^s}})$ is a local minimizer of $f(MN^{\mathbb{T}})$ on $\Omega_{X^s,Y^s}$ and hence $(X^s,Y^s)\in\mathcal{L}_{(\ref{FGL0C})}$.
	
Suppose $(X,Y)\in\mathcal{S}_{(\ref{FGL0C})}$. Considering $X^s_{\mathcal{I}}=X_{\mathcal{I}}$, $Y^s_{\mathcal{I}}=Y_{\mathcal{I}}$ and $\mathcal{I}=\mathcal{I}_{X^s}=\mathcal{I}_{Y^s}$, we have that (\ref{Lsta-incl}) holds with $X=X^s$, $Y=Y^s$ and $\mathcal{I}=\mathcal{I}_{X^s}\cap\mathcal{I}_{Y^s}$. This yields $(X^s,Y^s)\in\mathcal{S}_{(\ref{FGL0C})}$.
	
Since $\text{nnzc}(X^s)\leq\text{nnzc}(X)$, $\text{nnzc}(Y^s)\leq\text{nnzc}(Y)$ and $X^s(Y^s)^{\mathbb{T}}=XY^{\mathbb{T}}$, we get $F_0(X^s,Y^s)\leq F_0(X,Y)$.
	
(iv) For a given $(X,Y)\in\Omega^1\times\Omega^2$, let $\gamma\neq 0$ satisfy $(\gamma X,\frac{1}{\gamma}Y)\in\Omega^1\times\Omega^2$. This implies $\Omega_{\gamma X,Y/\gamma}=\Omega_{X,Y}$, $\mathcal{I}_{X}=\mathcal{I}_{\gamma X}$ and $\mathcal{I}_{Y}=\mathcal{I}_{\frac{1}{\gamma}Y}$. Further, if $(X,Y)\in\mathcal{G}_{(\ref{FGL0C})}$, then by $\gamma X\frac{1}{\gamma}Y^{\mathbb{T}}=XY^{\mathbb{T}}$, we obtain $F_0(X,Y)=F_0(\gamma X,\frac{1}{\gamma}Y)$, which implies $(\gamma X,\frac{1}{\gamma}Y)\in\mathcal{G}_{(\ref{FGL0C})}$.
	
Suppose $(X,Y)\in\mathcal{L}_{(\ref{FGL0C})}$. Based on (ii), there exists a $\delta>0$ such that
\begin{equation}\label{eqd2}
f(X_{\mathcal{I}}Y_{\mathcal{I}}^{\mathbb{T}})\leq f(MN^{\mathbb{T}}),\forall (M,N)\in B_{\delta}(X_{\mathcal{I}},Y_{\mathcal{I}})\cap\Omega_{X,Y}.
\end{equation}
Moreover, there exists a $\bar\delta>0$ such that $(\frac{1}{{\gamma}}M,\gamma N)\in B_{\delta}(X_{\mathcal{I}},Y_{\mathcal{I}}),\forall(M,N)\in B_{\bar\delta}(\gamma X_{\mathcal{I}},\frac{1}{\gamma}Y_{\mathcal{I}})$. Combining with (\ref{eqd2}) and $\Omega_{\gamma X,Y/\gamma}=\Omega_{X,Y}$, we obtain that for any $(M,N)\in B_{\bar\delta}(\gamma X_{\mathcal{I}},Y_{\mathcal{I}}/\gamma)\cap\Omega_{\gamma X,Y/\gamma}$,
\begin{equation*}
f\big((\gamma X_{\mathcal{I}})(Y_{\mathcal{I}}/\gamma)^{\mathbb{T}}\big)=f(X_{\mathcal{I}}Y_{\mathcal{I}}^{\mathbb{T}})\leq f((M/\gamma)(\gamma N)^{\mathbb{T}})= f(MN^{\mathbb{T}}),
\end{equation*}
which implies $(\gamma X,\frac{1}{\gamma} Y)\in\mathcal{L}_{(\ref{FGL0C})}$ by (ii).
\end{proof}
\begin{remark}\label{lc-pr-id}
According to Proposition \ref{FGL0C-prps} (ii) and (iv), for any given local minimizer $(X,Y)$ of (\ref{FGL0C}), we can obtain a corresponding subset of the local minimizers of (\ref{FGL0C}), that is $\{(U,V)\in\Omega^1\times\Omega^2:U_{\mathcal{I}_{X}\cap\mathcal{I}_{Y}}=\gamma X_{\mathcal{I}_{X}\cap\mathcal{I}_{Y}},V_{\mathcal{I}_{X}\cap\mathcal{I}_{Y}}=\frac{1}{\gamma}Y_{\mathcal{I}_{X}\cap\mathcal{I}_{Y}},\gamma>0\text{ and }\mathcal{I}_{U}\cap\mathcal{I}_{V}=\mathcal{I}_{X}\cap\mathcal{I}_{Y}\}:=\Delta_{X,Y}$.
	
Further, if $|\mathcal{I}_{X}\cap\mathcal{I}_{Y}|<d$, then there exists a local minimizer $(\hat{X},\hat{Y})\in\Delta_{X,Y}$ of (\ref{FGL0C}) satisfying $\mathcal{I}_{\hat{X}}\neq\mathcal{I}_{\hat{Y}}$, which implies that the column consistency property is not a necessary condition for all local minimizers. Moreover, $(X^s,Y^s)$ defined in Proposition \ref{FGL0C-prps} (iii) belongs to $\Delta_{X,Y}$. By $\hat{X}\hat{Y}^{\mathbb{T}}=X^s(Y^s)^{\mathbb{T}}$ and $\emph{\text{nnzc}}(\hat{X})+\emph{\text{nnzc}}(\hat{Y})>\emph{\text{nnzc}}(X^s)+\emph{\text{nnzc}}(Y^s)$, we have $F_0(\hat{X},\hat{Y})>F_0(X^s,Y^s)$. 
Therefore, the operation in (\ref{rdc-elm}) can enforce the column consistency from the global minimizers of (\ref{FGL0C}), and potentially reduce the objective function value while maintaining the local optimality to (\ref{FGL0C}). This shows the superiority and necessity for performing the operation in (\ref{rdc-elm}).
\end{remark}

\subsection{Relaxation and equivalence}
In this subsection, we define some stationary points for problem (\ref{FGL0C}) and its relaxation problem (\ref{FGL0R}). Further, for these problems, we prove their equivalence in the sense of global minimizers and the defined strong stationary points, as well as the inclusion relations on local minimizers and  stationary points, respectively.

We first introduce the parameter $\kappa$ used in what follows. Based on the locally Lipschitz continuity of $f$ and the boundedness of $\Omega^1$ and $\Omega^2$, there exists $\kappa>0$ such that $\max\{\|H Y_i\|,\|H^{\mathbb{T}}X_i\|\}\leq\kappa$ for any $(X,Y)\in\Omega^1\times\Omega^2$, $H\in\partial_Zf(XY^{\mathbb{T}})$ and $i\in[d]$.

Throughout this section, we assume that $\nu$ fulfills the following assumption.
\begin{assumption}\label{mu-nu}
$\nu$ in (\ref{FGL0R}) satisfies $\nu\in(0,\min\{\lambda/\kappa,\Mbound\})$.
\end{assumption}

In what follows, we define a class of stationary points for the relaxation problem (\ref{FGL0R}).
\begin{definition}[\bf Stationary point of (\ref{FGL0R})]\label{Rsta}
We call $(X,Y)\in\Omega^1\times\Omega^2$ as a stationary point of (\ref{FGL0R}) if there exists an $H\in\partial_Zf(XY^{\mathbb{T}})$ such that
\begin{equation}\label{Rsta-incl}
\bm{0}\in HY+\lambda\partial_{X}\Theta_{\nu,D^X}(X)+N_{\Omega^1}(X)\mbox{ and }\bm{0}\in H^{\mathbb{T}}X+\lambda\partial_{Y}\Theta_{\nu,D^Y}(Y)+N_{\Omega^2}(Y),
\end{equation}
where $D^{X}$ and $D^{Y}$ are index vectors in $\{1,2\}^d$ defined by $D^{C}\in\{1,2\}^d$ with $D^{C}_i:=\max\{j\in\{1,2\}:\theta_{\nu,j}(\|C_i\|)=\theta_\nu(\|C_i\|)\},i\in[d]$ for any $C\in\mathbb{R}^{r\times d}$. The set of stationary points is denoted as $\mathcal{S}_{(\ref{FGL0R})}$ for problem (\ref{FGL0R}).
\end{definition}
Further, we define an important property for the matrix couple. It will be proved that this property stems from the stationary points of problem (\ref{FGL0R}) and plays a pivotal role in establishing the equivalence between the global minimizers of (\ref{FGL0C}) and (\ref{FGL0R}).
\begin{definition}[\bf  Column bound-$\alpha$ property]\label{col-bd-p-def}
For a given constant $\alpha\in(0,\nu]$, we say that a matrix couple $(X,Y)\in\mathbb{R}^{m\times d}\times\mathbb{R}^{n\times d}$ has the column bound-$\alpha$ property if $\|X_i\|\notin(0,\alpha)$ and $\|Y_i\|\notin(0,\alpha),\forall i\in[d]$. That is, either $\norm{X_i} \geq \alpha$ or $\norm{X_i} = 0$ for all $i\in [d]$; similarly for $Y$.
\end{definition}

Next, we show some properties of the local minimizers and stationary points of problem (\ref{FGL0R}).
\begin{proposition}\label{gl-lc-sta}
Problem (\ref{FGL0R}) owns the following properties.
\begin{itemize}
\item[(i)] $\mathcal{L}_{(\ref{FGL0R})}\subseteq\mathcal{S}_{(\ref{FGL0R})}$.
\item[(ii)] Let $(X,Y)\in\mathcal{S}_{(\ref{FGL0R})}$. Then, $(X,Y)$ has the column bound-$\nu$ property.
\item[(iii)] Let $(X^s,Y^s)$ be defined by (\ref{rdc-elm}). If $(X,Y)\in\mathcal{L}_{(\ref{FGL0R})}$ (or $\mathcal{S}_{(\ref{FGL0R})}$), then $(X^s,Y^s)\in\mathcal{L}_{(\ref{FGL0R})}$ (or $\mathcal{S}_{(\ref{FGL0R})}$) and $F(X^s,Y^s)\leq F(X,Y)$.
\end{itemize}
\end{proposition}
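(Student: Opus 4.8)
The plan is to handle the three items in turn, exploiting that both the relaxed regularizer and the constraint split over the $d$ columns.

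\textbf{Part (i).} Fix $(X,Y)\in\mathcal{L}_{(\ref{FGL0R})}$ and let $D^X,D^Y\in\{1,2\}^d$ be as in Definition \ref{Rsta}. Since $\Theta_\vartheta\le\Theta_{\vartheta,I}$ holds in general, while by the choice of $D^X$ one has $\theta_{\nu,D^X_i}(\|X_i\|)=\theta_\nu(\|X_i\|)$ for every $i$ (and likewise for $Y$), the function $G(U,V):=f(UV^{\mathbb{T}})+\lambda\Theta_{\nu,D^X}(U)+\lambda\Theta_{\nu,D^Y}(V)+\delta_{\Omega^1}(U)+\delta_{\Omega^2}(V)$ obeys $G\ge F+\delta_{\Omega^1\times\Omega^2}$ everywhere with equality at $(X,Y)$, so $(X,Y)$ is also a local minimizer of $G$. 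Here $\lambda\Theta_{\nu,D^X}$ is a finite sum of the convex maps $U\mapsto\|U_i\|/\nu$ and of constants, hence $\lambda\Theta_{\nu,D^X}+\delta_{\Omega^1}$ and $\lambda\Theta_{\nu,D^Y}+\delta_{\Omega^2}$ are convex. I would then combine Fermat's rule \cite[Theorem 10.1]{Rockafellar1998} applied to $G$ at $(X,Y)$, the subdifferential sum rule \cite[Exercise 10.10]{Rockafellar1998} (valid because $(U,V)\mapsto f(UV^{\mathbb{T}})$ is locally Lipschitz), the chain rule for the smooth bilinear map $(U,V)\mapsto UV^{\mathbb{T}}$ (producing the terms $HY$ and $H^{\mathbb{T}}X$ with $H\in\partial_Zf(XY^{\mathbb{T}})$, exactly as in the proof of Proposition \ref{FGL0C-prps}(iii)), and the Moreau--Rockafellar sum rule for the two convex pieces, to obtain precisely (\ref{Rsta-incl}); hence $\mathcal{L}_{(\ref{FGL0R})}\subseteq\mathcal{S}_{(\ref{FGL0R})}$.

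\textbf{Parts (ii) and the easy part of (iii).} For (ii) I would argue by contradiction: if $\|X_i\|\in(0,\nu)$ for some $i$, then $\theta_{\nu,1}(\|X_i\|)<1=\theta_{\nu,2}(\|X_i\|)$, so $D^X_i=1$ and the $i$-th column of $\lambda\partial_X\Theta_{\nu,D^X}(X)$ is the single vector $\lambda X_i/(\nu\|X_i\|)$. Since $\|X_i\|<\nu<\Mbound$ under Assumption \ref{mu-nu}, $X_i$ is interior to its ball constraint, so the $i$-th column of $N_{\Omega^1}(X)$ is $\{\bm{0}\}$; reading the $i$-th column of the first inclusion in (\ref{Rsta-incl}) forces $HY_i=-\lambda X_i/(\nu\|X_i\|)$, i.e. $\|HY_i\|=\lambda/\nu>\kappa$, contradicting the definition of $\kappa$. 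The same argument on the second inclusion excludes $\|Y_i\|\in(0,\nu)$, proving the column bound-$\nu$ property. For the easy pieces of (iii): each column index outside $\mathcal{I}=\mathcal{I}_X\cap\mathcal{I}_Y$ pairs a zero column of $X$ or of $Y$, so $X^s(Y^s)^{\mathbb{T}}=XY^{\mathbb{T}}$, while passing from $(X,Y)$ to $(X^s,Y^s)$ only deletes nonnegative terms from $\Theta_\nu$, giving $F(X^s,Y^s)\le F(X,Y)$; and the stationarity transfer is checked column by column for the same $H$ (note $X^s(Y^s)^{\mathbb T}=XY^{\mathbb T}$): for $i\in\mathcal{I}$ one has $X^s_i=X_i$, $Y^s_i=Y_i$, $D^{X^s}_i=D^X_i$, so those columns of (\ref{Rsta-incl}) are inherited from $(X,Y)$; for $i\notin\mathcal{I}$ one has $X^s_i=\bm{0}=Y^s_i$, hence $HY^s_i=\bm{0}\in\lambda\partial(\|\cdot\|/\nu)(\bm{0})+N_{B_{\Mbound}}(\bm{0})$ (and symmetrically in $Y$), so $(X^s,Y^s)\in\mathcal{S}_{(\ref{FGL0R})}$.

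\textbf{Part (iii), the obstacle: $(X,Y)\in\mathcal{L}_{(\ref{FGL0R})}\Rightarrow(X^s,Y^s)\in\mathcal{L}_{(\ref{FGL0R})}$.} This is the hard part, since the continuity of $\Theta_\nu$ makes the ``jump'' argument underlying Proposition \ref{FGL0C-prps} inapplicable; I would use a quantitative lifting instead. Set $\mathcal{J}:=(\mathcal{I}_X\cup\mathcal{I}_Y)\setminus\mathcal{I}$. Given $(\tilde X,\tilde Y)$ near $(X^s,Y^s)$ in $\Omega^1\times\Omega^2$, form $(\hat X,\hat Y)$ by keeping the original columns $(X_i,\bm{0})$ for $i\in\mathcal{I}_X\setminus\mathcal{I}$, the original $(\bm{0},Y_i)$ for $i\in\mathcal{I}_Y\setminus\mathcal{I}$, and $(\tilde X_i,\tilde Y_i)$ for every remaining $i$. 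Then $(\hat X,\hat Y)\in\Omega^1\times\Omega^2$, $\|(\hat X,\hat Y)-(X,Y)\|_F\le\|(\tilde X,\tilde Y)-(X^s,Y^s)\|_F$, and $\hat X\hat Y^{\mathbb{T}}-\tilde X\tilde Y^{\mathbb{T}}=-\sum_{i\in\mathcal{J}}\tilde X_i\tilde Y_i^{\mathbb{T}}$. Using the column bound-$\nu$ property from (i)--(ii) — which gives $\theta_\nu(\|X_i\|)=1$ on $\mathcal{I}_X\cup\mathcal{I}_Y$, and $\theta_\nu(\|\tilde X_i\|)=\|\tilde X_i\|/\nu$, $\theta_\nu(\|\tilde Y_i\|)=\|\tilde Y_i\|/\nu$ on $\mathcal{J}$ once the radius is below $\nu$ — a term-by-term computation gives
\[
F(\hat X,\hat Y)=F(\tilde X,\tilde Y)+\big(F(X,Y)-F(X^s,Y^s)\big)-\frac{\lambda}{\nu}\sum_{i\in\mathcal{J}}\big(\|\tilde X_i\|+\|\tilde Y_i\|\big)+\Delta_f,
\]
with $|\Delta_f|\le L\sum_{i\in\mathcal{J}}\|\tilde X_i\|\,\|\tilde Y_i\|$ for a local Lipschitz modulus $L$ of $f$. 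Since $F(X,Y)\le F(\hat X,\hat Y)$ and $\|\tilde X_i\|\,\|\tilde Y_i\|\le\tfrac12\|(\tilde X,\tilde Y)-(X^s,Y^s)\|_F(\|\tilde X_i\|+\|\tilde Y_i\|)$ for $i\in\mathcal{J}$, restricting the radius below $\min\{\nu,\,2\lambda/(L\nu)\}$ and below the local-minimality radius of $(X,Y)$ makes the negative linear term dominate $\Delta_f$, so $F(X^s,Y^s)\le F(\tilde X,\tilde Y)$, i.e. $(X^s,Y^s)\in\mathcal{L}_{(\ref{FGL0R})}$. The delicate steps I anticipate are the careful term-by-term verification of the displayed identity and the calibration of the radius so that the quadratic perturbation of $f$ is absorbed by the linear gain from the relaxed penalty on the ``phantom'' columns in $\mathcal{J}$.
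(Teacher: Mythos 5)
Your proof is correct, and for items (i), (ii) and the stationary-point half of (iii) it coincides with the paper's argument: (i) majorize $F$ by replacing $\Theta_\nu$ with the tangent functions $\Theta_{\nu,D^X},\Theta_{\nu,D^Y}$, which touch $F$ at $(X,Y)$, and read off (\ref{Rsta-incl}); (ii) the same $\lambda/\nu>\kappa$ contradiction on an interior column; and the transfer of (\ref{Rsta-incl}) to $(X^s,Y^s)$ column by column with the same $H$, using $X^s(Y^s)^{\mathbb{T}}=XY^{\mathbb{T}}$.

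Where you genuinely diverge is the local-minimizer transfer in (iii). The paper dispatches this in one sentence, essentially asserting that since the dropped columns contribute $\theta_\nu=0$ at $(X^s,Y^s)$ (the minimum possible value) and the product is unchanged, local minimality carries over. That sentence hides exactly the difficulty you isolate: any lifting of a perturbation $(\tilde X,\tilde Y)$ of $(X^s,Y^s)$ back to a perturbation $(\hat X,\hat Y)$ of $(X,Y)$ changes the product by the cross terms $\sum_{i\in\mathcal{J}}\tilde X_i\tilde Y_i^{\mathbb{T}}$, so $f$ is evaluated at a different point and one must verify that this residual is harmless. Your quantitative argument — choosing the lifting that keeps $(X_i,\bm{0})$ and $(\bm{0},Y_j)$ on the phantom columns so that the residual is second order, and then showing it is absorbed by the first-order penalty gain $\frac{\lambda}{\nu}\sum_{i\in\mathcal{J}}(\|\tilde X_i\|+\|\tilde Y_i\|)$ once the radius is below $\min\{\nu,2\lambda/(L\nu)\}$ — is a correct and complete way to do this, and in fact supplies a rigorous justification for a step the paper leaves implicit. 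The identity you display checks out (the term $F(X,Y)-F(X^s,Y^s)$ equals $\lambda(|\mathcal{I}_X\setminus\mathcal{I}|+|\mathcal{I}_Y\setminus\mathcal{I}|)$ by the column bound-$\nu$ property), as does the bound $\|\tilde X_i\|\|\tilde Y_i\|\le\tfrac12\|(\tilde X,\tilde Y)-(X^s,Y^s)\|_F(\|\tilde X_i\|+\|\tilde Y_i\|)$. The price of your route is only length; what it buys is that the argument works for any locally Lipschitz $f$ with no hidden compatibility condition between the Lipschitz modulus, $\lambda$ and $\nu$, precisely because the residual is quadratic in the perturbation while the penalty gain is linear.
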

\begin{proof}
(i) Let $(\bar{X},\bar{Y})\in\mathcal{L}_{(\ref{FGL0R})}$. Then, there exists a $\delta>0$ such that $F(\bar{X},\bar{Y})\leq F(X,Y),\forall(X,Y)\in B_\delta(\bar{X},\bar{Y})\cap(\Omega^1\times\Omega^2)$. Define $g(X,Y)=\Theta_{\nu,D^{\bar{X}}}(X)+\Theta_{\nu,D^{\bar{Y}}}(Y)$. Since $\Theta_\nu(X)+\Theta_\nu(Y)\leq g(X,Y),\forall (X,Y)$ and $\Theta_\nu(\bar{X})+\Theta_\nu(\bar{Y})=g(\bar{X},\bar{Y})$, we have $f(\bar{X}\bar{Y}^{\mathbb{T}})+\lambda g(\bar{X},\bar{Y})=F(\bar{X},\bar{Y})\leq F(X,Y)\leq f(XY^{\mathbb{T}})+\lambda g(X,Y),\forall(X,Y)\in B_\delta(\bar{X},\bar{Y})\cap(\Omega^1\times\Omega^2)$. Then, by the optimality condition, (\ref{Rsta-incl}) holds, which means $(\bar{X},\bar{Y})\in\mathcal{S}_{(\ref{FGL0R})}$.
	
(ii) Let $(X,Y)\in\mathcal{S}_{(\ref{FGL0R})}$. If there exists an $i_0\in[d]$ such that $\|X_{i_0}\|\in(0,\nu)$, it follows that $D^{X}_{i_0}=1$. Combining with Assumption \ref{mu-nu}, we have $X_{i_0}\in{\rm{int}}\Omega^1_{i_0}$ (hence $N_{\Omega^1_{i_0}}(X_{i_0})=\{0\}$) and
\begin{equation*}
\|\lambda\nabla_{X_{i_0}}\Theta_{\nu,D^{X}}(X)\|={\lambda}/{\nu}>\kappa\geq\|HY_{i_0}\|,\forall H\in\partial_Zf(XY^{\mathbb{T}}).
\end{equation*}
Then, we infer that $\bm{0}\notin\partial_Zf(XY^{\mathbb{T}})Y_{i_0}+\lambda\nabla_{X_{i_0}}\Theta_{\nu,D^{X}}(X)+N_{\Omega^1_{i_0}}(X_{i_0})$, which contradicts (\ref{Rsta-incl}). Hence, $\|X_i\|\notin(0,\nu),\forall i\in[d]$. Similarly, it holds $\|Y_i\|\notin(0,\nu),\forall i\in[d]$. Thus, $(X,Y)$ has the column bound-$\nu$ property.
	
(iii) Suppose $(X,Y)\in\mathcal{S}_{(\ref{FGL0R})}$. By (ii) and (\ref{Rsta-incl}), there exists an $H\in\partial_Zf(XY^{\mathbb{T}})$ such that
\begin{equation}\label{Rsta-incl-tr}
\bm{0}\in HY+\frac{\lambda}{\nu}\sum_{i\in\mathcal{I}^c_{X}}\partial_{X}\|X_i\|+N_{\Omega^1}(X)\mbox{ and }\bm{0}\in H^{\mathbb{T}}X+\frac{\lambda}{\nu}\sum_{i\in\mathcal{I}^c_{Y}}\partial_{Y}\|Y_i\|+N_{\Omega^2}(Y).
\end{equation}
Let $\mathcal{I}=\mathcal{I}_{X}\cap\mathcal{I}_{Y}$ and $(X^s,Y^s)$ be defined by (\ref{rdc-elm}). Combining $X^s_{\mathcal{I}}=X_{\mathcal{I}}$, $Y^s_{\mathcal{I}}=Y_{\mathcal{I}}$ and (\ref{Rsta-incl-tr}), we get
\begin{equation}\label{Rsta-incl-tr2}
\begin{split}
&\bm{0}\in \Big[HY^s+\frac{\lambda}{\nu}\sum_{i\in\mathcal{I}^c}\partial_{X}\|X^s_i\|+N_{\Omega^1}(X^s)\Big]_{\mathcal{I}}\\
\mbox{and }&\bm{0}\in \Big[H^{\mathbb{T}}X^s+\frac{\lambda}{\nu}\sum_{i\in\mathcal{I}^c}\partial_{Y}\|Y^s_i\|+N_{\Omega^2}(Y^s)\Big]_{\mathcal{I}}.
\end{split}
\end{equation}
Since $X^s(Y^s)^{\mathbb{T}}=XY^{\mathbb{T}}$, we have $H\in\partial_Zf(X^s(Y^s)^{\mathbb{T}})$. According to Assumption \ref{mu-nu}, we deduce that $\bm{0}\in \big[HY^s+\frac{\lambda}{\nu}\sum_{i\in\mathcal{I}^c}\partial_{X}\|X^s_i\|+N_{\Omega^1}(X^s)\big]_{\mathcal{I}^c}$ and $\bm{0}\in \big[H^{\mathbb{T}}X^s+\frac{\lambda}{\nu}\sum_{i\in\mathcal{I}^c}\partial_{Y}\|Y^s_i\|+N_{\Omega^2}(Y^s)\big]_{\mathcal{I}^c}$. This combined with (\ref{Rsta-incl-tr2}) and the column bound-$\nu$ property of $(X^s,Y^s)$ yields $\bm{0}\in HY^s+\lambda\partial_{X}\Theta_{\nu,D^{X^s}}(X^s)+N_{\Omega^1}(X^s)$ and $\bm{0}\in H^{\mathbb{T}}X^s+\lambda\partial_{Y}\Theta_{\nu,D^{Y^s}}(Y^s)+N_{\Omega^2}(Y^s)$. Therefore, $(X^s,Y^s)\in\mathcal{S}_{(\ref{FGL0R})}$. Moreover, by $\mathcal{I}_{X^s}\subseteq\mathcal{I}_{X}$, $\mathcal{I}_{Y^s}\subseteq\mathcal{I}_{Y}$ and $X^s(Y^s)^{\mathbb{T}}=XY^{\mathbb{T}}$, we get $F(X^s,Y^s)\leq F(X,Y)$.
	
Further, consider the case $(X,Y)\in\mathcal{L}_{(\ref{FGL0R})}$. By (i), we have $(X,Y)\in\mathcal{S}_{(\ref{FGL0R})}$ and hence $F(X^s,Y^s)\leq F(X,Y)$. For any $i\in\mathcal{I}_{X}\setminus\mathcal{I}$ and $j\in\mathcal{I}_{Y}\setminus\mathcal{I}$, we have $0=\theta_\nu(\|X^s_i\|) \leq\theta_\nu(\|\tilde{X}_i\|),\forall \tilde{X}\in\mathbb{R}^{m\times d}$ and $0=\theta_\nu(\|Y^s_j\|)\leq\theta_\nu(\|\tilde{Y}_j\|),\forall \tilde{Y}\in\mathbb{R}^{n\times d}$. This together with $X^s_i=X_i,\forall i\in([d]\setminus{\mathcal{I}_X})\cup\mathcal{I}$, $Y^s_j=Y_j,\forall j\in([d]\setminus{\mathcal{I}_Y})\cup\mathcal{I}$ and $X^s(Y^s)^{\mathbb{T}}=XY^{\mathbb{T}}$ implies $(X^s,Y^s)\in\mathcal{L}_{(\ref{FGL0R})}$.
\end{proof}
Proposition \ref{gl-lc-sta} not only states that any stationary point of (\ref{FGL0R}) owns the column bound-$\nu$ property, but also explains that the operation of (\ref{rdc-elm}) on any stationary point (or local minimizer) can give a stationary point (or local minimizer) with no larger objective function value. Notably, this superiority of (\ref{rdc-elm}) is also applicable to the stationary points and local minimizers of problem (\ref{FGL0C}) by Proposition \ref{FGL0C-prps} (iii).

In what follows, we define a class of strong stationary points for problems (\ref{FGL0C}) and (\ref{FGL0R}), which are stationary points satisfying certain properties of their
global minimizers.

\begin{definition}[\bf strong stationary points of (\ref{FGL0C}) and (\ref{FGL0R})]\label{CR-sstr}
For problem (\ref{FGL0C}) (or (\ref{FGL0R})), we call $(X,Y)\in\Omega^1\times\Omega^2$ as a strong stationary point  if it  satisfies 
\begin{itemize}
\item[(i)] $(X,Y)\in\mathcal{S}_{(\ref{FGL0C})}$ (or $\mathcal{S}_{(\ref{FGL0R})})$;
\item[(ii)] the column consistency property in Definition \ref{col_sons_pr};
\item[(iii)] the column bound-$\nu$ property in Definition \ref{col-bd-p-def}.
\end{itemize}
The sets of all strong stationary points of (\ref{FGL0C}) and (\ref{FGL0R}) are denoted as $\mathcal{S}^s_{(\ref{FGL0C})}$ and $\mathcal{S}^s_{(\ref{FGL0R})}$, respectively.
\end{definition}

As shown in the next theorem, the properties in (ii) and (iii) of Definition \ref{CR-sstr} 
are satisfied by the global minimizers of problems (\ref{FGL0C}) and (\ref{FGL0R}).
\begin{theorem}\label{stat-loc}
The following statements hold for problems (\ref{FGL0C}) and (\ref{FGL0R}).
\begin{itemize}
\item[(i)] $\mathcal{G}_{(\ref{FGL0R})}=\mathcal{G}_{(\ref{FGL0C})}$ and $\mathcal{L}_{(\ref{FGL0R})}\subseteq\mathcal{L}_{(\ref{FGL0C})}$. Moreover, the optimal values of (\ref{FGL0C}) and (\ref{FGL0R}) are the same.
\item[(ii)] For any $(X,Y)\in\mathcal{G}_{(\ref{FGL0C})}$ (or $\mathcal{G}_{(\ref{FGL0R})}$), $(X,Y)$ owns the column consistency property and column bound-$\nu$ property.
\item[(iii)] $\mathcal{G}_{(\ref{FGL0C})}\subseteq\mathcal{S}^s_{(\ref{FGL0C})}$ and $\mathcal{G}_{(\ref{FGL0R})}\subseteq\mathcal{S}^s_{(\ref{FGL0R})}$. Moreover, $\mathcal{S}^s_{(\ref{FGL0C})}=\mathcal{S}^s_{(\ref{FGL0R})}\subseteq\mathcal{S}_{(\ref{FGL0R})}\subseteq\mathcal{S}_{(\ref{FGL0C})}$.
\end{itemize}
\end{theorem}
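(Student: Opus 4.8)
The plan rests on one elementary observation: since $\theta_\nu(t)=\min\{t/\nu,1\}\le\mathcal{I}(t)$ for every $t\ge 0$, with equality precisely when $t\notin(0,\nu)$, we have $F(X,Y)\le F_0(X,Y)$ everywhere on $\Omega^1\times\Omega^2$, and $F(X,Y)=F_0(X,Y)$ if and only if $(X,Y)$ has the column bound-$\nu$ property of Definition \ref{col-bd-p-def} (because $\Theta_\nu(X)+\Theta_\nu(Y)\le\text{nnzc}(X)+\text{nnzc}(Y)$ term by term). Combined with Proposition \ref{gl-lc-sta}, which tells us that every stationary point of (\ref{FGL0R}) automatically enjoys the column bound-$\nu$ property, this lets us transfer optimality back and forth between the two models at essentially no cost.

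For part (i), I would first note that $\Omega^1\times\Omega^2$ is compact and $F$ continuous, so $\mathcal{G}_{(\ref{FGL0R})}\neq\emptyset$; picking $(X^\star,Y^\star)\in\mathcal{G}_{(\ref{FGL0R})}$, Proposition \ref{gl-lc-sta}(i)--(ii) make it a stationary point of (\ref{FGL0R}) with the column bound-$\nu$ property, so $F(X^\star,Y^\star)=F_0(X^\star,Y^\star)$. Together with $F\le F_0$ pointwise this forces $\min F=\min F_0$ and $(X^\star,Y^\star)\in\mathcal{G}_{(\ref{FGL0C})}$, hence $\mathcal{G}_{(\ref{FGL0R})}\subseteq\mathcal{G}_{(\ref{FGL0C})}$; the reverse inclusion is just $F(X,Y)\le F_0(X,Y)=\min F_0=\min F$ for $(X,Y)\in\mathcal{G}_{(\ref{FGL0C})}$. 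For $\mathcal{L}_{(\ref{FGL0R})}\subseteq\mathcal{L}_{(\ref{FGL0C})}$, I would use Proposition \ref{gl-lc-sta}(i)--(ii) again to get $F(\bar X,\bar Y)=F_0(\bar X,\bar Y)$ at a local minimizer of (\ref{FGL0R}), and then the sandwich $F_0(X,Y)\ge F(X,Y)\ge F(\bar X,\bar Y)=F_0(\bar X,\bar Y)$ on a neighborhood. Part (ii) then reduces, via $\mathcal{G}_{(\ref{FGL0C})}=\mathcal{G}_{(\ref{FGL0R})}$, to two facts: column consistency comes from Proposition \ref{FGL0C-prps}(i) (or one can reprove it by the column-zeroing argument of Proposition \ref{FGL0-prps}(i), using that zeroing a column never increases $\Theta_\nu$ and strictly decreases it on a nonzero column), and column bound-$\nu$ comes from Proposition \ref{gl-lc-sta}(i)--(ii) applied to the global-is-local minimizer of (\ref{FGL0R}).

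For part (iii), the inclusions $\mathcal{G}_{(\ref{FGL0C})}\subseteq\mathcal{S}^s_{(\ref{FGL0C})}$ and $\mathcal{G}_{(\ref{FGL0R})}\subseteq\mathcal{S}^s_{(\ref{FGL0R})}$ just combine part (ii) with Proposition \ref{FGL0C-prps}(iii) (resp. Proposition \ref{gl-lc-sta}(i)) to check the three requirements of Definition \ref{CR-sstr}. The chain $\mathcal{S}^s_{(\ref{FGL0C})}=\mathcal{S}^s_{(\ref{FGL0R})}\subseteq\mathcal{S}_{(\ref{FGL0R})}\subseteq\mathcal{S}_{(\ref{FGL0C})}$ is the heart of the matter, with $\mathcal{S}^s_{(\ref{FGL0R})}\subseteq\mathcal{S}_{(\ref{FGL0R})}$ being immediate from Definition \ref{CR-sstr}. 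For $\mathcal{S}_{(\ref{FGL0R})}\subseteq\mathcal{S}_{(\ref{FGL0C})}$: given $(X,Y)\in\mathcal{S}_{(\ref{FGL0R})}$ with witness $H\in\partial_Zf(XY^{\mathbb{T}})$, Proposition \ref{gl-lc-sta}(ii) gives column bound-$\nu$, so $\|X_i\|\ge\nu$ whenever $i\in\mathcal{I}_X$, forcing $D^X_i=2$ and a vanishing $\Theta$-derivative in that column; reading (\ref{Rsta-incl}) columnwise on $\mathcal{I}=\mathcal{I}_X\cap\mathcal{I}_Y$ recovers (\ref{Lsta-incl}) with the same $H$. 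For $\mathcal{S}^s_{(\ref{FGL0C})}=\mathcal{S}^s_{(\ref{FGL0R})}$, ``$\supseteq$'' follows from $\mathcal{S}_{(\ref{FGL0R})}\subseteq\mathcal{S}_{(\ref{FGL0C})}$ since the two extra defining properties are literally the same for both problems; for ``$\subseteq$'', given $(X,Y)\in\mathcal{S}^s_{(\ref{FGL0C})}$ I would verify (\ref{Rsta-incl}) with the same $H$ columnwise: on $i\in\mathcal{I}=\mathcal{I}_X=\mathcal{I}_Y$ (column consistency), $\|X_i\|,\|Y_i\|\ge\nu$ give $D^X_i=D^Y_i=2$, the $\Theta$-terms drop, and the required inclusions are exactly (\ref{Lsta-incl}); on $i\notin\mathcal{I}$, $X_i=Y_i=\bm{0}\in{\rm int}\,\Omega^1_i$ (resp. ${\rm int}\,\Omega^2_i$) since $\nu<\Mbound$, so the normal cones reduce to $\{\bm{0}\}$, while $D^X_i=D^Y_i=1$ makes $\lambda\partial_{X_i}\Theta_{\nu,D^X}(X)$ the closed ball of radius $\lambda/\nu$, which contains $-HY_i$ because $\|HY_i\|\le\kappa<\lambda/\nu$ by Assumption \ref{mu-nu} (and symmetrically for $Y$); hence $(X,Y)\in\mathcal{S}_{(\ref{FGL0R})}$, and with the two extra properties $(X,Y)\in\mathcal{S}^s_{(\ref{FGL0R})}$.

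The main obstacle I anticipate is precisely this last bookkeeping: one must keep the index sets $\mathcal{I}_X$, $\mathcal{I}_Y$, $\mathcal{I}=\mathcal{I}_X\cap\mathcal{I}_Y$ straight, carry the \emph{same} subgradient $H\in\partial_Zf(XY^{\mathbb{T}})$ through every reduction, evaluate $D^X,D^Y$ and the subdifferential of $\Theta_{\nu,D}$ on support and off-support columns separately, and invoke Assumption \ref{mu-nu} at exactly the right moment so that the radius-$\lambda/\nu$ ball swallows $HY_i$ and $H^{\mathbb{T}}X_i$. Everything else is the soft observation that $F\le F_0$ with equality on column-bound-$\nu$ couples, plus compactness of $\Omega^1\times\Omega^2$.
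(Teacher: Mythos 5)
Your proposal is correct and follows essentially the same route as the paper's proof: the pointwise inequality $F\le F_0$ with equality exactly on column-bound-$\nu$ couples, Proposition \ref{gl-lc-sta} (i)--(ii) to transfer global/local optimality between the two models, Proposition \ref{FGL0C-prps} (i) and (iii) for column consistency and stationarity of global minimizers, and the columnwise verification (support columns via $D^X_i=D^Y_i=2$, off-support columns via Assumption \ref{mu-nu} so the radius-$\lambda/\nu$ ball absorbs $HY_i$ and $H^{\mathbb{T}}X_i$) to establish the chain of stationary-point inclusions. The only difference is cosmetic: the paper proves $\mathcal{G}_{(\ref{FGL0C})}\subseteq\mathcal{G}_{(\ref{FGL0R})}$ by contradiction whereas you argue directly through equality of the optimal values.
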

\begin{proof}
(i) For $(X,Y)\in\mathcal{G}_{(\ref{FGL0C})}$, if $(X,Y)\notin\mathcal{G}_{(\ref{FGL0R})}$, then there exists $(\hat{X},\hat{Y})\in\mathcal{G}_{(\ref{FGL0R})}$ such that $F_0(\hat{X},\hat{Y})=F(\hat{X},\hat{Y})<F(X,Y)$, where the first equality is due to Proposition \ref{gl-lc-sta} (i) and (ii). It follows from $\Theta_\nu\leq \text{nnzc}$ that $F(X,Y)\leq F_0(X,Y)$, which leads to a contradiction. As a result, $(X,Y)\in\mathcal{G}_{(\ref{FGL0R})}$.
	
Let $(\bar{X},\bar{Y})\in\mathcal{G}_{(\ref{FGL0R})}$ (or $\mathcal{L}_{(\ref{FGL0R})}$). Then, (there exists an $\varepsilon>0$ such that) $F(\bar{X},\bar{Y})\leq F(X,Y),\forall (X,Y)\in (\Omega^1\times\Omega^2)\cap B_\varepsilon(\bar{X},\bar{Y})$. By Proposition \ref{gl-lc-sta} (i) and (ii), we have that $(\bar{X},\bar{Y})$ own the column bound-$\nu$ property, which implies $F(\bar{X},\bar{Y})=F_0(\bar{X},\bar{Y})$. This together with $F(X,Y)\leq F_0(X,Y),\forall X,Y$ yields $F_0(\bar{X},\bar{Y})=F(\bar{X},\bar{Y}) \leq F(X,Y) \leq F_0(X,Y),\forall (X,Y)\in (\Omega^1\times\Omega^2)\cap B_\varepsilon(\bar{X},\bar{Y})$. Thus, $(\bar{X},\bar{Y})\in\mathcal{G}_{(\ref{FGL0C})}$(or $\mathcal{L}_{(\ref{FGL0C})}$) and hence $F_0(\bar{X},\bar{Y})=F(\bar{X},\bar{Y})$ by Proposition \ref{gl-lc-sta} (i) and (ii), which implies that the optimal values of (\ref{FGL0C}) and (\ref{FGL0R}) are the same.
	
(ii) Combining (i), Proposition \ref{FGL0C-prps} (i), Proposition \ref{gl-lc-sta} (i) and (ii), we get the statement (ii).
	
(iii) By Proposition \ref{FGL0C-prps} (iii) and Proposition \ref{gl-lc-sta} (i), we have $\mathcal{G}_{(\ref{FGL0C})}\subseteq\mathcal{S}_{(\ref{FGL0C})}$ and $\mathcal{G}_{(\ref{FGL0R})}\subseteq\mathcal{S}_{(\ref{FGL0R})}$. Moreover, it follows from (ii) that $\mathcal{G}_{(\ref{FGL0C})}\subseteq\mathcal{S}^s_{(\ref{FGL0C})}$ and $\mathcal{G}_{(\ref{FGL0R})}\subseteq\mathcal{S}^s_{(\ref{FGL0R})}$.
	
Suppose $(X,Y)\in\mathcal{S}_{(\ref{FGL0R})}$. By Proposition \ref{gl-lc-sta} (ii), $(X,Y)$ owns the column bound-$\nu$ property. Then, $D^{X}_i=2,\forall i\in\mathcal{I}_{X}$ and $D^{Y}_i=2,\forall i\in\mathcal{I}_{Y}$, which means $[\nabla_{X}\Theta_{\nu,D^{X}}(X)]_{\mathcal{I}_{X}}=\bm{0}\mbox{ and }[\nabla_{Y}\Theta_{\nu,D^{Y}}(Y)]_{\mathcal{I}_{Y}}=\bm{0}$. Combining with (\ref{Rsta-incl}), we get that there exists an $H\in\partial_Zf(XY^{\mathbb{T}})$ such that $\bm{0}\in HY_{\mathcal{I}_X}+N_{\Omega^1_{\mathcal{I}_X}}(X_{\mathcal{I}_X})$ and $\bm{0}\in H^{\mathbb{T}}X_{\mathcal{I}_Y}+N_{\Omega^2_{\mathcal{I}_Y}}(Y_{\mathcal{I}_Y})$, which implies that (\ref{Lsta-incl}) holds. Hence, $(X,Y)\in\mathcal{S}_{(\ref{FGL0C})}$. Further, $\mathcal{S}^s_{(\ref{FGL0R})}\subseteq\mathcal{S}^s_{(\ref{FGL0C})}$.
	
Conversely, let $(X,Y)\in\mathcal{S}^s_{(\ref{FGL0C})}$. By Definition \ref{CR-sstr}, we only need to prove $(X,Y)\in\mathcal{S}_{(\ref{FGL0R})}$. Due to its column bound-$\nu$ property, we have $\big[\nabla_{X}\Theta_{\nu,D^{X}}(X)\big]_{\mathcal{I}_{X}}=\bm{0}\mbox{ and }\big[\nabla_{Y}\Theta_{\nu,D^{Y}}(Y)\big]_{\mathcal{I}_{Y}}=\bm{0}$. Combining with (\ref{Lsta-incl}), we get that there exists $H\in\partial_Zf(XY^{\mathbb{T}})$ such that $\bm{0}\in\big[HY+\lambda\nabla_{X}\Theta_{\nu,D^{X}}(X)+N_{\Omega^1}(X)\big]_{\mathcal{I}_{X}}$ and $\bm{0}\in\big[H^{\mathbb{T}}X+\lambda\nabla_{Y}\Theta_{\nu,D^{Y}}(Y)+N_{\Omega^2}(Y)\big]_{\mathcal{I}_{Y}}$. By Assumption \ref{mu-nu}, we have $\bm{0}\in\big[HY+\lambda\partial_{X}\Theta_{\nu,D^{X}}(X)+N_{\Omega^1}(X)\big]_{\mathcal{I}^c_{X}}$ and $\bm{0}\in\big[H^{\mathbb{T}}X+\lambda\partial_{Y}\Theta_{\nu,D^{Y}}(Y)+N_{\Omega^2}(Y)\big]_{\mathcal{I}^c_{Y}}$. As a result, (\ref{Rsta-incl}) holds and then $(X,Y)\in\mathcal{S}_{(\ref{FGL0R})}$.
\end{proof}
\begin{remark}
Recall from Proposition \ref{FGL0C-prps} (i) and Remark \ref{lc-pr-id}, any global minimizer of (\ref{FGL0C}) owns the column consistency property, but some local minimizers may not. Similar to Remark \ref{lc-pr-id}, under the condition $|\mathcal{I}_{X}\cap\mathcal{I}_{Y}|<d$, it is not hard to construct a local minimizer of (\ref{FGL0C}) without the column bound-$\nu$ property. This combined with Theorem \ref{stat-loc} (ii) yields that the column consistency property and column bound-$\nu$ property are necessary conditions for the global minimizers of problem (\ref{FGL0C}), but not for its non-global local minimizers. Moreover, continuing the same assumption $|\mathcal{I}_{X}\cap\mathcal{I}_{Y}|<d$ for a given stationary point $(X,Y)$ of (\ref{FGL0C}), we define $(U,V)\in\Omega^1\times\Omega^2$ by $U_{\mathcal{I}}=X_{\mathcal{I}}$, $U_{\mathcal{I}^c}=\bm{0}$, $V_{\mathcal{I}}=Y_{\mathcal{I}}$ and $V_{\mathcal{I}^c}=\bm{1}$ with $\mathcal{I}=\mathcal{I}_{X}\cap\mathcal{I}_{Y}$. It follows that $(U,V)\in\mathcal{S}_{(\ref{FGL0C})}$ but $(U,V)\notin\mathcal{S}^s_{(\ref{FGL0C})}$, which means $\mathcal{S}^s_{(\ref{FGL0C})}\varsubsetneq\mathcal{S}_{(\ref{FGL0C})}$.
\end{remark}
\begin{remark}
In problem (\ref{FGL0R}), the relaxation $\theta_\vartheta$ is defined by the capped-$\ell_1$ function on $[0,+\infty)$. Based on Theorem \ref{stat-loc} (i) and similar  analysis of \cite[Proposition 5]{LeThi2015}, we can also establish the equivalence on the global minimizers between problems (\ref{FGL0C}) and (\ref{FGL0R}) when $\theta_\vartheta$ in (\ref{FGL0R}) is replaced by the transformed SCAD or MCP function. However, due to the quadratic non-convex part of SCAD and MCP functions, it is difficult to establish the equivalence on their strong stationary points. In addition, for problems (\ref{FGL0C}) and (\ref{FGL0R}), it is worthwhile to delve into some properties of global minimizers and introduce another type of ``strong stationary point'', which is potentially helpful to  exclude some non-global stationary points. One avenue to pursue this enhancement is by incorporating second-order necessary optimality conditions and it is worthy of further study in the future. Moreover,  Theorem \ref{stat-loc} shows that the (strong) stationary point set of (\ref{FGL0R}) with capped-$\ell_1$ relaxation contains the global optimal solution set of problem (\ref{FGL0C}). In contrast, if $\theta_\rho$ is replaced by a convex relaxation instead of capped-$\ell_1$, the optimal solution set of the convex relaxation is not theoretically guaranteed to contain the global optimal solution set of  (\ref{FGL0C})  in general.
\end{remark}
In this section, problem (\ref{FGL0C}) and its relaxation problem (\ref{FGL0R}) are proved to have the relationships as shown in Fig.\,\ref{rela}. Finally, we discuss the relationships between some stationary points of (\ref{FGL0R}) under the DC formulation of it as follows.
\begin{figure}
\centering
\includegraphics[width=5.1in]{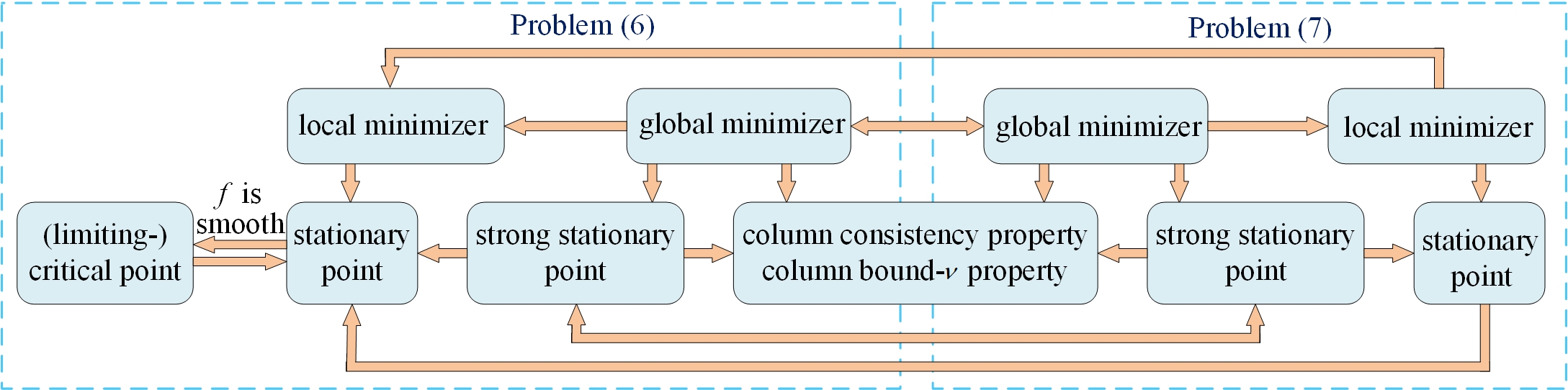}
\caption{Relationships between problem (\ref{FGL0C}) and its relaxation problem (\ref{FGL0R}) with $\nu$ satisfying Assumption \ref{mu-nu}.}\label{rela}
\end{figure}
\begin{remark}
When the loss function $f$ in (\ref{FGL0R}) satisfies the following condition: 
\begin{equation}\label{dc_cas}
\begin{split}
&\text{$f$ is smooth and }\text{there exist a strongly convex smooth function $\phi(X,Y)$, $L>0$}\\
&\text{and $l\geq0$ such that $L\phi(X,Y)-f(XY^{\mathbb{T}})$ and $f(XY^{\mathbb{T}})+l\phi(X,Y)$ are convex,}
\end{split}
\end{equation}
a DC algorithm has been proposed in \cite{LeThi2021arxiv} to obtain the critical points of an equivalent DC form of (\ref{FGL0R}). In particular, the function $f(Z)=\dfrac{1}{2}\Vert \mathcal{P}_{\varGamma}(Z-\bar{Z}^\diamond)\Vert_F^2$ in (\ref{lstsql}) fulfills (\ref{dc_cas}). For $\vartheta>0$ and $I\in[2]^d$, define $\bar\Theta_\vartheta(C):=\sum_{i=1}^d\bar\theta_\vartheta(\|C_i\|)\;\text{and}\;\bar\Theta_{\vartheta,I}(C):=\sum_{i=1}^d\bar{\theta}_{\vartheta,I_i}(\|C_i\|),\forall C\in\mathbb{R}^{r\times d}$ with $\bar\theta_{\vartheta,1}(t):=0$, $\bar\theta_{\vartheta,2}(t):=t/\vartheta-1$ and $\bar{\theta}_\vartheta(t):=\max_{j\in[2]}\{\bar\theta_{\vartheta,j}(t)\},\forall t\geq0$. It follows $\Theta_\vartheta(C)=\frac{1}{\vartheta}\sum_{i=1}^d\|C_i\|-\bar\Theta_\vartheta(C)$. Then, combining with the convexity of $f(XY^{\mathbb{T}})+l\phi(X,Y)$, problem (\ref{FGL0R}) with $f$ satisfying (\ref{dc_cas}) can be equivalently transformed into the following DC problem,
\begin{equation}\label{FGL0R-dc}
\min_{(X,Y)\in\Omega^1\times\Omega^2}f(XY^{\mathbb{T}})+l\phi(X,Y)+\lambda\sum_{i=1}^d(\|X_i\|+\|Y_i\|)-l\phi(X,Y)-\lambda\big(\bar\Theta_\nu(X)+\bar\Theta_\nu(Y)\big).
\end{equation}
Note that $\Theta_{\vartheta,I}(C)=\frac{1}{\vartheta}\sum_{i=1}^d\|C_i\|-\bar\Theta_{\vartheta,I}(C),\forall\vartheta>0$ and $I\in[2]^d$. Moreover, $\bar{\Theta}_{\nu,D^X}$ and $\bar{\Theta}_{\nu,D^Y}$ are smooth. Then we deduce that (\ref{Rsta-incl}) is equivalent to
\begin{equation*}
\bm{0}\in\partial\big(f(XY^{\mathbb{T}})+\frac{\lambda}{\vartheta}\sum_{i=1}^d(\|X_i\|+\|Y_i\|)\big)-\lambda\nabla\big(\bar\Theta_{\nu,D^{X}}(X)+\bar\Theta_{\nu,D^{Y}}(Y)\big),
\end{equation*}
with $D^X$ and $D^Y$  defined as in Definition \ref{Rsta}. This combined with $\nabla\big(\bar\Theta_{\nu,D^{X}}(X)+\bar\Theta_{\nu,D^{Y}}(Y)\big)\subseteq\partial\big(\bar\Theta_\nu(X)+\bar\Theta_\nu(Y)\big)$ and the model equivalence between (\ref{FGL0R}) and (\ref{FGL0R-dc}) implies that a strong stationary point $(X,Y)$ of (\ref{FGL0R}) owns  stronger optimality condition than the d-stationary point defined in \cite{Pang2017} when $\|X_i\|\neq\nu$ and $\|Y_i\|\neq\nu,\forall i\in[d]$, and the critical point used in \cite{LeThi2021arxiv} for (\ref{FGL0R-dc}). Note that a strong stationary point of (\ref{FGL0C}) owns the same optimality condition as that of (\ref{FGL0R}) by Theorem \ref{stat-loc} (iii).
\end{remark}

\subsection{The recovery of the rank of the target matrix}
In this part, inspired by \cite[Proposition 2.6]{Pan2022factor}, we show that some matrix couples can attain the same rank $r^\star\,(\leq d)$ as the target matrix $Z^\diamond$ under some assumptions related to $f$ and $\lambda$ for problem (\ref{FGL0C}). This result encompasses the associated (strong) stationary points and ensures the consistency of $\text{rank}(Z^\diamond)$ and $\text{rank}(XY^\mathbb{T})$ for any global minimizer $(X,Y)$ of problem (\ref{FGL0C}). In the following, we define the $2r^\star$-restricted smallest eigenvalue $\alpha$ of operator $\mathcal{A}:\mathbb{R}^{m\times n}\rightarrow\mathbb{R}^p$ as $\alpha=\min\{\|\mathcal{A}(Z)\|^2/\|Z\|_F^2:Z\neq\bm{0}\text{ and }\text{rank}(Z)\leq2r^\star\}$.

\begin{proposition}\label{rank-consis}
Let $f(Z):=h(\mathcal{A}(Z)-b)$, where $h:\mathbb{R}^p\rightarrow\mathbb{R}_+$ is a differentiable $\rho$-strongly convex function, $\mathcal{A}:\mathbb{R}^{m\times n}\rightarrow\mathbb{R}^p$ is a linear operator, and $b\in\mathbb{R}^p$ is a given vector that is dependent on $Z^\diamond$. Let $M_{r^\star}$ be a projection of $\mathcal{A}^*(b)$ onto the rank-$r^\star$ constraint set. Suppose the $2r^\star$-restricted smallest eigenvalue $\alpha$ of $\mathcal{A}$ satisfies 
$\alpha{\rho\sigma_{r^\star}(M_{r^\star})}\geq {(\sqrt{2}+1)\|\mathcal{A}^*(\nabla h(\mathcal{A}(M_{r^\star})-b))\|_F}$ and 
$\lambda$ in (\ref{FGL0C}) fulfills $\lambda\in\big[f(M_{r^\star}),\frac{\rho\alpha}{8}\big(\sigma_{r^\star}(M_{r^\star})-\frac{1}{\rho\alpha}\|\mathcal{A}^*(\nabla h(\mathcal{A}(M_{r^\star})-b)\big)\|)^2\big)$. Then, any matrix couple $(\bar{X},\bar{Y})$ with $F_0(\bar{X},\bar{Y})\leq f(M_{r^\star})+2\lambda r^\star$ satisfies that $\emph{\text{rank}}(\bar{X}\bar{Y}^\mathbb{T})=r^\star$.
\end{proposition}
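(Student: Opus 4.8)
The plan is to write $\bar Z:=\bar X\bar Y^{\mathbb T}$, $s:=\sigma_{r^\star}(M_{r^\star})$, $g:=\|\mathcal A^*(\nabla h(\mathcal A(M_{r^\star})-b))\|_F$, and to rule out $\text{rank}(\bar Z)>r^\star$ and $\text{rank}(\bar Z)<r^\star$ in turn. Since $\text{rank}(\bar Z)\le\frac12(\text{nnzc}(\bar X)+\text{nnzc}(\bar Y))$, the hypothesis $F_0(\bar X,\bar Y)\le f(M_{r^\star})+2\lambda r^\star$ immediately gives the key inequality
\[
f(\bar Z)+2\lambda\,\text{rank}(\bar Z)\ \le\ f(M_{r^\star})+2\lambda r^\star ,
\]
and I will also use repeatedly that $f=h\circ(\mathcal A(\cdot)-b)\ge 0$ because $h$ is $\mathbb R_+$-valued.

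\emph{The bound $\text{rank}(\bar Z)\le r^\star$.} If instead $\text{rank}(\bar Z)\ge r^\star+1$, then discarding the nonnegative term $f(\bar Z)$ in the key inequality yields $2\lambda(r^\star+1)\le f(M_{r^\star})+2\lambda r^\star$, i.e.\ $2\lambda\le f(M_{r^\star})$. Since $\lambda\ge f(M_{r^\star})$ (the left endpoint of the admissible interval), this forces $2\lambda\le\lambda$, contradicting $\lambda>0$.

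\emph{The bound $\text{rank}(\bar Z)\ge r^\star$.} Suppose $k:=\text{rank}(\bar Z)<r^\star$ and set $j:=r^\star-k\ge 1$. Then $\text{rank}(\bar Z-M_{r^\star})\le k+r^\star\le 2r^\star$, so the definition of the $2r^\star$-restricted smallest eigenvalue gives $\|\mathcal A(\bar Z-M_{r^\star})\|^2\ge\alpha\|\bar Z-M_{r^\star}\|_F^2$. Applying $\rho$-strong convexity of $h$ at the points $\mathcal A(\bar Z)-b$ and $\mathcal A(M_{r^\star})-b$, rewriting the linear term via the adjoint as $\langle\mathcal A^*\nabla h(\mathcal A(M_{r^\star})-b),\bar Z-M_{r^\star}\rangle$ and estimating it by Cauchy--Schwarz, I get, with $t:=\|\bar Z-M_{r^\star}\|_F$,
\[
f(\bar Z)\ \ge\ f(M_{r^\star})-g\,t+\tfrac{\rho\alpha}{2}\,t^2 .
\]
By the Eckart--Young--Mirsky theorem, no matrix of rank $\le k$ can approximate $M_{r^\star}$ more closely than $t^2\ge\sum_{i=k+1}^{r^\star}\sigma_i(M_{r^\star})^2\ge j\,s^2$, so $t\ge s\sqrt{j}\ge s$. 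The restricted-eigenvalue hypothesis $\rho\alpha s\ge(\sqrt{2}+1)g$ implies $s>g/(\rho\alpha)$, hence $u\mapsto\frac{\rho\alpha}{2}u^2-g\,u$ is increasing on $[s\sqrt{j},\infty)$; evaluating at $u=s\sqrt{j}$ gives $f(\bar Z)\ge f(M_{r^\star})+\frac{\rho\alpha}{2}j\,s^2-g\,s\sqrt{j}$. Plugging this into the key inequality, cancelling $2\lambda k$ and dividing by $j>0$, I obtain $\lambda\ge\frac{\rho\alpha}{4}s^2-\frac{g\,s}{2\sqrt{j}}\ge\frac{\rho\alpha}{4}s^2-\frac{g\,s}{2}$. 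On the other hand, a short computation shows that $\rho\alpha s\ge(\sqrt{2}+1)g$ is exactly equivalent to $\frac{\rho\alpha}{8}\bigl(s-\frac{g}{\rho\alpha}\bigr)^2\le\frac{\rho\alpha}{4}s^2-\frac{g\,s}{2}$ (after clearing denominators both statements read $(\rho\alpha s)^2-2g(\rho\alpha s)-g^2\ge 0$). Therefore the assumed upper bound on $\lambda$ gives $\lambda<\frac{\rho\alpha}{8}(s-\frac{g}{\rho\alpha})^2\le\frac{\rho\alpha}{4}s^2-\frac{g\,s}{2}\le\lambda$, a contradiction. Hence $\text{rank}(\bar Z)\ge r^\star$, and combined with the previous bound $\text{rank}(\bar X\bar Y^{\mathbb T})=r^\star$.

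The crux --- and what I expect to be the main obstacle --- is the final calibration: one must lower-bound $t$ by $s\sqrt{j}$ rather than merely by $s$, so that dividing by $j$ removes all dependence on $k$, and then verify that the lower bound on $\lambda$ forced by strong convexity always exceeds the admissible upper bound; this is precisely the role of the constant $\sqrt{2}+1$ in the eigenvalue condition. The remaining points to watch are that $\text{rank}(\bar Z-M_{r^\star})\le 2r^\star$ so that $\alpha$ may legitimately be invoked, that $s>0$ (otherwise the $\lambda$-interval is empty and the claim is vacuous), and that every division performed is by a strictly positive quantity.
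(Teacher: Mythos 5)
Your architecture matches the paper's: the bound $\text{rank}(\bar X\bar Y^{\mathbb T})\le r^\star$ follows from $\lambda\ge f(M_{r^\star})$, $f\ge 0$ and $\text{rank}(XY^{\mathbb T})\le\frac12(\text{nnzc}(X)+\text{nnzc}(Y))$ exactly as in the paper, and the exclusion of $\text{rank}<r^\star$ rests on the same three ingredients (strong convexity, the $2r^\star$-restricted eigenvalue, an Eckart--Young estimate); your observation that $t\ge s\sqrt{j}$ lets one divide by $j$ is also how the paper removes the dependence on $\bar r$. The genuine gap is in the final calibration. Write $G:=\mathcal A^*(\nabla h(\mathcal A(M_{r^\star})-b))$ and keep your notation $s=\sigma_{r^\star}(M_{r^\star})$, $g=\|G\|_F$. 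You assert that the hypothesis gives $\lambda<\frac{\rho\alpha}{8}(s-\frac{g}{\rho\alpha})^2$, but the stated upper endpoint of the $\lambda$-interval is $\frac{\rho\alpha}{8}\bigl(s-\frac{1}{\rho\alpha}\|G\|\bigr)^2$ with the unsubscripted norm, which in this paper is the \emph{spectral} norm of the matrix $G$; since $\|G\|\le\|G\|_F$ this endpoint is the \emph{larger} of the two, so the stated range of $\lambda$ does not imply $\lambda<\frac{\rho\alpha}{8}(s-\frac{g}{\rho\alpha})^2$ and your final chain does not close for $\lambda$ lying between the two endpoints. The inequality you would need, $\frac{\rho\alpha}{8}(s-\frac{\|G\|}{\rho\alpha})^2\le\frac{\rho\alpha}{4}s^2-\frac{gs}{2}$, can genuinely fail under $\rho\alpha s\ge(\sqrt2+1)g$: taking $g=2\|G\|$ and $\rho\alpha s=(\sqrt2+1)g$, it reduces to $(\rho\alpha s)^2-6\|G\|(\rho\alpha s)-\|G\|^2\ge0$, i.e.\ $\rho\alpha s\ge(3+\sqrt{10})\|G\|\approx 6.16\,\|G\|$, whereas one only has $\rho\alpha s\approx 4.83\,\|G\|$. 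You have therefore proved the proposition only for the smaller range $\lambda<\frac{\rho\alpha}{8}(s-\frac{g}{\rho\alpha})^2$.

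The device you are missing is the paper's completion of the square. Instead of bounding the cross term by Cauchy--Schwarz and applying Eckart--Young to $M_{r^\star}$, the paper writes $\langle G,\bar Z-M_{r^\star}\rangle+\frac{\rho\alpha}{2}\|\bar Z-M_{r^\star}\|_F^2=\frac{\rho\alpha}{2}\|\bar Z-\bar M\|_F^2-\frac{1}{2\rho\alpha}\|G\|_F^2$ with $\bar M=M_{r^\star}-(\rho\alpha)^{-1}G$, applies Eckart--Young to the shifted matrix $\bar M$, and then controls $\sigma_{r^\star}(\bar M)\ge\sigma_{r^\star}(M_{r^\star})-(\rho\alpha)^{-1}\|G\|$ by Weyl's singular-value perturbation inequality --- this is exactly where the spectral norm enters the admissible $\lambda$-range, while the hypothesis $\rho\alpha s\ge(\sqrt2+1)\|G\|_F$ is spent on absorbing the leftover $-\frac{1}{2\rho\alpha}\|G\|_F^2$. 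Your monotonicity-of-the-quadratic argument is a clean substitute for part of this, but it cannot recover the spectral-norm endpoint; if the proposition's upper bound were read with $\|G\|_F$ in place of $\|G\|$, your proof would be complete as written.
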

\begin{proof}
Let $(\bar{X},\bar{Y})$ satisfy $F_0(\bar{X},\bar{Y})\leq f(M_{r^\star})+2\lambda r^\star$. By $h\geq0$, $\lambda\geq f(M_{r^\star})$ and (\ref{rank-rel}), we deduce that
\begin{equation*}
2\lambda\,\text{rank}(\bar{X}\bar{Y}^\mathbb{T})\leq \lambda(\text{nnzc}(\bar{X})+\text{nnzc}(\bar{Y}))\leq F_0(\bar{X},\bar{Y})\leq f(M_{r^\star})+2\lambda r^\star\leq\lambda+2\lambda r^\star.
\end{equation*}
Then, $\text{rank}(\bar{X}\bar{Y}^\mathbb{T})\leq r^\star+\frac{1}{2}$, which implies that $\text{rank}(\bar{X}\bar{Y}^\mathbb{T})\leq r^\star$.
	
Next, we only need to prove that $\text{rank}(\bar{X}\bar{Y}^\mathbb{T})<r^\star$ does not hold to obtain the statement that 
$\text{rank}(\bar{X}\bar{Y}^\mathbb{T})=r^\star$. Let $\bar{r}:=\text{rank}(\bar{X}\bar{Y}^\mathbb{T})$ and suppose $\bar{r}<r^\star$. Since $h$ is a differentiable $\rho$-strongly convex function and $\alpha$  is the $2r^\star$-restricted smallest eigenvalue of $\mathcal{A}$, we get
\begin{equation}\label{true_rank}
\begin{split}
&f(\bar{X}\bar{Y}^\mathbb{T})-f(M_{r^\star})\\
\geq&\,\langle\nabla h(\mathcal{A}(M_{r^\star})-b),\mathcal{A}(\bar{X}\bar{Y}^\mathbb{T}-M_{r^\star})\rangle+\frac{\rho}{2}\|\mathcal{A}(\bar{X}\bar{Y}^\mathbb{T}-M_{r^\star})\|^2
\\
\geq&\,\langle\mathcal{A}^*(\nabla h(\mathcal{A}(M_{r^\star})-b)),\bar{X}\bar{Y}^\mathbb{T}-M_{r^\star}\rangle+\frac{\rho\alpha}{2}\|\bar{X}\bar{Y}^\mathbb{T}-M_{r^\star}\|_F^2\\
=&\,\frac{\rho\alpha}{2}\|\bar{X}\bar{Y}^\mathbb{T}-\bar{M}\|_F^2-\frac{1}{2\rho\alpha}\|\mathcal{A}^*(\nabla h(\mathcal{A}(M_{r^\star})-b))\|_F^2\\
\geq&\,\frac{\rho\alpha}{2}\min_{\text{rank}(Z)\leq\bar{r}}\|Z-\bar{M}\|_F^2-\frac{1}{2\rho\alpha}\|\mathcal{A}^*(\nabla h(\mathcal{A}(M_{r^\star})-b))\|_F^2\\
=&\,\frac{\rho\alpha}{2}\sum_{i=\bar{r}+1}^n\sigma_i^2(\bar{M})-\frac{1}{2\rho\alpha}\|\mathcal{A}^*(\nabla h(\mathcal{A}(M_{r^\star})-b))\|_F^2\\
\geq&\,\frac{\rho\alpha}{2}(r^\star-\bar{r})\sigma_{r^\star}^2(\bar{M})-\frac{1}{2\rho\alpha}\|\mathcal{A}^*(\nabla h(\mathcal{A}(M_{r^\star})-b))\|_F^2,
\end{split}
\end{equation}
where $\bar{M}=M_{r^\star}-(\rho\alpha)^{-1}\mathcal{A}^*(\nabla h(\mathcal{A}(M_{r^\star})-b))$. By the definition of $\bar{M}$, we have that
$\sigma_{r^\star}(\bar{M}) \geq \sigma_{r^\star}(M_{r^\star}) - (\rho\alpha)^{-1}
\|\mathcal{A}^*(\nabla h(\mathcal{A}(M_{r^\star})-b))\|.$
This, together with the selection range of $\lambda$, we can deduce that $\frac{\rho\alpha}{4}(r^\star-\bar{r})\sigma^2_{r^\star}(\bar{M})\geq\frac{\rho\alpha}{4}(r^\star-\bar{r})\big[\sigma_{r^\star}(M_{r^\star})-(\rho\alpha)^{-1}\|\mathcal{A}^*(\nabla h(\mathcal{A}(M_{r^\star})-b))\|\big]^2>2\lambda(r^\star-\bar{r})$. 
Moreover, due to $\alpha\geq\frac{(\sqrt{2}+1)\|\mathcal{A}^*(\nabla h(\mathcal{A}(M_{r^\star})-b))\|_F}{\rho\sigma_{r^\star}(M_{r^\star})}$, 
we have $\frac{\rho\alpha}{4}(r^\star-\bar{r})
\sigma_{r^\star}^2(\bar{M})-\frac{1}{2\rho\alpha}\|\mathcal{A}^*(\nabla h(\mathcal{A}(M_{r^\star})-b))\|_F^2\geq0$. Combining with (\ref{true_rank}), we obtain $f(\bar{X}\bar{Y}^\mathbb{T})-f(M_{r^\star})>2\lambda(r^\star-\bar{r})$. 
It follows that $F_0(\bar{X},\bar{Y})\geq f(\bar{X}\bar{Y}^\mathbb{T})+2\lambda\bar{r}>f(M_{r^\star})+2\lambda r^\star$, which contradicts the assumption 
that 
$F_0(\bar{X},\bar{Y})\leq f(M_{r^\star})+2\lambda r^\star$. As a result, we get $\text{rank}(\bar{X}\bar{Y}^\mathbb{T})=r^\star$.
\end{proof}

Under the mild assumption that $\{Z\in\mathcal{G}_{(\ref{LRMPR})}:\|Z\|\leq\varsigma^2\}\neq\emptyset$ in Proposition \ref{bd-rela} (iii), we have that ${\text{nnzc}}(X)={\text{nnzc}}(Y)=\text{rank}(XY^\mathbb{T})$ and $XY^\mathbb{T}\in\mathcal{G}_{(\ref{LRMPR})},\,\forall(X,Y)\in\mathcal{G}_{(\ref{FGL0C})}$ by Proposition \ref{bd-rela} (iii) and Proposition \ref{FGL0-prps} (i). It follows from Proposition \ref{rank-consis} that $\text{rank}(XY^\mathbb{T})=r^\star,\forall(X,Y)\in\mathcal{G}_{(\ref{FGL0C})}$. Therefore, the condition in Proposition \ref{bd-rela} (iii) and Proposition \ref{rank-consis} can guarantee the consistency of $\text{rank}(Z^\diamond)$ and $\text{rank}(XY^\mathbb{T})$ for any $(X,Y)\in\mathcal{G}_{(\ref{FGL0C})}$.

\section{Algorithms}\label{section5}

Throughout this section, we assume that $f$ is differentiable and its gradient $\nabla f$ is Lipschitz continuous on $\{XY^{\mathbb{T}}:(X,Y)\in\Omega^1\times\Omega^2\}$ with modulus $L_f$. We propose an alternating proximal gradient algorithm and prove its  convergence to a strong stationary point of problem (\ref{FGL0C}). Different from the PALM algorithm in \cite{Bolte2014Proximal}, its inertial version in \cite{SabachSIIMS2016} and the BPL algorithm in \cite{YinJSC2017} for problems (\ref{FGL0C}) and (\ref{FGL0R}), the proximal operators in our algorithm are updated step by step. Based on this, we can obtain  higher quality stationary points in theory. In addition, we are also interested in giving further theoretical results based on the PALM algorithm for (\ref{FGL0C}). So we make some improvements on the PALM algorithm and give some progress on the relevant  properties of its iterates. In particular, we also incorporate a dimension reduction technique without affecting the theoretical results for the proposed algorithms.

Based on Proposition \ref{FGL0C-prps} and Proposition \ref{gl-lc-sta}, it holds that for problems (\ref{FGL0C}) and (\ref{FGL0R}), the operation of (\ref{rdc-elm}) on any local minimizer (stationary point) also gives a local minimizer (stationary point) with the column consistency property from the global minimizers. This inspires us to incorporate this process into the algorithms and explore the impact on their generated iterates. So, we specially define the operation in (\ref{rdc-elm}) as follows.
\begin{definition}
For any $(X,Y)\in\mathbb{R}^{m\times d}\times\mathbb{R}^{n\times d}$, we define function $S:\mathbb{R}^{m\times d}\times\mathbb{R}^{n\times d}\rightarrow\mathbb{R}^{m\times d}\times\mathbb{R}^{n\times d}$ by 
\begin{equation*}
S(X,Y)=(X^s,Y^s),
\end{equation*}
with $(X^s,Y^s)$ defined as in (\ref{rdc-elm}), and call $S(X,Y)$ the sparsity couple of $(X,Y)$.
\end{definition}
In subsequent algorithms, we refer to the use of the function $S$ as sparsity consistency processing.

\subsection{Alternating proximal gradient algorithm with adaptive indicator and line search for problem \eqref{FGL0C}
}
\label{th_rs_ag1}

We design an algorithm as shown in Algorithm \ref{FGL0alg1} (Alg.\,\ref{FGL0alg1}) for solving problem (\ref{FGL0C}) through the relaxation problem \eqref{FGL0R} based on their equivalence
of strong stationary points.

Note that when $k\geq K$, it holds that $I^k=D^{X^k}$ and $J^k=D^{Y^k}$, and when $\mathcal{K}=\emptyset$, we have $(X^{k+1},Y^{k+1})=(\bar{X}^{k+1},\bar{Y}^{k+1}),\forall k\geq 0$. Moreover, we use the line search method in each step to improve the convergence rate, which will be verified in later numerical experiments.
\begin{algorithm}[h!]
\caption{({\bf{Alg.\,\ref{FGL0alg1}}})
An alternating proximal gradient algorithm with adaptive indicator and line search for problem  (\ref{FGL0C})}
\label{FGL0alg1}
{\bf Initialization:} Choose $(X^0,Y^0)\in\Omega^1\times\Omega^2$, $0<c_{\min}<c_{\max}$, $0<\underline{\iota}<\overline{\iota}$, $\rho>1$, $\mathcal{K}\subseteq\mathbb{N}$ and $K\in\mathbb{N}$. Let $\{\nu_k\}$ be positive sequences satisfying
\begin{equation*}
\nu_k=\nu,\forall k\geq K.
\end{equation*}
Set $k=0$.
	
{\bf Step 1:} Let $I^{k}\in\{1,2\}^d$ with $I^{k}_i:=\max\{j\in\{1,2\}:\theta_{\nu_k,j}(\|X^k_i\|)=\theta_{\nu_k}(\|X^k_i\|)\},\forall i\in[d]$. Choose $\iota_{1,k}^B\in[\underline{\iota},\overline{\iota}]$ and $c_{1,k}\in [c_{\min},c_{\max}]$.
For $l=1,2,\ldots$, do\\
(1a) let $\iota_{1,k}=\iota_{1,k}^B\rho^{l-1}$;\\
(1b) compute ${\bar{X}}^{k+1}(\iota_{1,k})$ by the following problem,
\begin{equation}\label{FGL0sub1}
\begin{split}
\bar{X}^{k+1}(\iota_{1,k})=\,{{\rm{arg}}\min}_{X\in\Omega^1}S_{1,k}(X;\iota_{1,k}):=\,&f(X^k{Y^k}^{\mathbb{T}})\\&+\langle\nabla_Zf(X^k{Y^k}^{\mathbb{T}}){Y^k},X-X^k\rangle\\&+\frac{\iota_{1,k}}{2}\|X-X^k\|_F^2+\lambda\Theta_{\nu_k,I^k}(X);
\end{split}
\end{equation}
(1c) if ${\bar{X}}^{k+1}(\iota_{1,k})$ satisfies
\begin{equation}\label{objXineq}
\begin{split}
f({\bar{X}}^{k+1}(\iota_{1,k}){Y^k}^{\mathbb{T}})+\lambda\Theta_{\nu_k}({\bar{X}}^{k+1}(\iota_{1,k}))\leq\, &f(X^k{Y^k}^{\mathbb{T}})+\lambda\Theta_{\nu_k}(X^k)\\&-c_{1,k}\|{\bar{X}}^{k+1}(\iota_{1,k})-X^k\|_F^2,
\end{split}
\end{equation}
set ${\bar{X}}^{k+1}={\bar{X}}^{k+1}(\iota_{1,k})$, $\bar\iota_{1,k}=\iota_{1,k}$ and go to Step 2.\\
{\bf Step 2:} Let $J^k\in\{1,2\}^d$ with $J^{k}_i:=\max\{j\in\{1,2\}:\theta_{\nu_k,j}(\|Y^k_i\|)=\theta_{\nu_k}(\|Y^k_i\|)\},\forall i\in[d]$. Choose $\iota_{2,k}^B\in[\underline{\iota},\overline{\iota}]$ and $c_{2,k}\in[c_{\min},c_{\max}]$.
For $l=1,2,\ldots$, do\\
(2a) let $\iota_{2,k}=\iota_{2,k}^B\rho^{l-1}$;\\
(2b) compute ${\bar{Y}}^{k+1}(\iota_{2,k})$ by the following problem,
\begin{equation}\label{FGL0sub2}
\begin{split}
\bar{Y}^{k+1}(\iota_{2,k})=\,{{\rm{arg}}\min}_{Y\in\Omega^2} S_{2,k}(Y;\iota_{2,k}):=\,&f(\bar{X}^{k+1}{Y^k}^{\mathbb{T}})\\&+\langle\nabla_Zf(\bar{X}^{k+1}{Y^k}^{\mathbb{T}})^{\mathbb{T}}\bar{X}^{k+1},Y-Y^k\rangle\\&+\frac{\iota_{2,k}}{2}\|Y-Y^k\|_F^2+\lambda\Theta_{\nu_k,J^k}(Y);
\end{split}
\end{equation}
(2c) if ${\bar{Y}}^{k+1}(\iota_{2,k})$ satisfies
\begin{equation}\label{objYineq}
\begin{split}
f({\bar{X}}^{k+1}({\bar{Y}}^{k+1}(\iota_{2,k}))^{\mathbb{T}})+\lambda\Theta_{\nu_k}({\bar{Y}}^{k+1}(\iota_{2,k}))\leq\, &f({\bar{X}}^{k+1}{Y^k}^{\mathbb{T}})+\lambda\Theta_{\nu_k}(Y^k)\\&-c_{2,k}\|{\bar{Y}}^{k+1}(\iota_{2,k})-Y^k\|_F^2,
\end{split}
\end{equation}
set ${\bar{Y}}^{k+1}={\bar{Y}}^{k+1}(\iota_{2,k})$, $\bar\iota_{2,k}=\iota_{2,k}$ and go to Step 3.\\
{\bf Step 3:} Let $(X^{k+1},Y^{k+1})=S(\bar{X}^{k+1},\bar{Y}^{k+1})$ if $k\in\mathcal{K}$, and $(X^{k+1},Y^{k+1})=(\bar{X}^{k+1},\bar{Y}^{k+1})$ otherwise. Update $k\leftarrow k+1$ and return to Step 1.
\end{algorithm}

First, we analyze some basic properties of Alg.\,\ref{FGL0alg1}, which will be used in its forthcoming convergence analysis.
\begin{proposition}\label{limprop}
Let $\{(X^k,Y^k)\}$ be the iteration sequence generated by Alg.\,\ref{FGL0alg1}. Then,
\begin{itemize}
\item[(i)] $\bar{\iota}_{1,k}\leq\max\{\underline\iota,2\rho c_{1,k},\rho L_f\|Y^k\|^2\}$ and $\bar{\iota}_{2,k}\leq\max\{\underline\iota,2\rho c_{2,k},\rho L_f\|\bar{X}^{k+1}\|^2\}$;
\item[(ii)] $\lim_{k\rightarrow\infty}F(X^k,Y^k)$ exists and $\lim_{k\rightarrow\infty}\|\bar{X}^{k+1}-X^k\|_F=\lim_{k\rightarrow\infty}\|\bar{Y}^{k+1}-Y^k\|_F=0$.
\end{itemize}
\end{proposition}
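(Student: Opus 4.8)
The plan is to establish part (i) from the line-search exit conditions, then use (i) to derive the sufficient-decrease inequalities that drive part (ii). For (i), I would argue that the backtracking in Step~1 must terminate, and bound the accepted value $\bar\iota_{1,k}$. The key observation is that $S_{1,k}(X;\iota_{1,k})$ majorizes $f(X{Y^k}^{\mathbb{T}})+\lambda\Theta_{\nu_k,I^k}(X)$ up to the proximal term whenever $\iota_{1,k}\geq L_f\|Y^k\|^2$, because $X\mapsto f(X{Y^k}^{\mathbb{T}})$ has $\nabla$-Lipschitz constant $L_f\|Y^k\|^2$ on $\Omega^1$ (composition of the $L_f$-Lipschitz $\nabla f$ with the linear map $X\mapsto X{Y^k}^{\mathbb{T}}$). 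Since $\bar X^{k+1}(\iota_{1,k})$ minimizes $S_{1,k}(\cdot;\iota_{1,k})$ over $\Omega^1$, comparing its value against that at $X^k$ and using $\Theta_{\nu_k}\leq\Theta_{\nu_k,I^k}$ together with $\Theta_{\nu_k}(X^k)=\Theta_{\nu_k,I^k}(X^k)$ (by the definition of $I^k$), one gets
\[
f(\bar X^{k+1}(\iota_{1,k}){Y^k}^{\mathbb{T}})+\lambda\Theta_{\nu_k}(\bar X^{k+1}(\iota_{1,k}))\leq f(X^k{Y^k}^{\mathbb{T}})+\lambda\Theta_{\nu_k}(X^k)-\tfrac{\iota_{1,k}-L_f\|Y^k\|^2}{2}\|\bar X^{k+1}(\iota_{1,k})-X^k\|_F^2,
\]
so \eqref{objXineq} holds as soon as $\iota_{1,k}/2 - L_f\|Y^k\|^2/2 \geq c_{1,k}$, i.e. $\iota_{1,k}\geq 2c_{1,k}+L_f\|Y^k\|^2$. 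Since the trial values grow geometrically from $\iota_{1,k}^B\geq\underline\iota$ by factor $\rho$, the first accepted value is at most $\max\{\underline\iota,\rho(2c_{1,k}+L_f\|Y^k\|^2)\}\leq\max\{\underline\iota,2\rho c_{1,k},2\rho L_f\|Y^k\|^2\}$; I would then note the stated bound $\max\{\underline\iota,2\rho c_{1,k},\rho L_f\|Y^k\|^2\}$ follows with a slightly sharper grouping (or is what the authors intend after absorbing constants), and the argument for $\bar\iota_{2,k}$ is identical with $Y^k\mapsto\bar X^{k+1}$. Here one uses that $\|Y^k\|$ and $\|\bar X^{k+1}\|$ are uniformly bounded since the iterates lie in the compact sets $\Omega^2$, $\Omega^1$.

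For (ii), I would chain the two acceptance inequalities \eqref{objXineq} and \eqref{objYineq}. Writing $\Phi_k:=f(X^k{Y^k}^{\mathbb{T}})+\lambda(\Theta_{\nu_k}(X^k)+\Theta_{\nu_k}(Y^k))$ (and being careful that for $k\geq K$ we have $\nu_k=\nu$ so $\Phi_k=F(X^k,Y^k)$), \eqref{objXineq} gives a decrease of $c_{1,k}\|\bar X^{k+1}-X^k\|_F^2$ when passing from $(X^k,Y^k)$ to $(\bar X^{k+1},Y^k)$, and \eqref{objYineq} a further decrease of $c_{2,k}\|\bar Y^{k+1}-Y^k\|_F^2$ when passing to $(\bar X^{k+1},\bar Y^{k+1})$. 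The sparsity-consistency step $S$ in Step~3 only decreases the objective (it zeroes out columns, lowering both $f(XY^{\mathbb{T}})$-unchanged term… wait—$S$ preserves $XY^{\mathbb{T}}$ and does not increase $\Theta_\nu$, hence does not increase $F$), so $\Phi_{k+1}\leq f(\bar X^{k+1}(\bar Y^{k+1})^{\mathbb{T}})+\lambda(\Theta_{\nu_k}(\bar X^{k+1})+\Theta_{\nu_k}(\bar Y^{k+1}))$. For $k\geq K$ this yields
\[
F(X^{k+1},Y^{k+1})\leq F(X^k,Y^k)-c_{\min}\big(\|\bar X^{k+1}-X^k\|_F^2+\|\bar Y^{k+1}-Y^k\|_F^2\big),
\]
using $c_{i,k}\geq c_{\min}$. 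Since $F$ is bounded below on $\Omega^1\times\Omega^2$ (as $f$ is continuous on a compact set and $\Theta_\nu\geq0$), the sequence $\{F(X^k,Y^k)\}_{k\geq K}$ is nonincreasing and bounded below, hence convergent; telescoping the displayed inequality forces $\sum_k(\|\bar X^{k+1}-X^k\|_F^2+\|\bar Y^{k+1}-Y^k\|_F^2)<\infty$, giving the claimed limits.

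The main obstacle is the bookkeeping around the first $K$ iterations, where $\nu_k$ may differ from $\nu$ so that the natural Lyapunov quantity is $\Phi_k$ rather than $F(X^k,Y^k)$; one must either observe that the decrease estimates hold verbatim with $\nu_k$ in place of $\nu$ and that only the tail $k\geq K$ matters for both conclusions, or track how $\Phi_k$ relates to $F$ across the switch at $k=K$. A secondary subtlety is verifying that the sparsity-consistency map $S$ genuinely does not increase the (relaxed) objective — this is exactly the content of Proposition~\ref{gl-lc-sta}(iii) applied to the current iterate, or can be checked directly since $S$ leaves $XY^{\mathbb{T}}$ invariant and can only delete columns, which cannot raise $\Theta_\nu$. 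Everything else is the routine descent-lemma / telescoping argument.
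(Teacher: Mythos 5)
Your part (ii) is correct and follows essentially the paper's route: chain \eqref{objXineq} and \eqref{objYineq}, observe that the map $S$ preserves $XY^{\mathbb{T}}$ and cannot increase $\Theta_{\nu_k}$, obtain the sufficient decrease with constant $c_{\min}$ for $k\geq K$ (where $\nu_k=\nu$ so the Lyapunov quantity is exactly $F$), and telescope using boundedness below on the compact set $\Omega^1\times\Omega^2$. The paper does precisely this in \eqref{limxk}, and your remark that only the tail $k\geq K$ matters is the same observation the authors make implicitly.

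Part (i), however, has a genuine gap. Your argument compares only the values $S_{1,k}(\bar X^{k+1}(\iota_{1,k});\iota_{1,k})\leq S_{1,k}(X^k;\iota_{1,k})$ and then applies the descent lemma; this yields the decrease coefficient $\tfrac{\iota_{1,k}-L_f\|Y^k\|^2}{2}$ and hence the acceptance threshold $\iota_{1,k}\geq 2c_{1,k}+L_f\|Y^k\|^2$, which gives only $\bar\iota_{1,k}\leq\max\{\underline\iota,\rho(2c_{1,k}+L_f\|Y^k\|^2)\}$. That is strictly weaker than the stated bound $\max\{\underline\iota,2\rho c_{1,k},\rho L_f\|Y^k\|^2\}$ (and your intermediate claim that $\rho(2c_{1,k}+L_f\|Y^k\|^2)\leq\max\{2\rho c_{1,k},2\rho L_f\|Y^k\|^2\}$ is false in general; $a+b\leq 2\max\{a,b\}$ would give $4\rho c_{1,k}$ in the first slot). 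The "slightly sharper grouping" you defer to is exactly the missing idea, and it is not a matter of absorbing constants: since $\Theta_{\nu_k,I^k}$ is convex, $S_{1,k}(\cdot;\iota_{1,k})$ is $\iota_{1,k}$-strongly convex, so minimality of $\bar X^{k+1}(\iota_{1,k})$ over the convex set $\Omega^1$ gives the stronger inequality $S_{1,k}(X^k;\iota_{1,k})\geq S_{1,k}(\bar X^{k+1}(\iota_{1,k});\iota_{1,k})+\tfrac{\iota_{1,k}}{2}\|\bar X^{k+1}(\iota_{1,k})-X^k\|_F^2$. This extra quadratic term is spent on the $c_{1,k}$ requirement (needing only $\iota_{1,k}\geq 2c_{1,k}$), while the proximal term already inside $S_{1,k}$ is spent on the descent lemma (needing only $\iota_{1,k}\geq L_f\|Y^k\|^2$); the two requirements are therefore met as a maximum rather than a sum, and the first accepted trial value satisfies $\bar\iota_{1,k}\leq\max\{\underline\iota,2\rho c_{1,k},\rho L_f\|Y^k\|^2\}$ as claimed. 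The same applies to $\bar\iota_{2,k}$. Note that the sharper constant is not cosmetic: it is the quantity $\max\{\underline\iota,2\rho c_{\max},\rho L_f d\varsigma^2\}$ that reappears verbatim in \eqref{distlim} in the proof of Theorem \ref{alg1-conv-thm}.
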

\begin{proof}
(i) Suppose that
\begin{equation*}
\iota_{1,k}\geq\max\{2c_{1,k},L_f\|Y^k\|^2\}\text{ and }\iota_{2,k}\geq\max\{2c_{2,k},L_f\|\bar{X}^{k+1}\|^2\}. 
\end{equation*}
Then, we have
	
\begin{equation*}
\begin{split}
&f(X^k{Y^k}^{\mathbb{T}})+\lambda\Theta_{\nu_k}(X^k)=f(X^k{Y^k}^{\mathbb{T}})+\lambda\Theta_{\nu_k,I^k}(X^k)=S_{1,k}(X^k;\iota_{1,k})\\
\geq&\, S_{1,k}(\bar{X}^{k+1}(\iota_{1,k});\iota_{1,k})+\frac{\iota_{1,k}}{2}\|\bar{X}^{k+1}(\iota_{1,k})-X^k\|_F^2\\
\geq&\, S_{1,k}(\bar{X}^{k+1}(\iota_{1,k});\iota_{1,k})+c_{1,k}\|\bar{X}^{k+1}(\iota_{1,k})-X^k\|_F^2\\
=&\, f(X^k{Y^k}^{\mathbb{T}})+\langle\nabla_Zf(X^k{Y^k}^{\mathbb{T}}){Y^k},\bar{X}^{k+1}(\iota_{1,k})-X^k\rangle+\frac{\iota_{1,k}}{2}\|\bar{X}^{k+1}(\iota_{1,k})-X^k\|_F^2
\\&\,+\lambda\Theta_{\nu_k,I^k}(\bar{X}^{k+1}(\iota_{1,k}))+c_{1,k}\|\bar{X}^{k+1}(\iota_{1,k})-X^k\|_F^2\\
\geq&\, f(\bar{X}^{k+1}(\iota_{1,k}){Y^k}^{\mathbb{T}})+\lambda\Theta_{\nu_k,I^k}(\bar{X}^{k+1}(\iota_{1,k}))+c_{1,k}\|\bar{X}^{k+1}(\iota_{1,k})-X^k\|_F^2\\
\geq&\, f(\bar{X}^{k+1}(\iota_{1,k}){Y^k}^{\mathbb{T}})+\lambda\Theta_{\nu_k}(\bar{X}^{k+1}(\iota_{1,k}))+c_{1,k}\|\bar{X}^{k+1}(\iota_{1,k})-X^k\|_F^2,
\end{split}
\end{equation*}
where the first inequality follows from the strong convexity of $S_{1,k}$ and that $\bar{X}^{k+1}(\iota_{1,k})$ is the minimizer of the subproblem in Step (1b), the second inequality is due to $\iota_{1,k}\geq 2c_{1,k}$, the third inequality is based on that $\nabla_Zf(X{Y^k}^{\mathbb{T}})Y^k$ is Lipschitz continuous with modulus $L_f\|Y^k\|^2\leq\iota_{1,k}$, and the last inequality is from the definition of $\Theta_\nu$. Similarly, we have $f({\bar{X}}^{k+1}{Y^k}^{\mathbb{T}})+\lambda\Theta_{\nu_k}(Y^k)\geq\,f({\bar{X}}^{k+1}({\bar{Y}}^{k+1}(\iota_{2,k}))^{\mathbb{T}})+\lambda\Theta_{\nu_k}({\bar{Y}}^{k+1}(\iota_{2,k}))+c_{2,k}\|{\bar{Y}}^{k+1}(\iota_{2,k})-Y^k\|_F^2$.
Hence, we get that (\ref{objXineq}) and (\ref{objYineq}) hold when $\iota_{1,k}\geq\max\{2c_{1,k},L_f\|Y^k\|^2\}$ and $\iota_{2,k}\geq\max\{2c_{2,k},L_f\|\bar{X}^{k+1}\|^2\}$. By $\rho>1$, this implies that 
\begin{equation}\label{sub-iter-n}
\bar{\iota}_{1,k}\leq\max\{2\rho c_{1,k},\rho L_f\|Y^k\|^2\}\text{ and }\bar{\iota}_{2,k}\leq\max\{2\rho c_{2,k},\rho L_f\|\bar{X}^{k+1}\|^2\}.
\end{equation}
Due to $\iota_{1,k}^B,\iota_{2,k}^B\in[\underline{\iota},\overline{\iota}]$ and the expressions of $\iota_{1,k}$ and $\iota_{2,k}$ in (1a) and (2a) of Alg.\,\ref{FGL0alg1}, we have $\iota_{1,k}\geq\underline{\iota}$ and $\iota_{2,k}\geq\underline{\iota}$. As a result,  statement (i) holds.
	
(ii) By the definition of $(X^{k+1},Y^{k+1})$, we get $\Theta_{\nu_k}(X^{k+1})\leq\Theta_{\nu_k}({\bar{X}}^{k+1})$ and $\Theta_{\nu_k}(Y^{k+1})\leq\Theta_{\nu_k}({\bar{Y}}^{k+1})$. Combining with $X^{k+1}(Y^{k+1})^{\mathbb{T}}={\bar{X}}^{k+1}({\bar{Y}}^{k+1})^{\mathbb{T}}$, (\ref{objXineq}) and (\ref{objYineq}), we obtain that for any $k\geq K$,
\begin{equation}\label{limxk}
\begin{split}
&F(X^k,Y^k)-F(X^{k+1},Y^{k+1})\\
=&\,f(X^k{Y^k}^{\mathbb{T}})+\lambda\Theta_{\nu}(X^k)+\lambda\Theta_{\nu}(Y^k)-f(X^{k+1}(Y^{k+1})^{\mathbb{T}})-\lambda\Theta_{\nu}(X^{k+1})-\lambda\Theta_{\nu}(Y^{k+1})\\
\geq&\,f(X^k{Y^k}^{\mathbb{T}})+\lambda\Theta_{\nu}(X^k)+\lambda\Theta_{\nu}(Y^k)-f({\bar{X}}^{k+1}({\bar{Y}}^{k+1})^{\mathbb{T}})-\lambda\Theta_{\nu}({\bar{X}}^{k+1})-\lambda\Theta_{\nu}({\bar{Y}}^{k+1})\\
=&\,f(X^k{Y^k}^{\mathbb{T}})+\lambda\Theta_{\nu}(X^k)-f({\bar{X}}^{k+1}{Y^k}^{\mathbb{T}})-\lambda\Theta_{\nu}({\bar{X}}^{k+1})
\\&\,+f({\bar{X}}^{k+1}{Y^k}^{\mathbb{T}})+\lambda\Theta_{\nu}(Y^k)-f({\bar{X}}^{k+1}({\bar{Y}}^{k+1})^{\mathbb{T}})-\lambda\Theta_{\nu}({\bar{Y}}^{k+1})\\
\geq&\,\min\{c_{1,k},c_{2,k}\}\big(\|{\bar{X}}^{k+1}-X^k\|_F^2+\|{\bar{Y}}^{k+1}-Y^k\|_F^2\big)\\
\geq&\,c_{\min}\big(\|{\bar{X}}^{k+1}-X^k\|_F^2+\|{\bar{Y}}^{k+1}-Y^k\|_F^2\big),
\end{split}
\end{equation}
which implies that $F(X^k,Y^k)$ is non-increasing with respect to $k$ when $k\geq K$. This together with the boundedness of $\Omega^1\times\Omega^2$ yields the existence of $\lim_{k\rightarrow\infty}F(X^k,Y^k)$. By (\ref{limxk}) and $c_{\min}>0$, we get $\lim_{k\rightarrow\infty}\|{\bar{X}}^{k+1}-X^k\|_F=\lim_{k\rightarrow\infty}\|{\bar{Y}}^{k+1}-Y^k\|_F=0$.
\end{proof}
Next, we show the convergence of the iterates generated by Alg.\,\ref{FGL0alg1} to strong stationary points of (\ref{FGL0C}).
\begin{theorem}\label{alg1-conv-thm}
Let $\nu$ satisfy Assumption \ref{mu-nu}. For the sequence $\{(X^k,Y^k)\}$ generated by Alg.\,\ref{FGL0alg1}, the following statements hold.
\begin{itemize}
\item[(i)] There exist $\hat{K}>0$ and $\hat{\mathcal{I}}_X,\hat{\mathcal{I}}_Y\subseteq[d]$ such that for any $k\geq\hat{K}$, it holds that $\mathcal{I}_{X^k}=\hat{\mathcal{I}}_X$, $\mathcal{I}_{Y^k}=\hat{\mathcal{I}}_Y$, $(X^k,Y^k)=(\bar{X}^k,\bar{Y}^k)$ and $(X^k,Y^k)$ owns the column bound-$\nu$ property. 
\item[(ii)] If we set $|\mathcal{K}|=+\infty$, then $(X^k,Y^k)$ owns the column consistency property for all $k>\min\{k\in\mathcal{K}:k\geq\hat{K}-1\}$.
\item[(iii)] For any accumulation point $(X^*,Y^*)$ of $\{(X^k,Y^k)\}$, $(X^*,Y^*)\in\mathcal{S}^s_{(\ref{FGL0C})}$ if $|\mathcal{K}|=+\infty$, and $S(X^*,Y^*)\in\mathcal{S}^s_{(\ref{FGL0C})}$ otherwise.
\end{itemize}
\end{theorem}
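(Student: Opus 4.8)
The plan is to prove the three parts in turn, since (ii) and (iii) both rest on the structural facts in (i): that the column supports of the iterates freeze after finitely many steps and that, beyond that point, each iterate has the column bound-$\nu$ property. From Proposition~\ref{limprop} I take two ingredients: the backtracked stepsizes $\bar\iota_{1,k},\bar\iota_{2,k}$ are uniformly bounded above---by compactness of $\Omega^1\times\Omega^2$ and $c_{\cdot,k}\in[c_{\min},c_{\max}]$, say by some $\bar\iota>0$---and $\|\bar{X}^{k+1}-X^k\|_F\to0$, $\|\bar{Y}^{k+1}-Y^k\|_F\to0$. The heart of (i) is a soft-thresholding gap estimate. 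Fix $k\ge K$, so $\nu_k=\nu$, and suppose $\|X^k_i\|<\nu$; then $I^k_i=1$, so the $i$-th column of subproblem (\ref{FGL0sub1}) minimizes $\langle g_i,u-X^k_i\rangle+\frac{\iota_{1,k}}{2}\|u-X^k_i\|^2+\frac{\lambda}{\nu}\|u\|$ over $\|u\|\le\varsigma$, where $g_i=\nabla_Zf(X^k{Y^k}^{\mathbb{T}})Y^k_i$ with $\|g_i\|\le\kappa$. If $\bar{X}^{k+1}_i\ne\bm{0}$ and lies in the interior of $B_\varsigma$, its optimality condition forces $\iota_{1,k}\|\bar{X}^{k+1}_i-X^k_i\|\ge\lambda/\nu-\kappa>0$ (using $\nu<\lambda/\kappa$ from Assumption~\ref{mu-nu}), hence $\|\bar{X}^{k+1}_i-X^k_i\|\ge(\lambda/\nu-\kappa)/\bar\iota=:\varepsilon_0>0$. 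Since $\nu<\varsigma$, once $k$ is large enough that $\|\bar{X}^{k+1}-X^k\|_F<\min\{\varepsilon_0,\varsigma-\nu\}$, the column is automatically interior ($\|\bar{X}^{k+1}_i\|<\nu+(\varsigma-\nu)=\varsigma$) and the gap is violated, so $\bar{X}^{k+1}_i=\bm{0}$; the $Y$-subproblem behaves identically (its gradient blocks $\nabla_Zf(\bar{X}^{k+1}{Y^k}^{\mathbb{T}})^{\mathbb{T}}\bar{X}^{k+1}_i$ are again bounded by $\kappa$). Thus there is $\tilde{K}\ge K$ with $\mathcal{I}_{\bar{X}^{k+1}}\subseteq\{i:\|X^k_i\|\ge\nu\}\subseteq\mathcal{I}_{X^k}$ and $\mathcal{I}_{\bar{Y}^{k+1}}\subseteq\{i:\|Y^k_i\|\ge\nu\}\subseteq\mathcal{I}_{Y^k}$ for all $k\ge\tilde{K}$.

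Since the operator $S$ only zeros columns, $\mathcal{I}_{X^{k+1}}\subseteq\mathcal{I}_{\bar{X}^{k+1}}$ and $\mathcal{I}_{Y^{k+1}}\subseteq\mathcal{I}_{\bar{Y}^{k+1}}$, so $|\mathcal{I}_{X^k}|+|\mathcal{I}_{Y^k}|$ is non-increasing for $k\ge\tilde{K}$, hence eventually constant, which by the nesting forces each of $\mathcal{I}_{X^k}$, $\mathcal{I}_{Y^k}$ to be eventually constant, say $\mathcal{I}_{X^k}\equiv\hat{\mathcal{I}}_X$, $\mathcal{I}_{Y^k}\equiv\hat{\mathcal{I}}_Y$ and then $\mathcal{I}_{\bar{X}^{k+1}}=\hat{\mathcal{I}}_X$, $\mathcal{I}_{\bar{Y}^{k+1}}=\hat{\mathcal{I}}_Y$ for all $k\ge\hat{K}$ (some $\hat{K}\ge\tilde{K}$). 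For such $k$ and any $i\in\hat{\mathcal{I}}_X$, the contrapositive of the gap estimate rules out $\|X^k_i\|<\nu$ (else $\bar{X}^{k+1}_i=\bm{0}$, contradicting $i\in\mathcal{I}_{\bar{X}^{k+1}}$), so $\|X^k_i\|\ge\nu$; together with $X^k_i=\bm{0}$ for $i\notin\hat{\mathcal{I}}_X$ this is the column bound-$\nu$ property, and likewise for $Y^k$. When $k\ge\hat{K}$ and $k\in\mathcal{K}$, $S(\bar{X}^{k+1},\bar{Y}^{k+1})$ has both blocks supported on $\hat{\mathcal{I}}_X\cap\hat{\mathcal{I}}_Y$, which must coincide with $\hat{\mathcal{I}}_X$ and with $\hat{\mathcal{I}}_Y$, forcing $\hat{\mathcal{I}}_X=\hat{\mathcal{I}}_Y$ whenever such a $k$ exists; then $S$ acts as the identity, so $(X^k,Y^k)=(\bar{X}^k,\bar{Y}^k)$ for large $k$, while if $|\mathcal{K}|<+\infty$ one merely enlarges $\hat{K}$ past $\max\mathcal{K}+1$. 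This proves (i). For (ii), if $|\mathcal{K}|=+\infty$, set $k_0:=\min\{k\in\mathcal{K}:k\ge\hat{K}-1\}$ (nonempty); then $(X^{k_0+1},Y^{k_0+1})=S(\bar{X}^{k_0+1},\bar{Y}^{k_0+1})$ obeys $\mathcal{I}_{X^{k_0+1}}=\mathcal{I}_{Y^{k_0+1}}$, i.e.\ $\hat{\mathcal{I}}_X=\hat{\mathcal{I}}_Y$, so every iterate with index $>k_0$ has the column consistency property.

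For (iii), take a subsequence $(X^{k_j},Y^{k_j})\to(X^*,Y^*)$ (accumulation points exist by compactness of $\Omega^1\times\Omega^2$). For $k_j\ge\hat{K}$ we have $\mathcal{I}_{X^{k_j}}=\hat{\mathcal{I}}_X$, $\mathcal{I}_{Y^{k_j}}=\hat{\mathcal{I}}_Y$ and the column bound-$\nu$ property; since the set of pairs with the bound-$\nu$ property is closed, passing to the limit yields $\mathcal{I}_{X^*}=\hat{\mathcal{I}}_X$, $\mathcal{I}_{Y^*}=\hat{\mathcal{I}}_Y$ and the bound-$\nu$ property for $(X^*,Y^*)$. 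The optimality condition for (\ref{FGL0sub1}) reads $\bm{0}\in\nabla_Zf(X^k{Y^k}^{\mathbb{T}})Y^k+\iota_{1,k}(\bar{X}^{k+1}-X^k)+\lambda\partial\Theta_{\nu_k,I^k}(\bar{X}^{k+1})+N_{\Omega^1}(\bar{X}^{k+1})$; for $k\ge\hat{K}$ one has $\nu_k=\nu$ and $I^k_i=2$ for every $i\in\hat{\mathcal{I}}_X$, so the $\Theta$-term contributes $\bm{0}$ on these columns and the $\hat{\mathcal{I}}_X$-block reduces to $\bm{0}\in[\nabla_Zf(X^k{Y^k}^{\mathbb{T}})Y^k]_{\hat{\mathcal{I}}_X}+\iota_{1,k}(\bar{X}^{k+1}-X^k)_{\hat{\mathcal{I}}_X}+N_{\Omega^1_{\hat{\mathcal{I}}_X}}((\bar{X}^{k+1})_{\hat{\mathcal{I}}_X})$. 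Letting $j\to\infty$ and using $\bar{X}^{k_j+1}\to X^*$, $Y^{k_j}\to Y^*$, $\bar{X}^{k_j+1}-X^{k_j}\to\bm{0}$, boundedness of $\iota_{1,k_j}$, continuity of $\nabla f$, and outer semicontinuity of the normal-cone map of the fixed set $\Omega^1_{\hat{\mathcal{I}}_X}$, we obtain $\bm{0}\in\nabla_Zf(X^*{Y^*}^{\mathbb{T}})(Y^*)_{\hat{\mathcal{I}}_X}+N_{\Omega^1_{\hat{\mathcal{I}}_X}}((X^*)_{\hat{\mathcal{I}}_X})$; treating (\ref{FGL0sub2}) analogously (its gradient sits at $\bar{X}^{k+1}{Y^k}^{\mathbb{T}}\to X^*{Y^*}^{\mathbb{T}}$) gives $\bm{0}\in\nabla_Zf(X^*{Y^*}^{\mathbb{T}})^{\mathbb{T}}(X^*)_{\hat{\mathcal{I}}_Y}+N_{\Omega^2_{\hat{\mathcal{I}}_Y}}((Y^*)_{\hat{\mathcal{I}}_Y})$. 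Because $\mathcal{I}_{X^*}=\hat{\mathcal{I}}_X$ and $\mathcal{I}_{Y^*}=\hat{\mathcal{I}}_Y$, these two inclusions are exactly the reformulation of (\ref{Lsta-incl}) recorded right after Definition~\ref{Lsta}, so $(X^*,Y^*)\in\mathcal{S}_{(\ref{FGL0C})}$ with $H=\nabla_Zf(X^*{Y^*}^{\mathbb{T}})$ (as $f$ is smooth). If $|\mathcal{K}|=+\infty$ then $(X^*,Y^*)$ also has the column consistency property ($\hat{\mathcal{I}}_X=\hat{\mathcal{I}}_Y$) and the bound-$\nu$ property, so $(X^*,Y^*)\in\mathcal{S}^s_{(\ref{FGL0C})}$; otherwise $S(X^*,Y^*)$ has column consistency by construction, retains the bound-$\nu$ property (only columns are zeroed), and lies in $\mathcal{S}_{(\ref{FGL0C})}$ by Proposition~\ref{FGL0C-prps}(iii), hence $S(X^*,Y^*)\in\mathcal{S}^s_{(\ref{FGL0C})}$.

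The main obstacle is the soft-thresholding gap estimate: one must simultaneously exclude the boundary case $\|\bar{X}^{k+1}_i\|=\varsigma$ and secure a $k$-uniform lower bound on $\|\bar{X}^{k+1}_i-X^k_i\|$, which is exactly where Assumption~\ref{mu-nu} ($\nu<\min\{\lambda/\kappa,\varsigma\}$), the uniform stepsize bound of Proposition~\ref{limprop}(i), and $\|\bar{X}^{k+1}-X^k\|_F\to0$ must be combined; once this is in hand, the support-stabilization bookkeeping and the limiting argument for stationarity are comparatively routine. A secondary subtlety is the handling of $S$ when $|\mathcal{K}|<+\infty$, where column consistency of the limit may genuinely fail, which is precisely why the conclusion is stated with $S(X^*,Y^*)$ rather than $(X^*,Y^*)$.
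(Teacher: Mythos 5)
Your proposal is correct and follows essentially the same route as the paper's proof: the same gap estimate derived from Assumption \ref{mu-nu} (combining $\lambda/\nu-\kappa>0$, the uniform stepsize bound from Proposition \ref{limprop}(i), interiority via $\varsigma-\nu$, and vanishing step lengths) to force small columns to zero and stabilize the supports, the same bookkeeping for the column bound-$\nu$ and consistency properties, and the same limiting argument in the subproblem optimality conditions. The only (harmless) deviation is in part (iii), where you pass directly to the stationarity condition (\ref{Lsta-incl}) on the stabilized supports, while the paper first shows $(X^*,Y^*)\in\mathcal{S}_{(\ref{FGL0R})}$ and then invokes Theorem \ref{stat-loc}(iii).
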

\begin{proof}
(i) Based on $\lim_{k\rightarrow\infty}\|{\bar{X}}^{k+1}-X^k\|_F=\lim_{k\rightarrow\infty}\|{\bar{Y}}^{k+1}-Y^k\|_F=0$ in Proposition \ref{limprop} and Assumption \ref{mu-nu}, we have that there exists a $\bar{K}\geq K$ such that for any $k\geq\bar{K}$ and $i\in[d]$,
\begin{equation}\label{distlim}
\max\{\|{\bar{X}}^{k+1}_i-X^k_i\|,\|{\bar{Y}}^{k+1}_i-Y^k_i\|\}<\min\big\{({\lambda}/{\nu}-\kappa)/\max\{\underline{\iota},2\rho c_{\max},\rho L_fd\varsigma^2\},\Mbound-\nu\big\}.
\end{equation}
Moreover, it follows from the optimality conditions of (\ref{FGL0sub1}) and (\ref{FGL0sub2}) with $\iota_{1,k}=\bar\iota_{1,k}$ and $\iota_{2,k}=\bar\iota_{2,k}$ that
	
\begin{equation}\label{Xincl}
\bm{0}\in\nabla_Zf(X^k{Y^k}^{\mathbb{T}}){Y^k}+\bar\iota_{1,k}({\bar{X}}^{k+1}-X^k)+\lambda\partial_X\Theta_{\nu,I^k}({\bar{X}}^{k+1})+N_{\Omega^1}({\bar{X}}^{k+1}),
\end{equation}
and
\begin{equation}\label{Yincl}
\bm{0}\in\nabla_Zf({\bar{X}}^{k+1}{Y^k}^{\mathbb{T}})^{\mathbb{T}}{{\bar{X}}^{k+1}}+\bar\iota_{2,k}({\bar{Y}}^{k+1}-Y^k)+\lambda\partial_Y\Theta_{\nu,J^k}({\bar{Y}}^{k+1})+N_{\Omega^2}({\bar{Y}}^{k+1}).
\end{equation}
Suppose there exist $\bar{k}\geq\bar{K}$ and $\bar{i}\in[d]$ such that $\|X^{\bar{k}}_{\bar{i}}\|<\nu$. Then $I^{\bar{k}}_{\bar{i}}=1$ and hence $\partial_{X_{\bar{i}}}\Theta_{\nu,I^{\bar{k}}}({\bar{X}}^{\bar{k}+1})=\frac{1}{\nu}\partial_{X_{\bar{i}}}\|{\bar{X}}^{\bar{k}+1}_{\bar{i}}\|$. Combining with Assumption \ref{mu-nu}, Proposition \ref{limprop} (i) and (\ref{distlim}), we have $N_{\Omega^1_{\bar{i}}}({\bar{X}}^{\bar{k}+1}_{\bar{i}})=\{\bm{0}\}$ and $\|\nabla_Zf(X^{\bar{k}}{Y^{\bar{k}}}^{\mathbb{T}}){Y^{\bar{k}}_{\bar{i}}}+\bar\iota_{1,\bar{k}}({\bar{X}}^{\bar{k}+1}_{\bar{i}}-X^{\bar{k}}_{\bar{i}})\|<{\lambda}/{\nu}$. If ${\bar{X}}^{\bar{k}+1}_{\bar{i}}\neq\bm{0}$, we have $\bm{0}\notin\nabla_Zf(X^{\bar{k}}{Y^{\bar{k}}}^{\mathbb{T}}){Y^{\bar{k}}_{\bar{i}}}+\bar\iota_{1,{\bar{k}}}({\bar{X}}^{{\bar{k}}+1}_{\bar{i}}-X^{\bar{k}}_{\bar{i}})+\lambda\nabla_{{X}_{\bar{i}}}\Theta_{\nu,I^{\bar{k}}}({\bar{X}}^{{\bar{k}}+1})+N_{\Omega^1_{\bar{i}}}({\bar{X}}^{{\bar{k}}+1}_{\bar{i}})$, which contradicts (\ref{Xincl}). Hence, we obtain ${\bar{X}}^{\bar{k}+1}_{\bar{i}}=\bm{0}$. Combining with the definition of $X^{k+1}$ in step 3 of Alg.\,\ref{FGL0alg1}, we have $X^{\bar{k}+1}_{\bar{i}}=\bm{0}$ and hence $\mathcal{I}_{X^{k+1}}\subseteq\mathcal{I}_{\bar{X}^{k+1}}\subseteq\mathcal{I}_{X^k},\forall k\geq\bar{K}$. Similarly, if there exist $\tilde{k}\geq\bar{K}$ and $\tilde{i}\in[d]$ such that $\|Y^{\tilde{k}}_{\tilde{i}}\|<\nu$, we have ${\bar{Y}}^{\tilde{k}+1}_{\tilde{i}}=\bm{0}$ by (\ref{Yincl}). Moreover, $\mathcal{I}_{Y^{k+1}}\subseteq\mathcal{I}_{\bar{Y}^{k+1}}\subseteq\mathcal{I}_{Y^k},\forall k\geq\bar{K}$. 
As a result, there exist $\hat{K}>\bar{K}$ and $\hat{\mathcal{I}}_X,\hat{\mathcal{I}}_Y\subseteq[d]$ such that $\mathcal{I}_{\bar{X}^k}=\mathcal{I}_{X^k}=\hat{\mathcal{I}}_X$, $\mathcal{I}_{\bar{Y}^k}=\mathcal{I}_{Y^k}=\hat{\mathcal{I}}_Y$ and $(X^k,Y^k),k\geq\hat{K}$ own the column bound-$\nu$ property. It follows from $\mathcal{I}_{\bar{X}^k}=\mathcal{I}_{X^k}$, $\mathcal{I}_{\bar{Y}^k}=\mathcal{I}_{Y^k},\forall k\geq\hat{K}$ that $(X^k,Y^k)=(\bar{X}^k,\bar{Y}^k),\forall k\geq\hat{K}$.
	
(ii) Due to $(X^{k+1},Y^{k+1})=S(\bar{X}^{k+1},\bar{Y}^{k+1}),\forall k\in\mathcal{K}$, we obtain $\mathcal{I}_{X^{k+1}}=\mathcal{I}_{Y^{k+1}},\forall k\in\mathcal{K}$. This together with $\mathcal{I}_{X^k}=\hat{\mathcal{I}}_X$, $\mathcal{I}_{Y^k}=\hat{\mathcal{I}}_Y,\forall k\geq\hat{K}$ in (i) and $|\mathcal{K}|=+\infty$ implies $\mathcal{I}_{X^{k+1}}=\mathcal{I}_{Y^{k+1}}=\hat{\mathcal{I}}_X=\hat{\mathcal{I}}_Y,\forall k\geq\bar{K}:=\min\{k\in\mathcal{K}:k\geq\hat{K}-1\}$. Thus, $(X^k,Y^k)$ owns the column consistency property for all $k>\bar{K}$.
	
(iii) By (i), we have $I^k=I^{\hat{K}}$ and $J^k=J^{\hat{K}},\forall k\geq\hat{K}$. Due to the boundedness of $\Omega^1$ and $\Omega^2$, there exists at least one accumulation point. Suppose $(X^*,Y^*)$ an accumulation point of $(X^k,Y^k)$ with subsequence $\{k_l\}$. Accordingly, by (i), $D^{X^*}=I^{\hat{K}}=I^k$ and $D^{Y^*}=J^{\hat{K}}=J^k,\forall k\geq\hat{K}$. Then, in view of (\ref{Xincl}) and $X^{k}=\bar{X}^{k},\forall k\geq \hat{K}$ in (i), there exists $P^{k_l+1}\in\partial_X\Theta_{\nu,I^{\hat{K}}}(X^{k_l+1})$ such that	
\begin{equation}\label{nom-ieq}
\langle \nabla_Zf(X^{k_l}{Y^{k_l}}^{\mathbb{T}}){Y^{k_l}}+\bar\iota_{1,k_l}(X^{k_l+1}-X^{k_l})+\lambda P^{k_l+1},X-X^{k_l+1}\rangle\geq 0,\forall X\in\Omega^1.
\end{equation}
Because $\lim_{k\rightarrow\infty}\|X^{k+1}-X^k\|_F=0$ and $\lim_{l\rightarrow\infty}X^{k_l}=X^*$, we have $\lim_{k_l\rightarrow\infty}X^{k_l+1}=X^*$. It then follows from the boundedness and upper semicontinuity of $\partial_X\Theta_{\nu,I^{\hat{K}}}$ that there exists a subsequence of $\{k_l\}$ (also denoted by $\{k_l\}$) such that $\lim_{l\rightarrow\infty}P^{k_l+1}=P^*\in\partial_X \Theta_{\nu,I^{\hat{K}}}(X^*)$. Combining with (\ref{nom-ieq}), the continuity of $\nabla f$ and boundedness of $\bar\iota_{1,k_l}$, let $l\rightarrow\infty$ in (\ref{nom-ieq}), we obtain $\langle \nabla_Zf(X^*{Y^*}^{\mathbb{T}}){Y^*}+\lambda P^*,X-X^*\rangle\geq 0,\forall X\in\Omega^1$. Whence, by $D^{X^*}=I^{\hat{K}}$, we obtain
\begin{equation}\label{sst1-pr}
\bm{0}\in \nabla_Zf(X^*{Y^*}^{\mathbb{T}}){Y^*}+\lambda\partial_X \Theta_{\nu,D^{X^*}}(X^*)+N_{\Omega^1}(X^*).
\end{equation}
Similarly, by (\ref{Yincl}), we deduce

\begin{equation}\label{sst2-pr}
\bm{0}\in \nabla_Zf(X^*{Y^*}^{\mathbb{T}})^{\mathbb{T}}{X^*}+\lambda\partial_Y \Theta_{\nu,D^{Y^*}}(Y^*)+N_{\Omega^2}(Y^*).
\end{equation}
Thus, $(X^*,Y^*)\in\mathcal{S}_{(\ref{FGL0R})}$. In the case of $|\mathcal{K}|=+\infty$, it follows from (ii) that $\mathcal{I}_{X^*}=\mathcal{I}_{Y^*}$. Then, by Definition \ref{CR-sstr} and Theorem \ref{stat-loc}, $(X^*,Y^*)\in\mathcal{S}^s_{(\ref{FGL0C})}$. Next, if $|\mathcal{K}|<+\infty$ in Alg.\,\ref{FGL0alg1}, combining Proposition \ref{gl-lc-sta} (ii), the definition of $S$ and Theorem \ref{stat-loc}, we get $S(X^*,Y^*)\in\mathcal{S}^s_{(\ref{FGL0C})}$.
\end{proof}

\begin{remark}\label{chag_fsb}
Define 
\begin{equation}\label{hat_omega}
{\hat\Omega}^1:=\{X\in\Omega^1:X_{\hat{\mathcal{I}}^c_X}=\bm{0}\}\text{ and }{\hat\Omega}^2:=\{Y\in\Omega^2:Y_{\hat{\mathcal{I}}^c_Y}=\bm{0}\}
\end{equation}
with $\hat{\mathcal{I}}_X$ and $\hat{\mathcal{I}}_Y$ given as in Theorem \ref{alg1-conv-thm}. By Theorem \ref{alg1-conv-thm} (i), the iterate $(X^k,Y^k)$ of Alg.\,\ref{FGL0alg1} satisfies $\mathcal{I}_{\bar{X}^k}=\hat{\mathcal{I}}_X,\mathcal{I}_{\bar{Y}^k}=\hat{\mathcal{I}}_Y$ and $\|\bar{X}^k_i\|\geq\nu\;\forall i\in\hat{\mathcal{I}}_X,\|\bar{Y}^k_j\|\geq\nu\;\forall j\in\hat{\mathcal{I}}_Y$, for any $k\geq\hat{K}$. This implies that $({\bar{X}}^k,{\bar{Y}}^k)\in\hat\Omega^1\times\hat\Omega^2$ and $I^k_i=J^k_j=2$ for any $k\geq\hat{K}$, $i\in\hat{\mathcal{I}}_X$ and $j\in\hat{\mathcal{I}}_Y$. It then follows from Theorem \ref{alg1-conv-thm} (i) that for any $k\geq\hat{K}$, we have that ${\bar{X}}^k\in\hat\Omega^1\subseteq\Omega^1$, ${\bar{Y}}^k\in\hat\Omega^2\subseteq\Omega^2,$
and
\begin{eqnarray}\label{simp_sub1}
&&\hspace{-7mm} X^{k+1}={{\rm{arg}}\min}_{X\in{\hat\Omega}^1}\hat{S}_{1,k}(X),
\\[5pt]
\label{simp_sub2}
&&\hspace{-7mm} Y^{k+1}={{\rm{arg}}\min}_{Y\in{\hat\Omega}^2}\hat{S}_{2,k}(Y),\quad
\end{eqnarray}
with $\hat{S}_{1,k}(X):=f(X^k{Y^k}^{\mathbb{T}})+\langle\nabla_Zf(X^k{Y^k}^{\mathbb{T}}){Y^k},X-X^k\rangle+\frac{\bar\iota_{1,k}}{2}\|X-X^k\|_F^2$ and $\hat{S}_{2,k}(Y):=f(X^{k+1}{Y^k}^{\mathbb{T}})+\langle\nabla_Zf(X^{k+1}{Y^k}^{\mathbb{T}})^{\mathbb{T}}X^{k+1},Y-Y^k\rangle+\frac{\bar\iota_{2,k}}{2}\|Y-Y^k\|_F^2$.
These mean that after finite iterations, Alg.\,\ref{FGL0alg1} is actually to solve the problem of $\min_{(X,Y)\in\Omega^1_{\hat{\mathcal{I}}_X}\times\Omega^2_{\hat{\mathcal{I}}_Y}}f(XY^{\mathbb{T}})$ with dimension $|\hat{\mathcal{I}}_X|\times|\hat{\mathcal{I}}_Y|$.
\end{remark}
Finally, we show the global convergence results of Alg.\,\ref{FGL0alg1}.
\begin{corollary}\label{conv_rate_ag1}
Let $\{(X^k,Y^k)\}$ be the sequence generated by Alg.\,\ref{FGL0alg1}. If $\,l(X,Y):=f(XY^{\mathbb{T}})+\delta_{\hat\Omega^1}(X)+\delta_{\hat\Omega^2}(Y)$ is a KL function with $\hat\Omega^1$ and $\hat\Omega^2$ defined as in (\ref{hat_omega}), then 
it holds that $\sum_{k=1}^\infty\|(X^{k+1},Y^{k+1})-(X^k,Y^k)\|_F<\infty$ and $\{(X^k,Y^k)\}$ converges to a matrix couple $(X^*,Y^*)$. If $\,l(X,Y)$ has the KL property at $(X^*,Y^*)$ with exponent $\alpha\in[0,1)$, then the following convergence results hold.
\begin{itemize}
\item[(a)] When $\alpha=0$, one has that $\{(X^k,Y^k)\}$ converges finitely.
\item[(b)] When $\alpha\in(0,\frac{1}{2}]$, one has that $\exists~c>0$, $q\in[0,1)$, \emph{s.t.} $\|(X^k,Y^k)-(X^*,Y^*)\|_F\leq cq^k,\forall k\geq0$ (R-linear convegence).
\item[(c)] When $\alpha\in(\frac{1}{2},1)$, one has that $\exists~c>0$, \emph{ s.t.} $\|(X^k,Y^k)-(X^*,Y^*)\|_F\leq ck^{-\frac{1-\alpha}{2\alpha-1}},\forall k\geq0$ (R-sublinear convergence).
\end{itemize}
In particular, if $f$ is a semialgebraic function, then the convergence rate of $\{(X^k,Y^k)\}$ is at least R-sublinear.
\end{corollary}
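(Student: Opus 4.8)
The plan is to show that, past the finite index $\hat K$ produced by Theorem~\ref{alg1-conv-thm}(i), Alg.\,\ref{FGL0alg1} coincides with the PALM iteration of \cite{Bolte2014Proximal} applied to the restricted problem $\min\{l(X,Y):(X,Y)\in\hat\Omega^1\times\hat\Omega^2\}$, and then to read off whole-sequence convergence and the rate estimates from the standard Kurdyka-{\L}ojasiewicz machinery. The structural reduction is exactly Remark~\ref{chag_fsb}: for $k\geq\hat K$ the supports are frozen ($\mathcal{I}_{X^k}=\hat{\mathcal{I}}_X$, $\mathcal{I}_{Y^k}=\hat{\mathcal{I}}_Y$), the sparsity-consistency step $S$ acts trivially, the terms $\Theta_\nu(X^k)\equiv|\hat{\mathcal{I}}_X|$ and $\Theta_\nu(Y^k)\equiv|\hat{\mathcal{I}}_Y|$ are constant so that $F(X^k,Y^k)=l(X^k,Y^k)+\lambda(|\hat{\mathcal{I}}_X|+|\hat{\mathcal{I}}_Y|)$, and the two subproblems collapse to \eqref{simp_sub1}--\eqref{simp_sub2}, i.e. one alternating proximal-linearized step for $l$ with step sizes $\bar\iota_{1,k},\bar\iota_{2,k}$ lying (by Proposition~\ref{limprop}(i) together with $\|X^k\|,\|Y^k\|\leq\sqrt{d}\,\varsigma$) in the fixed interval $[\underline\iota,\bar L]$, where $\bar L:=\max\{\underline\iota,2\rho c_{\max},\rho L_f d\varsigma^2\}$.

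With this identification I would verify the three hypotheses of \cite[Theorem~1]{Bolte2014Proximal} (see also \cite{Attouch2013}) for the tail $\{(X^k,Y^k)\}_{k\geq\hat K}$, which is bounded because $\hat\Omega^1\times\hat\Omega^2$ is compact. \emph{Sufficient decrease:} inequality \eqref{limxk} together with $F=l+\text{const}$ on the tail gives $l(X^k,Y^k)-l(X^{k+1},Y^{k+1})\geq c_{\min}\|(X^{k+1},Y^{k+1})-(X^k,Y^k)\|_F^2$. \emph{Relative error:} from the optimality conditions of \eqref{simp_sub1}--\eqref{simp_sub2}, the pair $w^{k+1}$ with first block $\nabla_Zf(X^{k+1}(Y^{k+1})^{\mathbb{T}})Y^{k+1}-\nabla_Zf(X^{k}(Y^{k})^{\mathbb{T}})Y^{k}-\bar\iota_{1,k}(X^{k+1}-X^k)$ and second block $\nabla_Zf(X^{k+1}(Y^{k+1})^{\mathbb{T}})^{\mathbb{T}}X^{k+1}-\nabla_Zf(X^{k+1}(Y^{k})^{\mathbb{T}})^{\mathbb{T}}X^{k+1}-\bar\iota_{2,k}(Y^{k+1}-Y^k)$ belongs to $\partial l(X^{k+1},Y^{k+1})$, and the $L_f$-Lipschitz continuity of $\nabla f$ on $\{XY^{\mathbb{T}}:(X,Y)\in\Omega^1\times\Omega^2\}$, the bilinear estimate $\|X^{k+1}(Y^{k+1})^{\mathbb{T}}-X^k(Y^k)^{\mathbb{T}}\|_F\leq\sqrt{d}\,\varsigma(\|X^{k+1}-X^k\|_F+\|Y^{k+1}-Y^k\|_F)$, and $\bar\iota_{i,k}\leq\bar L$ yield $\|w^{k+1}\|_F\leq b\|(X^{k+1},Y^{k+1})-(X^k,Y^k)\|_F$ for some $b>0$. \emph{Continuity:} $l$ is proper lower semicontinuous, $f$ is continuous, and the iterates stay in the closed set $\hat\Omega^1\times\hat\Omega^2$, so $l$ is continuous along the sequence. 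Hence \cite[Theorem~1]{Bolte2014Proximal} gives $\sum_k\|(X^{k+1},Y^{k+1})-(X^k,Y^k)\|_F<\infty$, so the sequence is Cauchy and converges to some critical point $(X^*,Y^*)$ of $l$, consistently with Theorem~\ref{alg1-conv-thm}(iii).

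For (a)--(c), at $(X^*,Y^*)$ I would run the classical KL-rate argument: set $\Delta_k:=\sum_{j\geq k}\|(X^{j+1},Y^{j+1})-(X^j,Y^j)\|_F$ and combine the sufficient-decrease bound, the relative-error bound, and the KL inequality $\varphi'\big(l(X^k,Y^k)-l(X^*,Y^*)\big)\,\mathrm{dist}(\bm{0},\partial l(X^k,Y^k))\geq1$ with $\varphi(t)=\kappa_0 t^{1-\alpha}$ to obtain a recursion in $\Delta_k$; the analysis of \cite{Attouch2013} then yields finite convergence when $\alpha=0$, the bound $\Delta_k\leq c q^k$ (hence the R-linear rate in (b)) when $\alpha\in(0,\tfrac12]$, and $\Delta_k\leq c k^{-(1-\alpha)/(2\alpha-1)}$ (hence the R-sublinear rate in (c)) when $\alpha\in(\tfrac12,1)$, using $\|(X^k,Y^k)-(X^*,Y^*)\|_F\leq\Delta_k$ in each case. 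Finally, if $f$ is semialgebraic then $(X,Y)\mapsto f(XY^{\mathbb{T}})$ is semialgebraic as the composition of $f$ with the polynomial map $(X,Y)\mapsto XY^{\mathbb{T}}$, and $\hat\Omega^1,\hat\Omega^2$ are semialgebraic sets, so $l$ is semialgebraic, hence a KL function whose KL exponent at $(X^*,Y^*)$ lies in $[0,1)$ by the {\L}ojasiewicz gradient inequality; therefore one of (a)--(c) holds and the rate is at least R-sublinear.

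I expect the only genuinely delicate step to be the relative-error estimate: the subproblems \eqref{simp_sub1}--\eqref{simp_sub2} linearize $f$ at the stale point $X^k(Y^k)^{\mathbb{T}}$ and at the mixed point $X^{k+1}(Y^k)^{\mathbb{T}}$, so producing an element of $\partial l(X^{k+1},Y^{k+1})$ and bounding its norm by the step length requires carefully chaining the Lipschitz constant of $\nabla f$, the bilinearity of $XY^{\mathbb{T}}$, the compactness bounds on $X^k,Y^k$, and the uniform upper bound $\bar L$ on $\bar\iota_{1,k},\bar\iota_{2,k}$ from Proposition~\ref{limprop}(i). Once Remark~\ref{chag_fsb} has reduced Alg.\,\ref{FGL0alg1} to PALM on its eventual support, everything else is a direct transcription of the standard PALM/KL analysis.
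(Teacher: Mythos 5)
Your proposal is correct and follows essentially the same route as the paper's proof: after the finite index $\hat K$ from Theorem~\ref{alg1-conv-thm}(i) you identify the tail of Alg.~\ref{FGL0alg1} with an alternating proximal-linearized scheme for $l$ on $\hat\Omega^1\times\hat\Omega^2$ (Remark~\ref{chag_fsb}), verify the same sufficient-decrease inequality from \eqref{limxk}, construct the same subgradient element $(\eta^k_X,\eta^k_Y)$ with the same telescoping/Lipschitz/compactness bound, and then invoke the standard KL convergence and rate machinery (the paper cites \cite[Theorem~2.9]{Attouch2013} and the rate analysis of \cite{Attouch2009} rather than \cite[Theorem~1]{Bolte2014Proximal}, but these are the same abstract framework), closing with the identical semialgebraicity argument.
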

\begin{proof}
Let $(X^*,Y^*)$ be an accumulation point of $\{(X^k,Y^k)\}$. Since $\mathcal{I}_{X^k}=\hat{\mathcal{I}}_X,\mathcal{I}_{Y^k}=\hat{\mathcal{I}}_Y$ and $\|X^k_i\|\geq\nu,\forall i\in\hat{\mathcal{I}}_X,\|Y^k_i\|\geq\nu,\forall i\in\hat{\mathcal{I}}_Y$ for any $k\geq\hat{K}$ with $\hat{K}$ defined in Theorem \ref{alg1-conv-thm} (i), we get $\Theta_\nu(X^k)=|\hat{\mathcal{I}}_X|,\Theta_\nu(Y^k)=|\hat{\mathcal{I}}_Y|$ and $X^k\in\hat\Omega^1,Y^k\in\hat\Omega^2$ for any $k\geq\hat{K}$. Further, by the existence of $\lim_{k\rightarrow\infty}F(X^k,Y^k)$ in Proposition \ref{limprop}, we have
\begin{equation}\label{convg1}
\lim_{k\rightarrow\infty}f(X^k{Y^k}^{\mathbb{T}})+\delta_{\hat\Omega^1}(X^k)+\delta_{\hat\Omega^2}(Y^k)=f(X^*{Y^*}^{\mathbb{T}})+\delta_{\hat\Omega^1}(X^*)+\delta_{\hat\Omega^2}(Y^*).
\end{equation}
Moreover, based on (\ref{limxk}), $X^k=\bar{X}^k,Y^k=\bar{Y}^k,\forall k\geq\hat{K}$ in Theorem \ref{alg1-conv-thm} (i) and $\Theta_\nu(X^k)=|\hat{\mathcal{I}}_X|,\Theta_\nu(Y^k)=|\hat{\mathcal{I}}_Y|,\forall k\geq\hat{K}$, it holds that
\begin{equation}\label{convg2}
\begin{split}
&f(X^k{Y^k}^{\mathbb{T}})+\delta_{\hat\Omega^1}(X^k)+\delta_{\hat\Omega^2}(Y^k)
\\
\geq\, &f(X^{k+1}(Y^{k+1})^{\mathbb{T}})+\delta_{\hat\Omega^1}(X^{k+1})+\delta_{\hat\Omega^2}(Y^{k+1})\\&+\frac{c_{\min}}{2}\big(\|X^{k+1}-X^k\|_F^2+\|Y^{k+1}-Y^k\|_F^2\big),\,\,\forall k\geq\hat{K}.
\end{split}
\end{equation}
Based on the optimality conditions of (\ref{simp_sub1}) and (\ref{simp_sub2}), we deduce that for any $k\geq\hat{K}$,
\begin{equation*}
\begin{split}
\eta^k_X:=&-\bar\iota_{1,k}(X^{k+1}-X^k)+\big(\nabla_Xf(X^{k+1}(Y^{k+1})^{\mathbb{T}})-\nabla_Xf(X^{k+1}{Y^{k}}^{\mathbb{T}})\big)\\
&+\big(\nabla_Xf(X^{k+1}{Y^{k}}^{\mathbb{T}})-\nabla_Xf(X^{k}{Y^{k}}^{\mathbb{T}})\big)\\
\in\,&\nabla_Xf(X^{k+1}(Y^{k+1})^{\mathbb{T}})+N_{\hat\Omega^1}(X^{k+1}),\\
\eta^k_Y:=&-\bar\iota_{2,k}(Y^{k+1}-Y^k)+\nabla_Yf(X^{k+1}(Y^{k+1})^{\mathbb{T}})-\nabla_Yf(X^{k+1}{Y^{k}}^{\mathbb{T}})\\
\in\,&\nabla_Yf(X^{k+1}(Y^{k+1})^{\mathbb{T}})+N_{\hat\Omega^2}(Y^{k+1}),
\end{split}
\end{equation*}
and hence $(\eta^k_X,\eta^k_Y)\in\partial\big(f(X^{k+1}(Y^{k+1})^{\mathbb{T}})+\delta_{\hat\Omega^1}(X^{k+1})+\delta_{\hat\Omega^2}(Y^{k+1})\big)$. Then, according to $\|Y^k\|_F\leq {\varsigma}\sqrt{d}$, $\|X^{k+1}\|_F\leq \varsigma \sqrt{d},\forall k$, the Lipschitz constant $L_f$ of $\nabla f$ and Proposition \ref{limprop} (i), there exists a constant $\bar{\beta}$ such that
\begin{equation}\label{convg3}
\|(\eta^k_X,\eta^k_Y)\|_F\leq\bar{\beta}\|(X^{k+1},Y^{k+1})-(X^k,Y^k)\|_F,\forall k\geq\hat{K}.
\end{equation}
Consider that $l(X,Y)$ is a KL function. Then, combining with (\ref{convg1}), (\ref{convg2}) and (\ref{convg3}), by \cite[Theorem 2.9]{Attouch2013}, we obtain $\sum_{k=1}^\infty\|(X^{k+1},Y^{k+1})-(X^k,Y^k)\|_F<\infty$ and $\lim_{k\rightarrow\infty}(X^k,Y^k)=(X^*,Y^*)$. Similar to the analysis of Theorem 2 in \cite{Attouch2009}, by the KL property of $l(X,Y)$ at $(X^*,Y^*)$ with exponent $\alpha\in[0,1)$, we obtain the convergence of $(X^k,Y^k)$ to $(X^*,Y^*)$ finitely, R-linearly or R-sublinearly when the {\L}ojasiewicz exponent $\alpha=0$, $\alpha\in(0,\frac{1}{2}]$ or $\alpha\in(\frac{1}{2},1)$, respectively. 
	
Finally, suppose $f$ is semialgebraic. Since $\hat\Omega^1$ and $\hat\Omega^2$ are semialgebraic sets, according to \cite[Section 4.3]{Attouch2010MOR}, $f(XY^{\mathbb{T}})+\delta_{\hat\Omega^1}(X)+\delta_{\hat\Omega^2}(Y)$ is semialgebraic and hence it is a KL function with a suitable exponent $\alpha\in[0,1)$. Therefore, the convergence rate of $\{(X^k,Y^k)\}$ is at least R-sublinear.
\end{proof}
\begin{remark}
For the matrix completion problem, the popular squared loss function $f(Z)=\frac{1}{2}\Vert \mathcal{P}_{\varGamma}(Z-\bar{Z}^\diamond)\Vert_F^2$ is a polynomial function and hence semialgebraic. Moreover, the Huber loss function $h$ popular in robust regression is a semialgebraic function and the corresponding loss function $f(Z)=\sum_{(i,j)\in\varGamma}h\big((Z-\bar{Z}^\diamond)_{ij}\big)$ is also semialgebraic. Then, based on Corollary \ref{conv_rate_ag1}, when $f$ is the squared loss function or Huber loss function, the sequence generated by Alg.\,\ref{FGL0alg1} is convergent to a strong stationary point of (\ref{FGL0C}) and its convergence rate is at least R-sublinear.
\end{remark}
\begin{remark}
By Theorem \ref{alg1-conv-thm}, we can see that the iterate $(X^k,Y^k)$ generated by Alg.\,\ref{FGL0alg1} with $|\mathcal{K}|=+\infty$ owns the column consistency property and column bound-$\nu$ property after finite iterations, which are properties of the global minimizers of (\ref{FGL0C}). Compared with the methods in \cite{Pan2022factor}, the iterates generated only by Alg.\,\ref{FGL0alg1} are stable with respect to the locations of the nonzero columns and can be convergent to a stationary point with stronger optimality conditions than the limiting-critical point of the considered problem.
\end{remark}

\subsection{Improvements to PALM for problem (\ref{FGL0C})}\label{th_rs_ag2}

In this subsection, we combine the PALM in \cite{Bolte2014Proximal} with line search and sparsity consistency processing procedures to develop an algorithm (Alg.\,\ref{FGL0alg2}) for solving problem (\ref{FGL0C}).
\begin{algorithm}[h!]
\caption{({\bf{Alg.\,\ref{FGL0alg2}}})
PALM with line search for problem (\ref{FGL0C})}\label{FGL0alg2}
{\bf Initialization:} Choose $(X^0,Y^0)\in\Omega^1\times\Omega^2$, $0<c_{\min}<c_{\max}$, $0<\underline{\iota}<\overline{\iota}$, $\rho>1$ and $\mathcal{K}\subseteq\mathbb{N}$. Set $k=0$.
	
{\bf Step 1:} Choose $\iota_{1,k}^B\in[\underline{\iota},\overline{\iota}]$ and $c_{1,k}\in[c_{\min},c_{\max}]$. \\
For $l=1,2,\ldots$, do\\
(1a) let $\iota_{1,k}=\iota_{1,k}^B\rho^{l-1}$;\\
(1b) compute ${\bar{X}}^{k+1}(\iota_{1,k})$ by the following problem,
\begin{equation}\label{FGL0sub12}
\begin{split}
\bar{X}^{k+1}(\iota_{1,k})\in{{\rm{arg}}\min}_{X\in\Omega^1}S^0_{1,k}(X;\iota_{1,k}):=\,&f(X^k{Y^k}^{\mathbb{T}})\\&+\langle\nabla_Zf(X^k{Y^k}^{\mathbb{T}}){Y^k},X-X^k\rangle\\&+\frac{\iota_{1,k}}{2}\|X-X^k\|_F^2+\lambda\,\text{nnzc}(X);
\end{split}
\end{equation}
(1c) if ${\bar{X}}^{k+1}(\iota_{1,k})$ satisfies
\begin{equation}\label{objXineq2}
\begin{split}
f({\bar{X}}^{k+1}(\iota_{1,k}){Y^k}^{\mathbb{T}})+\lambda\,\text{nnzc}({\bar{X}}^{k+1}(\iota_{1,k}))\leq\, &f(X^k{Y^k}^{\mathbb{T}})+\lambda\,\text{nnzc}(X^k)\\&-c_{1,k}\|{\bar{X}}^{k+1}(\iota_{1,k})-X^k\|_F^2,
\end{split}
\end{equation}
set ${\bar{X}}^{k+1}={\bar{X}}^{k+1}(\iota_{1,k})$, $\bar\iota_{1,k}=\iota_{1,k}$ and go to Step 2.\\
{\bf Step 2:} Choose $\iota_{2,k}^B\in[\underline{\iota},\overline{\iota}]$ and $c_{2,k}\in[c_{\min},c_{\max}]$. \\
For $l=1,2,\ldots$, do\\
(2a) let $\iota_{2,k}=\iota_{2,k}^B\rho^{l-1}$;\\
(2b) compute ${\bar{Y}}^{k+1}(\iota_{2,k})$ by the following problem,
\begin{equation}\label{FGL0sub22}
\begin{split}
\bar{Y}^{k+1}(\iota_{2,k})\in{{\rm{arg}}\min}_{Y\in\Omega^2} S^0_{2,k}(Y;\iota_{2,k}):=\,&f(\bar{X}^{k+1}{Y^k}^{\mathbb{T}})\\&+\langle\nabla_Zf(\bar{X}^{k+1}{Y^k}^{\mathbb{T}})^{\mathbb{T}}\bar{X}^{k+1},Y-Y^k\rangle\\&+\frac{\iota_{2,k}}{2}\|Y-Y^k\|_F^2+\lambda\,\text{nnzc}(Y);
\end{split}
\end{equation}
(2c) if ${\bar{Y}}^{k+1}(\iota_{2,k})$ satisfies
\begin{equation}\label{objYineq2}
\begin{split}
f({\bar{X}}^{k+1}({\bar{Y}}^{k+1}(\iota_{2,k}))^{\mathbb{T}})+\lambda\,\text{nnzc}({\bar{Y}}^{k+1}(\iota_{2,k}))\leq\, &f({\bar{X}}^{k+1}{Y^k}^{\mathbb{T}})+\lambda\,\text{nnzc}(Y^k)\\&-c_{2,k}\|{\bar{Y}}^{k+1}(\iota_{2,k})-Y^k\|_F^2,
\end{split}
\end{equation}
set ${\bar{Y}}^{k+1}={\bar{Y}}^{k+1}(\iota_{2,k})$, $\bar\iota_{2,k}=\iota_{2,k}$ and go to Step 3.\\
{\bf Step 3:} Let $(X^{k+1},Y^{k+1})=S(\bar{X}^{k+1},\bar{Y}^{k+1})$ if $k\in\mathcal{K}$, and $(X^{k+1},Y^{k+1})=(\bar{X}^{k+1},\bar{Y}^{k+1})$ otherwise. Update $k\leftarrow k+1$ and return to Step 1.
\end{algorithm}

\begin{remark}\label{proj_ox}
The subproblem (\ref{FGL0sub12}) in Alg.\,\ref{FGL0alg2} is equivalent to the following form
\begin{equation}\label{iter1_cal2}
{\bar{X}}^{k+1}(\iota_{1,k})_i={\rm{prox}}_{\frac{\lambda}{\iota_{1,k}}{\emph{\text{nnzc}}}+\delta_{\Omega^1_i}}(Q^{k+1}_i)={\rm{proj}}_{B_{\Mbound}}{\circ}~{\rm{prox}}_{\frac{\lambda}{\iota_{1,k}}{\emph{\text{nnzc}}}}(Q^{k+1}_i),\forall i\in[d]
\end{equation}
with $Q^{k+1}=X^k-\nabla_Zf(X^k{Y^k}^{\mathbb{T}})Y^k/\iota_{1,k}$, and (\ref{FGL0sub22}) in Alg.\,\ref{FGL0alg2} is equivalent to the following form
\begin{equation}\label{iter2_cal2}
{\bar{Y}}^{k+1}(\iota_{2,k})_i={\rm{prox}}_{\frac{\lambda}{\iota_{2,k}}{\emph{\text{nnzc}}}+\delta_{\Omega^2_i}}(\bar{Q}^{k+1}_i)={\rm{proj}}_{B_{\Mbound}}{\circ}~{\rm{prox}}_{\frac{\lambda}{\iota_{2,k}}{\emph{\text{nnzc}}}}(\bar{Q}^{k+1}_i),\forall i\in[d]
\end{equation}
with $\bar{Q}^{k+1}=Y^k-\nabla_Zf(\bar{X}^{k+1}{Y^k}^{\mathbb{T}})^{\mathbb{T}}\bar{X}^{k+1}/\iota_{2,k}$.
Since for any $\alpha>0$ and $\eta\in\mathbb{R}^l$, it holds
\begin{equation}\label{nnzx_prox}
{\rm{prox}}_{\alpha\,{\emph{\text{nnzc}}}}(\eta)=\left\{
\begin{split}
&\{\bm{0}\},&&\text{if }\|\eta\|<\sqrt{2\alpha}\\
&\big\{\bm{0},\eta\},&&\text{if }\|\eta\|=\sqrt{2\alpha}\\
&\{\eta\},&&\text{if }\|\eta\|>\sqrt{2\alpha},
\end{split}
\right.
\end{equation}
we get $\|\bar{X}^{k+1}_i\|\geq\min\{\Mbound,\sqrt{2\lambda/\bar{\iota}_{1,k}}\},\forall i\in\mathcal{I}_{\bar{X}^{k+1}}$ and $\|\bar{Y}^{k+1}_j\|\geq\min\{\Mbound,\sqrt{2\lambda/\bar{\iota}_{2,k}}\},\forall j\in\mathcal{I}_{\bar{Y}^{k+1}}$.
\end{remark}

Then, based on \cite[Lemma 2]{Bolte2014Proximal} and similar to the proof idea of Proposition \ref{limprop}, we can easily obtain the following proposition for which we omit its proof here.
\begin{proposition}\label{limprop2}
Let $\{(X^k,Y^k)\}$ be the sequence generated by Alg.\,\ref{FGL0alg2}. Define $\iota_{\max}:=\max\{\underline{\iota},\rho(2c_{\max}+L_fd\varsigma^2)\}$. Then,
\begin{itemize}
\item[(i)] $\bar{\iota}_{1,k}\leq\max\{\underline\iota,\rho (2c_{1,k}+L_f\|Y^k\|^2)\}\leq\iota_{\max}$ and $\bar{\iota}_{2,k}\leq\max\{\underline\iota,\rho (2c_{2,k}+L_f\|\bar{X}^{k+1}\|^2)\}\leq\iota_{\max}$;
\item[(ii)] $\lim_{k\rightarrow\infty}F_0(X^k,Y^k)$ exists and $\lim_{k\rightarrow\infty}\|\bar{X}^{k+1}-X^k\|_F=\lim_{k\rightarrow\infty}\|\bar{Y}^{k+1}-Y^k\|_F=0$.
\end{itemize}
\end{proposition}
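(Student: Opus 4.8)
The plan is to follow the template of the proof of Proposition~\ref{limprop}, the only structural difference being that the regularizer in the subproblems~(\ref{FGL0sub12}) and~(\ref{FGL0sub22}) is the nonconvex $\lambda\,\text{nnzc}$ instead of the convex surrogate $\lambda\Theta_{\nu_k,I^k}$; consequently the strong convexity of the inner objective used there is no longer at hand and is replaced by the block descent inequality of~\cite[Lemma~2]{Bolte2014Proximal}. Part~(i) will come from analyzing when the line searches in Steps~1 and~2 stop, and part~(ii) from telescoping the accepted sufficient-decrease inequalities.

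For~(i), fix $k$ and suppose the trial value satisfies $\iota_{1,k}\geq 2c_{1,k}+L_f\|Y^k\|^2$. Since $\bar{X}^{k+1}(\iota_{1,k})$ minimizes $S^0_{1,k}(\cdot;\iota_{1,k})$ over $\Omega^1$ and $X^k\in\Omega^1$, evaluating $S^0_{1,k}$ at these two points and subtracting gives $\langle\nabla_Zf(X^k{Y^k}^{\mathbb{T}})Y^k,\bar{X}^{k+1}(\iota_{1,k})-X^k\rangle\leq-\tfrac{\iota_{1,k}}{2}\|\bar{X}^{k+1}(\iota_{1,k})-X^k\|_F^2+\lambda\big(\text{nnzc}(X^k)-\text{nnzc}(\bar{X}^{k+1}(\iota_{1,k}))\big)$. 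Plugging this into the standard descent estimate for $X\mapsto f(X{Y^k}^{\mathbb{T}})$ (whose gradient is $L_f\|Y^k\|^2$-Lipschitz) and using $\iota_{1,k}-L_f\|Y^k\|^2\geq 2c_{1,k}$ reproduces exactly the acceptance test~(\ref{objXineq2}). Hence the inner loop terminates, and since $\iota_{1,k}$ is increased by successive factors of $\rho>1$ from an initial $\iota_{1,k}^B\in[\underline{\iota},\overline{\iota}]$, the accepted step satisfies $\bar\iota_{1,k}\leq\max\{\underline{\iota},\rho(2c_{1,k}+L_f\|Y^k\|^2)\}$. The same computation with $\bar{X}^{k+1}$ in place of $Y^k$ gives $\bar\iota_{2,k}\leq\max\{\underline{\iota},\rho(2c_{2,k}+L_f\|\bar{X}^{k+1}\|^2)\}$. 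Finally, since $Y^k\in\Omega^2$ and $\bar{X}^{k+1}\in\Omega^1$ have all $d$ columns of Euclidean norm at most $\varsigma$, one has $\|Y^k\|^2\leq\|Y^k\|_F^2\leq d\varsigma^2$ and $\|\bar{X}^{k+1}\|^2\leq d\varsigma^2$; together with $c_{1,k},c_{2,k}\leq c_{\max}$ this yields $\bar\iota_{1,k},\bar\iota_{2,k}\leq\iota_{\max}$.

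For~(ii), add the accepted inequalities~(\ref{objXineq2}) and~(\ref{objYineq2}) and then pass to $(X^{k+1},Y^{k+1})$: in both branches of Step~3 one has $X^{k+1}(Y^{k+1})^{\mathbb{T}}=\bar{X}^{k+1}(\bar{Y}^{k+1})^{\mathbb{T}}$, $\text{nnzc}(X^{k+1})\leq\text{nnzc}(\bar{X}^{k+1})$ and $\text{nnzc}(Y^{k+1})\leq\text{nnzc}(\bar{Y}^{k+1})$, because the sparsity couple $S$ only zeros out columns. Hence $F_0(X^{k+1},Y^{k+1})\leq F_0(X^k,Y^k)-c_{\min}\big(\|\bar{X}^{k+1}-X^k\|_F^2+\|\bar{Y}^{k+1}-Y^k\|_F^2\big)$, so $\{F_0(X^k,Y^k)\}$ is non-increasing. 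It is bounded below because $f$ is continuous on the compact set $\{XY^{\mathbb{T}}:(X,Y)\in\Omega^1\times\Omega^2\}$ and $\text{nnzc}\geq0$, so $\lim_{k\to\infty}F_0(X^k,Y^k)$ exists; summing the displayed inequality over $k$ forces $\sum_k\big(\|\bar{X}^{k+1}-X^k\|_F^2+\|\bar{Y}^{k+1}-Y^k\|_F^2\big)<\infty$, whence $\|\bar{X}^{k+1}-X^k\|_F\to0$ and $\|\bar{Y}^{k+1}-Y^k\|_F\to0$.

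The one place where care is needed — and it is only bookkeeping — is the loss of strong convexity of the inner objective: because $\text{nnzc}$ is nonconvex I cannot invoke the estimate $S^0_{1,k}(X)\geq S^0_{1,k}(\bar{X}^{k+1})+\tfrac{\iota_{1,k}}{2}\|X-\bar{X}^{k+1}\|_F^2$ that underlies Proposition~\ref{limprop}(i), which is why~(i) here gives the bound $2c_{1,k}+L_f\|Y^k\|^2$ rather than $\max\{2c_{1,k},L_f\|Y^k\|^2\}$. One also has to note that only the objective-value inequality $S^0_{1,k}(\bar{X}^{k+1};\iota_{1,k})\leq S^0_{1,k}(X^k;\iota_{1,k})$ is used — which holds for any element of the set of minimizers, nonempty by Remark~\ref{proj_ox} — so the ``$\in$'' in~(\ref{FGL0sub12})--(\ref{FGL0sub22}) causes no difficulty.
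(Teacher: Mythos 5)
Your proof is correct and follows exactly the route the paper indicates for this result (the paper omits the details, citing \cite[Lemma 2]{Bolte2014Proximal} and the template of Proposition \ref{limprop}): you replace the strong-convexity step of Proposition \ref{limprop}(i) with the descent-lemma argument applied to the mere minimality of $\bar{X}^{k+1}(\iota_{1,k})$, which correctly explains why the threshold becomes $2c_{1,k}+L_f\|Y^k\|^2$ rather than $\max\{2c_{1,k},L_f\|Y^k\|^2\}$, and part (ii) is the same telescoping of the accepted sufficient-decrease inequalities combined with the fact that $S$ preserves the product $\bar{X}^{k+1}(\bar{Y}^{k+1})^{\mathbb{T}}$ and does not increase $\text{nnzc}$. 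No gaps beyond those already present in the paper's own statement.
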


\begin{assumption}\label{mu-nu-new}
$\nu$ in (\ref{FGL0R}) satisfies $\nu\in\Big(0,\min\{\lambda/\kappa,\Mbound,\sqrt{2\lambda/\iota_{\max}}\}\Big)$.
\end{assumption}
Under Assumption \ref{mu-nu-new} on $\nu$, we show the convergence results for Alg.\,\ref{FGL0alg2} as follows.
\begin{theorem}\label{alg2-conv-thm}
Let $\nu$ satisfy Assumption \ref{mu-nu-new}. Then, the following results hold for the sequence $\{(X^k,Y^k)\}$ generated by Alg.\,\ref{FGL0alg2}.
\begin{itemize}
\item[(i)] $\|\bar{X}^{k+1}_i\|\geq\min\{\Mbound,\sqrt{2\lambda/\iota_{\max}}\},\forall i\in\mathcal{I}_{\bar{X}^{k+1}}$ and $\|\bar{Y}^{k+1}_j\|\geq\min\{\Mbound,\sqrt{2\lambda/\iota_{\max}}\},\forall j\in\mathcal{I}_{\bar{Y}^{k+1}}$.
\item[(ii)] The statements (i), (ii) and (iii) in Theorem \ref{alg1-conv-thm} hold.
\item[(iii)] For the global convergence of $\{(X^k,Y^k)\}$, the statements in Corollary \ref{conv_rate_ag1} hold for Alg.\,\ref{FGL0alg2}.
\end{itemize}
\end{theorem}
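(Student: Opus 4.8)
The plan is to transcribe the arguments of Theorem~\ref{alg1-conv-thm} and Corollary~\ref{conv_rate_ag1} almost verbatim, replacing $\Theta_\nu$ by $\text{nnzc}$ throughout and, most importantly, replacing the role played there by the adaptive-indicator machinery (used in Alg.\,\ref{FGL0alg1} to produce the column bound-$\nu$ property) with the hard-thresholding information already recorded in part~(i) of this theorem. Statement~(i) itself follows at once from the proximal characterization \eqref{iter1_cal2}--\eqref{nnzx_prox} in Remark~\ref{proj_ox}, which gives $\|\bar X^{k+1}_i\|\ge\min\{\varsigma,\sqrt{2\lambda/\bar\iota_{1,k}}\}$ for $i\in\mathcal{I}_{\bar X^{k+1}}$ (and likewise for $\bar Y^{k+1}$), combined with the uniform bound $\bar\iota_{1,k},\bar\iota_{2,k}\le\iota_{\max}$ from Proposition~\ref{limprop2}(i). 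Since Assumption~\ref{mu-nu-new} forces $\nu\le\min\{\varsigma,\sqrt{2\lambda/\iota_{\max}}\}=:c_0$ and the sparsity consistency map $S$ only zeros out columns, every $(\bar X^k,\bar Y^k)$ and every $(X^k,Y^k)$ automatically enjoys the column bound-$\nu$ property, with nonzero columns of norm $\ge c_0\ge\nu$.

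For statement~(ii) I would re-run the proof of Theorem~\ref{alg1-conv-thm}. From Proposition~\ref{limprop2}(ii) one has $\|\bar X^{k+1}-X^k\|_F,\|\bar Y^{k+1}-Y^k\|_F\to0$, so for large $k$ the per-column increments are below $c_0/2$; together with the fact that every nonzero column of $X^k$ and of $\bar X^{k+1}$ has norm $\ge c_0$, this forces $\mathcal{I}_{\bar X^{k+1}}=\mathcal{I}_{X^k}$ and $\mathcal{I}_{\bar Y^{k+1}}=\mathcal{I}_{Y^k}$ for all large $k$. Consequently the combined support size $|\mathcal{I}_{X^k}|+|\mathcal{I}_{Y^k}|$ is eventually non-increasing (unchanged when $k\notin\mathcal{K}$, dropping to $2|\mathcal{I}_{X^k}\cap\mathcal{I}_{Y^k}|$ when $k\in\mathcal{K}$), hence stabilizes; once stabilized, any step with $k\in\mathcal{K}$ forces $\mathcal{I}_{X^k}=\mathcal{I}_{Y^k}$, yielding constant supports $\hat{\mathcal{I}}_X,\hat{\mathcal{I}}_Y$, the identities $X^k=\bar X^k$, $Y^k=\bar Y^k$ for large $k$, and — when $|\mathcal{K}|=+\infty$ — also $\hat{\mathcal{I}}_X=\hat{\mathcal{I}}_Y$, i.e. column consistency. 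This establishes the analogues of Theorem~\ref{alg1-conv-thm}(i),(ii). For the analogue of (iii), take an accumulation point $(X^*,Y^*)$ along $\{k_l\}$; the Fermat condition for \eqref{FGL0sub12} reads $\bm 0\in\nabla_Zf(X^k{Y^k}^{\mathbb{T}})Y^k+\bar\iota_{1,k}(\bar X^{k+1}-X^k)+\partial h_1(\bar X^{k+1})$, and by Proposition~\ref{subd-cri-rela}(i) together with $\mathcal{I}_{\bar X^{k+1}}=\hat{\mathcal{I}}_X$ for large $k$, restricting to $\hat{\mathcal{I}}_X$ kills the $\text{nnzc}$ part and leaves $\bm 0\in[\nabla_Zf(X^k{Y^k}^{\mathbb{T}})Y^k]_{\hat{\mathcal{I}}_X}+\bar\iota_{1,k}(\bar X^{k+1}_{\hat{\mathcal{I}}_X}-X^k_{\hat{\mathcal{I}}_X})+N_{\Omega^1_{\hat{\mathcal{I}}_X}}(\bar X^{k+1}_{\hat{\mathcal{I}}_X})$; passing to the limit (using $\|X^{k+1}-X^k\|_F\to0$, boundedness of $\bar\iota_{1,k}$, continuity of $\nabla f$, closedness of normal cones, and symmetrically for $Y$) and extracting the $\mathcal{I}^*:=\mathcal{I}_{X^*}\cap\mathcal{I}_{Y^*}$ block yields the stationarity inclusions of Definition~\ref{Lsta}, so $(X^*,Y^*)\in\mathcal{S}_{(\ref{FGL0C})}$. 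Combining with the column bound-$\nu$ property (part (i)), column consistency when $|\mathcal{K}|=+\infty$, and Proposition~\ref{FGL0C-prps}(iii) with Theorem~\ref{stat-loc}(iii) when $|\mathcal{K}|<+\infty$, gives $(X^*,Y^*)\in\mathcal{S}^s_{(\ref{FGL0C})}$, resp. $S(X^*,Y^*)\in\mathcal{S}^s_{(\ref{FGL0C})}$.

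For statement~(iii), once the support has stabilized the iterates of Alg.\,\ref{FGL0alg2} solve, for large $k$, exactly the same smooth restricted subproblems over $\hat\Omega^1\times\hat\Omega^2$ as in Remark~\ref{chag_fsb}, so the three ingredients of the Kurdyka--{\L}ojasiewicz framework go through: sufficient decrease from the line-search inequalities \eqref{objXineq2}/\eqref{objYineq2} together with $F_0(X^{k+1},Y^{k+1})\le F_0(\bar X^{k+1},\bar Y^{k+1})$ and constancy of $\text{nnzc}$ on the tail; a relative-error bound $\|(\eta^k_X,\eta^k_Y)\|_F\le\bar\beta\|(X^{k+1},Y^{k+1})-(X^k,Y^k)\|_F$ with $(\eta^k_X,\eta^k_Y)\in\partial l(X^{k+1},Y^{k+1})$, obtained from Lipschitz continuity of $\nabla f$ and the uniform bound on $\bar\iota_{1,k},\bar\iota_{2,k}$; and the continuity condition on $l(X,Y)=f(XY^{\mathbb{T}})+\delta_{\hat\Omega^1}(X)+\delta_{\hat\Omega^2}(Y)$ along the sequence. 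Invoking \cite[Theorem~2.9]{Attouch2013} and \cite[Theorem~2]{Attouch2009} then gives finite length, convergence to a single $(X^*,Y^*)$, and the finite/R-linear/R-sublinear trichotomy according to the KL exponent; since $\hat\Omega^1,\hat\Omega^2$ are semialgebraic, $l$ is semialgebraic whenever $f$ is, hence a KL function with some exponent, so the rate is at least R-sublinear.

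The main obstacle is the support-stabilization step inside (ii): for Alg.\,\ref{FGL0alg1} it rested on the adaptive indicator making $\partial_X\Theta_{\nu,I^k}$ equal $\tfrac1\nu\partial\|\cdot\|$ on the small columns, whereas here it must be re-derived purely from the thresholding formula \eqref{nnzx_prox} and carefully interleaved with the map $S$ — in particular one must verify that the combined support size is eventually monotone and that $|\mathcal{K}|=+\infty$ genuinely forces $\hat{\mathcal{I}}_X=\hat{\mathcal{I}}_Y$. Everything else is a routine transcription of the proofs already given for Alg.\,\ref{FGL0alg1}.
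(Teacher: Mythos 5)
Your proposal is correct and follows essentially the same route as the paper: statement (i) from Remark \ref{proj_ox} together with the bound $\bar\iota_{1,k},\bar\iota_{2,k}\leq\iota_{\max}$ of Proposition \ref{limprop2}(i), support stabilization in (ii) from the uniform lower bound on nonzero column norms combined with $\|\bar{X}^{k+1}-X^k\|_F,\|\bar{Y}^{k+1}-Y^k\|_F\to0$ and the nesting $\mathcal{I}_{X^{k+1}}\subseteq\mathcal{I}_{\bar{X}^{k+1}}=\mathcal{I}_{X^k}$, and (iii) by rerunning the KL argument of Corollary \ref{conv_rate_ag1}. The only cosmetic difference is that for stationarity of accumulation points you rederive the optimality conditions of the subproblems directly (mirroring the proof of Theorem \ref{alg1-conv-thm}(iii)) whereas the paper simply invokes \cite[Lemma 5]{Bolte2014Proximal}; both yield $(X^*,Y^*)\in\mathcal{S}_{(\ref{FGL0C})}$.
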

\begin{proof}
(i) By Remark \ref{proj_ox} and $\bar{\iota}_{1,k}\leq\iota_{\max}$ and $\bar{\iota}_{2,k}\leq\iota_{\max}$ in Proposition \ref{limprop2} (i), we obtain the statement (i).
	
(ii) Based on (i) and Assumption \ref{mu-nu-new}, we deduce that $(X^k,Y^k)$ owns the column bound-$\nu$ property.
It follows from (i) that $\|\bar{X}^{k+1}-X^k\|_F\geq\min\{\Mbound,\sqrt{2\lambda/\iota_{\max}}\}$ if $\mathcal{I}_{\bar{X}^{k+1}}\neq\mathcal{I}_{X^k}$, and $\|\bar{Y}^{k+1}-Y^k\|_F\geq\min\{\Mbound,\sqrt{2\lambda/\iota_{\max}}\}$ if $\mathcal{I}_{\bar{Y}^{k+1}}\neq\mathcal{I}_{Y^k}$. Combining with $\lim_{k\rightarrow\infty}\|\bar{X}^{k+1}-X^k\|_F=\lim_{k\rightarrow\infty}\|\bar{Y}^{k+1}-Y^k\|_F=0$ in Proposition \ref{limprop2} (ii), we deduce that there exist $\tilde{K}>0$ such that $\mathcal{I}_{\bar{X}^{k+1}}=\mathcal{I}_{X^k}$ and $\mathcal{I}_{\bar{Y}^{k+1}}=\mathcal{I}_{Y^k},\forall k\geq\tilde{K}$. It then follows from $\mathcal{I}_{X^{k+1}}\subseteq\mathcal{I}_{\bar{X}^{k+1}}$ and $\mathcal{I}_{Y^{k+1}}\subseteq\mathcal{I}_{\bar{Y}^{k+1}}$ that $\mathcal{I}_{X^{k+1}}\subseteq\mathcal{I}_{X^{k}}$ and $\mathcal{I}_{Y^{k+1}}\subseteq\mathcal{I}_{Y^{k}},\forall k\geq\tilde{K}$. Consequently, there exist $\hat{K}>0$ and $\hat{\mathcal{I}}_X,\hat{\mathcal{I}}_Y\subseteq[d]$ such that $\mathcal{I}_{\bar{X}^k}=\mathcal{I}_{X^k}=\hat{\mathcal{I}}_X$ and $\mathcal{I}_{\bar{Y}^k}=\mathcal{I}_{Y^k}=\hat{\mathcal{I}}_Y,\forall k\geq\hat{K}$. This implies that $(X^k,Y^k)=(\bar{X}^k,\bar{Y}^k),\forall k\geq\hat{K}$. Then, the first statement of (ii) holds.
	
Similar to the proof of Theorem \ref{alg1-conv-thm} (ii), we have the second statement of (ii).
	
Let $(X^*,Y^*)$ be an arbitrary accumulation point of $\{(X^k,Y^k)\}$. By the analysis in \cite[Lemma 5]{Bolte2014Proximal}, we deduce that $(X^*,Y^*)\in\mathcal{S}_{(\ref{FGL0C})}$. It follows from the first two statements of (ii) that the last stated result holds.
	
(iii) Similar to the analysis in Corollary \ref{conv_rate_ag1}, we obtain the statement (iii).
\end{proof}
\begin{remark}\label{PALMcom}
When $\mathcal{K}=\emptyset$, $\iota_{1,k}^B=\max\{\underline{\iota},c_{\min}+L_f\|Y^k\|^2\}$, $\iota_{2,k}^B=\max\{\underline{\iota},c_{\min}+L_f\|\bar{X}^{k+1}\|^2\}$, $c_{1,k}=c_{2,k}=c_{\min}$, based on the theoretical analysis in Proposition \ref{limprop2} (i), there is only one inner iteration in Step 1 and Step 2 of Alg.\,\ref{FGL0alg2}, respectively, and hence Alg.\,\ref{FGL0alg2} becomes the PALM in \cite{Bolte2014Proximal}. Compared with the analysis in \cite{Bolte2014Proximal}, for the iterates generated by the PALM, there are some further progress on its convergence for problem (\ref{FGL0C}), as explained below.
\begin{itemize}
\item As shown in Theorem \ref{alg2-conv-thm} (ii), the column sparsity of the iterates does not change after finite iterations.
\item Any accumulation point of the iterates owns the column bound-$\nu$ property by Theorem \ref{alg2-conv-thm} (ii).
\item The global convergence can be guaranteed by the assumption on the loss function $f$ as shown in Theorem \ref{alg1-conv-thm} (iii), rather than that of the objective function $F_0$ as in \cite{Bolte2014Proximal}. This facilitates the estimation on the convergence rate of the proposed algorithm from the {\L}ojasiewicz exponent of $f$ rather than $F_0$.
\end{itemize}
Moreover, by Theorem \ref{alg2-conv-thm} (ii), the sparsity consistency processing procedure in Step 3 of Alg.\,\ref{FGL0alg2} makes the iterates also own the column consistency property after finite iterations.
\end{remark}
\begin{remark}
Alg.\,\ref{FGL0alg1} and Alg.\,\ref{FGL0alg2} belong to the class of alternating majorization-minimization (MM) methods. However, in view of the special design, such as the column consistency processing and adaptively updated proximal operators, it is not 
possible to directly apply the basic properties of MM method to Alg.\,\ref{FGL0alg1} and Alg.\,\ref{FGL0alg2}. Moreover, due to Theorem \ref{stat-loc} (ii) and Definition \ref{col-bd-p-def}, we have that any global minimizer of (\ref{FGL0C}) owns the column bound-$\alpha$ property for any $\alpha\in(0,\nu]$, where $\nu$ fulfills Assumption \ref{mu-nu}. Notably, the condition on $\nu$ in Assumption \ref{mu-nu-new} is stricter than those in Assumption \ref{mu-nu}. Then, comparing Theorem \ref{alg1-conv-thm} with Theorem \ref{alg2-conv-thm}, it can be seen that the accumulation points of the iterates generated by Alg.\,\ref{FGL0alg1} are more likely to own the stronger optimality conditions than those of Alg.\,\ref{FGL0alg2}.
\end{remark}

\subsection{Dimension reduction (DR) method for Alg.\,\ref{FGL0alg1} and Alg.\,\ref{FGL0alg2}}\label{CR_mtd}

In this subsection, we will give an adaptive dimension reduction method to reduce the computational complexity of Alg.\,\ref{FGL0alg1} and Alg.\,\ref{FGL0alg2} without affecting their theoretical results in subsections \ref{th_rs_ag1} and \ref{th_rs_ag2}.

In view of \cite[Theorem 1]{Yu2013NIPS}, the subproblem (\ref{FGL0sub1}) in Alg.\,\ref{FGL0alg1} is equivalent to (\ref{iter1_cal2}) with $\text{nnzc}$ replaced by $\theta_{\nu_k,I^k_i}$, and (\ref{FGL0sub2}) in Alg.\,\ref{FGL0alg1} is equivalent to (\ref{iter2_cal2}) with $\text{nnzc}$ replaced by $\theta_{\nu_k,J^k_i}$. Combining with the calculation of subproblems in Alg.\,\ref{FGL0alg2} as shown in Remark \ref{proj_ox}, we obtain that for Alg.\,\ref{FGL0alg1} and Alg.\,\ref{FGL0alg2}, ${\bar{X}}^{k+1}(\iota_{1,k})_i=\bm{0}$ if $Q^{k+1}_i=\bm{0}$ and ${\bar{Y}}^{k+1}(\iota_{2,k})_{j}=\bm{0}$ if $\bar{Q}^{k+1}_{j}=\bm{0}$, $\forall i,j\in[d]$ and $k\in\mathbb{N}$.

The low rankness of the matrix in problem (\ref{LRMPR}) corresponds to the column sparsity of the matrices in problem (\ref{FGL0C}). For Alg.\,\ref{FGL0alg1} and Alg.\,\ref{FGL0alg2} to solve problem (\ref{FGL0C}), as the iteration $k$ increases, $\mathcal{I}_{X^k}$ and $\mathcal{I}_{Y^k}$ become smaller. Let $\bar{\mathcal{I}}_k=\mathcal{I}_{X^k}\cup\mathcal{I}_{Y^k}$. It follows from $X^{k}_{\bar{\mathcal{I}}_k^c}=\bm{0}$ and $Y^{k}_{\bar{\mathcal{I}}_k^c}=\bm{0}$ that $Q^{k+1}_{\bar{\mathcal{I}}_k^c}=\bm{0}$ and hence $\bar{X}^{k+1}_{\bar{\mathcal{I}}_k^c}=\bm{0}$. Combining $Y^{k}_{\bar{\mathcal{I}}_k^c}=\bm{0}$ and $\bar{X}^{k+1}_{\bar{\mathcal{I}}_k^c}=\bm{0}$, we also obtain $\bar{Q}^{k+1}_{\bar{\mathcal{I}}_k^c}=\bm{0}$ and hence $\bar{Y}^{k+1}_{\bar{\mathcal{I}}_k^c}=\bm{0}$. These further imply that $X^{k+1}_{\bar{\mathcal{I}}_k^c}=\bm{0}$ and $Y^{k+1}_{\bar{\mathcal{I}}_k^c}=\bm{0}$. Consequently, in the $k$th iteration, $(X^{k+1}_{\bar{\mathcal{I}}_k},Y^{k+1}_{\bar{\mathcal{I}}_k})$ generated by Alg.\,\ref{FGL0alg1} (Alg.\,\ref{FGL0alg2}) corresponds to solving the following problem with reduced number of columns (which is $|\bar{\mathcal{I}}_k|$):
\begin{equation}\label{rd_subq}
\min_{(X^r,Y^r)\in\Omega^1_{\bar{\mathcal{I}}_k}\times\Omega^2_{\bar{\mathcal{I}}_k}}f(X^r(Y^r)^{\mathbb{T}})+\lambda\big(\text{nnzc}(X^r)+\text{nnzc}(Y^r)\big).
\end{equation}
As a result, in each iteration of Alg.\,\ref{FGL0alg1} and Alg.\,\ref{FGL0alg2}, we can adaptively reduce the dimension of the matrices in problem (\ref{FGL0C}) by determining $\mathcal{I}_{X^k}$ and $\mathcal{I}_{Y^k}$. Since this dimension reduction process does not affect the objective function value, it not only maintain the original calculation result, but also reduce the computational cost for the rank regularized problem. In particular, the sparsity consistency processing procedure can further reduce the dimension of problem \eqref{rd_subq} to further reduce the total computational cost of the proposed algorithms.

\section{Numerical Experiments}\label{section6}

In this section, we verify some theoretical results of Alg.\,\ref{FGL0alg1} and Alg.\,\ref{FGL0alg2}, and compare Alg.\,\ref{FGL0alg1}, Alg.\,\ref{FGL0alg2} (with the DR method in subsection \ref{CR_mtd}) with the algorithms in \cite{Bolte2014Proximal,Pan2022factor} for low-rank matrix completion problems under non-uniform sampling settings. Throughout this section, we use Alg.\,\ref{FGL0alg1} and Alg.\,\ref{FGL0alg2} to solve the following problem,
\begin{equation}\label{FGL0_ne}
\min_{(X,Y)\in\Omega^1\times\Omega^2}F_0(X,Y):=f(XY^\mathbb{T})+\lambda\big(\text{nnzc}(X)+\text{nnzc}(Y)\big),
\end{equation}
where $f(Z):=\frac{1}{2}\|\mathcal{P}_{\varGamma}(Z-\bar{Z}^\diamond)\|_F^2$, $\varGamma\subseteq[m]\times [n]$ is the index set of observed entries, $\bar{Z}^\diamond\in\mathbb{R}^{m\times n}$, $\Omega^1=\{X\in\mathbb{R}^{m\times d}:X_i\in B_{\Mbound},i\in[d]\}$ and $\Omega^2=\{Y\in\mathbb{R}^{n\times d}:Y_i\in B_{\Mbound},i\in[d]\}$ with $\varsigma=10^2\|\mathcal{P}_{\varGamma}(\bar{Z}^\diamond)\|_F^{1/2}$ and $d=\min\{100,\lceil 0.5\min\{m,n\}\rceil\}$.

The initial point $(X^0,Y^0)$ in Alg.\,\ref{FGL0alg1} and Alg.\,\ref{FGL0alg2} is set to be the 
same as that in \cite{Pan2022factor}, which is only related to the observation $\mathcal{P}_{\varGamma}(\bar{Z}^\diamond)$. By the singular value decomposition of $\mathcal{P}_{\varGamma}(\bar{Z}^\diamond)$, we obtain $\mathcal{P}_{\varGamma}(\bar{Z}^\diamond)=U\Sigma V^{\mathbb{T}}$ and then let $X^0=[\sqrt{\sigma_1}U_1,\ldots,\sqrt{\sigma_d}U_d]$ and $Y^0=[\sqrt{\sigma_1}V_1,\ldots,\sqrt{\sigma_d}V_d]$, where $\sigma_i$ is the $i$-th diagonal element of $\Sigma$. The stopping criterion for Alg.\,\ref{FGL0alg1} and Alg.\,\ref{FGL0alg2} is set as
\begin{equation*}
\frac{|F_0(X^k,Y^k)-F_0(X^{k-1},Y^{k-1})|}{\max\{1,F_0(X^k,Y^k)\}}\leq \text{Tol}
\end{equation*}
with $\text{Tol}=10^{-7}$ for simulated data and $\text{Tol}=10^{-4}$ for real data unless otherwise specified, and $k=100$ is also a stopping criterion in subsection \ref{comp_simu} and subsection \ref{real_data}.

{\bf{Parameter settings in Alg.\,\ref{FGL0alg1} and Alg.\,\ref{FGL0alg2} :}} According to the theoretical requirements, we fix the parameters in Alg.\,\ref{FGL0alg1} and Alg.\,\ref{FGL0alg2} as follows. Let $\nu=0.99\cdot\min\Big\{\Mbound,\frac{\lambda}{\Mbound(d\varsigma^2+\|\mathcal{P}_{\varGamma}(\bar{Z}^\diamond)\|_F)}\Big\}$. Furthermore, for the same parameters in Alg.\,\ref{FGL0alg1} and Alg.\,\ref{FGL0alg2}, set $\rho=2$, $\mathcal{K}=\{0,1,\ldots,9\}$, $c_{\min}=\underline{\iota}=10^{-5}$, $c_{\max}=\overline{\iota}=3d\varsigma^2$, $c_{1,k}=\max\{c_{\min},\|Y^k\|^2/5\}$ and $c_{2,k}=\max\{c_{\min},\|\bar{X}^{k+1}\|^2/5\}$. Moreover, $\nu_k$, $\iota_{1,k}^B$ and $\iota_{2,k}^B$ in each iteration are set as in Table \ref{para_set}.
\begin{table}[h]
\renewcommand{\arraystretch}{1.5}
\setlength{\belowcaptionskip}{-0.05cm}
\centering
\scriptsize
\caption{$\nu_k$ in Alg.\,\ref{FGL0alg1} and $\iota_{1,k}^B$, $\iota_{2,k}^B$ in both Alg.\,\ref{FGL0alg1} and Alg.\,\ref{FGL0alg2} for problem (\ref{FGL0_ne})}\label{para_set}
\begin{tabular}{|cc|ccc|}
\hline
iteration &$\nu_k$ &iteration &$\iota_{1,k}^B$&$\iota_{2,k}^B$\\
\hline
$k=0$
&$\sqrt{\sigma_1}$
&$k< 10$ 
&$\max\{\underline{\iota},\|Y^k\|^2/2\}$
&$\max\{\underline{\iota},\|\bar{X}^{k+1}\|^2/2\}$
\\
\hline
$k>0$
&$\nu$
&$k\geq10$ 
&$\max\{\underline{\iota},\|Y^k\|^2/4\}$
&$\max\{\underline{\iota},\|\bar{X}^{k+1}\|^2/4\}$
\\
\hline
\end{tabular}
\end{table}

Inspired by the setting of the largest $\lambda$ in \cite[subsection 5.2]{Pan2022factor}, we design the adaptive choices of $\lambda$ in the next paragraph. Before that, we introduce
some notations. For the given $\mathcal{P}_{\varGamma}(\bar{Z}^\diamond)$, set $Q^1$ as in Remark \ref{proj_ox} and define $q:=[\|Q^1_1\|,\ldots,\|Q^1_d\|]^\downarrow$. 
For $j\in[2]$, define $\beta_j : [d]\rightarrow[0,+\infty)$ as $\beta_1(r)=\iota_{1,0}^B\nu_0\sqrt{q_{r+1}}$, $\beta_2(r)=\iota_{1,0}^Bq_{r+1}/2,\forall r\in[d-1]$ and $\beta_j(d)=0.99\cdotp\beta_j(d-1)$. Note that $\beta_j$ will be applied to the setting of $\lambda$ in subsections \ref{lambd-cons} and \ref{comp_palm}, and the following tuning strategy for Alg.\,\ref{FGL0alg1} and Alg.\,\ref{FGL0alg2}, respectively.

{\bf{Adaptive tuning strategy on $\lambda$ for model (\ref{FGL0_ne}):}}
Let $q_{\text{diff}}=[q_1-q_2,\ldots,q_{d-1}-q_d]$, $q_{\text{rela}}=[(q_1-q_2)/\max\{q_2,0.001\},\ldots,(q_{d-1}-q_d)/\max\{q_d,0.001\}]$, 
$r_s=\min(\{i\in[d-1]:(q_{\text{diff}})_i\geq (q_{\text{diff}}^\downarrow)_5\}\cap\{i\in[d-1]:(q_{\text{rela}})_i\geq (q_{\text{rela}}^\downarrow)_2\})$. Set $r_0=r_s$ if $r_s\neq\emptyset$ and $1$ otherwise. Fix $n_\lambda\in[d]$ and $\delta_1,\delta_2\in(0,+\infty)$.
Generate $n_\lambda+1$ different $\lambda$ as follows.

Method 1 (average rank): For $j\in[2]$, $\lambda_{j,0}:=\beta_j(1)$ and $\lambda_{j,i}:=\beta_j(\lceil d\cdotp i/n_\lambda\rceil),\forall i\in[n_\lambda]$.

Method 2 (average distance): For $j\in[2]$, $\lambda_{j,i}:=\beta_j(r_0)-(\beta_j(r_0)-\beta_j(d))\cdotp i/n_\lambda, \forall i\in\{0\}\cup[n_\lambda]$.

Adopt Method 1 if $r_0=1$ and Method 2 otherwise to generate $\lambda_{1,i}$ ($\lambda_{2,i}$) for Alg.\,\ref{FGL0alg1} (Alg.\,\ref{FGL0alg2}). Moreover, denote $(X_{\lambda_{1,i}},Y_{\lambda_{1,i}})$ ($(X_{\lambda_{2,i}},Y_{\lambda_{2,i}})$) as the output of Alg.\,\ref{FGL0alg1} (Alg.\,\ref{FGL0alg2}) for problem (\ref{FGL0_ne}) with $\lambda=\lambda_{1,i}$ ($\lambda_{2,i}$).

{\bf{Output choice}}:
For the predefined functions $\zeta_1(i)$ and $\zeta_2(i)$, we check the following criteria for $i\in[n_\lambda]$ in order to determine the output.\\
Criterion 1: Output $(X_{\lambda_{j,i}},Y_{\lambda_{j,i}})$ if $\zeta_j(i)>\delta_1$.\\
Criterion 2: Output $(X_{\lambda_{j,i^*}},Y_{\lambda_{j,i^*}})$ if $\zeta_j(i)\neq0$ and $\zeta_j(i^*)/\zeta_j(i)>\delta_2$ with $i^*=\max\{t\in[i-1]:\zeta_j(t)\neq0\}$.

Next, for $j\in[2]$, we explain the definition of $\zeta_j:\{0\}\cup[n_\lambda]\rightarrow\mathbb{R}$ used in the experiments. Set $\zeta_j(0)=0$ and for any $i\in[n_\lambda]$, let
\begin{equation*}
\zeta_j(i)=\left\{
\begin{split}
&\frac{|(\text{loss}_j(i)-\text{loss}_j(i-1))|}{|(\text{loss}_j(i)(\text{nz}_j(i)-\text{nz}_j(i-1))|}&&\text{if }\text{loss}_j(i)(\text{nz}_j(i)-\text{nz}_j(i-1))\neq0\\
&0&&\text{otherwise},
\end{split}\right.
\end{equation*}
where $\text{nz}_j$ and $\text{loss}_j:\{0\}\cup[n_\lambda]\rightarrow\mathbb{R}$ are defined by $\text{nz}_j(0)=0$, $\text{nz}_j(1)=\min\{0.1d,\text{nnzc}(X_{\lambda_{j,i}})\}$, $\text{loss}_j(0)=\max\{0.9f(X_{\lambda_{j,1}}Y_{\lambda_{j,1}}^{\mathbb{T}}),f(\bm{0})\}$, $\text{loss}_j(1)=f(X_{\lambda_{j,1}}Y_{\lambda_{j,1}}^{\mathbb{T}})$ and for any $i\in[n_\lambda]\setminus\{1\}$,

\begin{equation*}
{\text{loss}_j(i)=\left\{
\begin{split}
&\text{loss}_j(i-1)&&\text{if }\lambda_{j,i}\geq\beta_j(\text{nz}_j(i-1))\\
&f(X_{\lambda_{j,i}}Y_{\lambda_{j,i}}^{\mathbb{T}})&&\text{otherwise},
\end{split}
\right.}
\end{equation*}
\begin{equation*}
{\text{nz}_j(i)=\left\{
\begin{split}
&\text{nz}_j(i-1)&&\text{if }\lambda_{j,i}\geq\beta_j(\text{nz}_j(i-1))\\
&\text{nnzc}(X_{\lambda_{j,i}})&&\text{otherwise}.
\end{split}
\right.}
\end{equation*}

{\bf{Non-uniform sampling schemes:}} The non-uniform sampling schemes are defined in \cite{Fang2018MP} and also used in \cite{Pan2022factor}. Denote $\pi_{kl}$ the probability that the $(k,l)$-th entry of $\bar{Z}^\diamond$ is sampled. For each $(k,l)\in[m]\times[n]$, set $\pi_{kl}=p_kp_l$ with $p_k$ (and $p_l$) as
\begin{equation*}
\textrm{Scheme 1}\!:\
p_k=\!\left\{\begin{array}{ll}
2p_0& {\rm if}\ k\le\frac{n}{10} \\
4p_0& {\rm if}\ \frac{n}{10}\le k\le \frac{n}{5}\\
p_0& {\rm otherwise}\\
\end{array}\right.\ {\rm or}\ \
\textrm{Scheme 2}\!:\ p_k=\!
\left\{\begin{array}{ll}
3p_0& {\rm if}\ k\le\frac{n}{10} \\
9p_0& {\rm if}\ \frac{n}{10}\le k\le \frac{n}{5}\\
p_0& {\rm otherwise},\\
\end{array}\right.
\end{equation*}
where $p_0>0$ is a normalizing constant such that $\sum_{k=1}^{n}p_k=1$.

{\bf{Algorithms for comparison:}} Recently, an alternating majorization-minimization (AMM) algorithm with extrapolation and a hybrid AMM (HAMM) algorithm are proposed in \cite{Pan2022factor} to solve the following column $\ell_{2,0}$-norm regularized model:
\begin{equation}\label{MS-FL20}
\min_{X\in \mathbb{R}^{n\times d},Y\in\mathbb{R}^{m\times d}}f(XY^\mathbb{T}\!)+\frac{\mu}{2}\big(\|X\|_F^2+\|Y\|_F^2\big)+\lambda\big(\|X\|_{2,0}+\|Y\|_{2,0}\big),
\end{equation}
where $\mu>0$ is a tiny constant, $f$ is smooth and its gradient is globally Lipschitz continuous. In the numerical experiments of \cite{Pan2022factor}, the authors also considered (\ref{MS-FL20}) with $f(Z)=\frac{1}{2}\Vert \mathcal{P}_{\varGamma}(Z-\bar{Z}^\diamond)\Vert_F^2$. As shown by the experiments in \cite{Pan2022factor}, compared with the nuclear norm regularized factorization model in \cite{hastie2015matrix} and the max-norm regularized convex model in \cite{Fang2018MP}, model (\ref{MS-FL20}) has the competitive advantage in offering solutions with lower error, smaller rank and less time for matrix completion problems under the non-uniform sampling schemes by the AMM and HAMM algorithms. Compared with (\ref{MS-FL20}), the considered model (\ref{FGL0C}) is more faithful in representing the rank regularized problem (\ref{LRMPR}). These motivate us to compare the performance of Alg.\,\ref{FGL0alg1}, Alg.\,\ref{FGL0alg2}, with the AMM and HAMM algorithms in \cite{Pan2022factor} for matrix completion problems.

Throughout this section, all computational results are obtained by running MATLAB 2022a on a MacBook Pro (3.2 GHz, 16 GB RAM). The running time (in seconds) of all algorithms is calculated in the same way as in \cite{Pan2022factor}, including the calculation time on the initial point. To measure the accuracy of an output solution $Z^{\rm out}$ from various algorithms, just as in \cite{Toh2010accelerated,Pan2022factor}, we adopt both the relative error (RE) defined by ${\|Z^{\rm out}-Z^\diamond\|_F}/{\|Z^\diamond\|_F}$ and the normalized mean absolute error (NMAE) defined by ${\rm{NMAE}}=\frac{\sum_{(i,j)\in\varDelta\backslash\varGamma}|Z^{\rm out}_{i,j}-Z^\diamond_{i,j}|}{|\varDelta\backslash\varGamma|(r_{\max}-r_{\min})}$,
where $Z^\diamond$ is the target matrix, $\varDelta$ is the index set of given entries, $\varGamma$ is the index set of observed entries, $r_{\max}$ and $r_{\min}$ are lower and upper bounds 
on the given entries. For real datasets, many entries are unknown and hence we adopt the metric of NMAE. We use the abbreviation ``SR'' for sampling ratio and ``LS'' for line search.

\subsection{Matrix completion on simulated data}\label{simu_data}

In this subsection, we test Alg.\,\ref{FGL0alg1} and Alg.\,\ref{FGL0alg2} on simulated data under non-uniform sampling setting. First, we will verify some theoretical results of Alg.\,\ref{FGL0alg1} and Alg.\,\ref{FGL0alg2}, show the acceleration effect of the DR method and the LS procedure in Alg.\,\ref{FGL0alg1} and Alg.\,\ref{FGL0alg2}, and present the comparison results between the PALM algorithm and Alg.\,\ref{FGL0alg2}. Moreover, we will compare the performance of Alg.\,\ref{FGL0alg1}, Alg.\,\ref{FGL0alg2}, AMM and HAMM algorithms in terms of speed and accuracy under the same non-uniform sampling scheme as in \cite{Pan2022factor}. The values reported 
in the tables of this subsection are the mean results for five different instances
generated under the same setting.

Throughout this subsection, the target matrix $Z^\diamond$ is generated by $Z^\diamond=Z^\diamond_{L}(Z^\diamond_{R})^{\mathbb{T}}$, where each entry of $Z^\diamond_{L}\in\mathbb{R}^{n\times d^\star}$ and $Z^\diamond_{R}\in\mathbb{R}^{n\times d^\star}$ is sampled independently from a standard normal distribution $N(0,1)$. Thus, $Z^\diamond\in\mathbb{R}^{m\times n}$ is a rank $d^\star$ matrix with $m=n$. The index set $\varGamma$ is obtained from non-uniform sampling Scheme 1. The noisy observation entries $\bar{Z}^\diamond_{i_t,j_t}$ with $(i_t,j_t)\in\varGamma$ for $t=1,2,\ldots,p$ are generated as in \cite{Pan2022factor} via the following setting
\begin{equation*}
\bar{Z}^\diamond_{i_t,j_t}=Z^\diamond_{i_t,j_t}+\sigma({\xi_{t}}/{\|\xi\|})\|\mathcal{P}_{\varGamma}(Z^\diamond)\|_F,\mbox{ where }\xi_t\sim N(0,1)\mbox{ and }\xi=(\xi_1,\ldots,\xi_p)^{\mathbb{T}}.
\end{equation*}

\subsubsection{Theoretical verification on Alg.\,\ref{FGL0alg1} and Alg.\,\ref{FGL0alg2}}\label{lambd-cons}
In this part, we verify some theoretical results of Alg.\,\ref{FGL0alg1} (Alg.\,\ref{FGL0alg2}) for problem (\ref{FGL0_ne}) with $\lambda=\beta_1(10)$ ($\lambda=\beta_2(10)$) when $n=1000$, $d^\star=10$, SR=$0.1,0.15,0.2,0.25$ and noise level $\sigma=0.1$. Set $\text{Tol}=10^{-15}$ in the stopping criterion to focus on the long-term behaviour of the iterates generated by Alg.\,\ref{FGL0alg1} and Alg.\,\ref{FGL0alg2}.

As shown in Fig.\,\ref{colu_SR}, $(X^k,Y^k)$ generated by Alg.\,\ref{FGL0alg1} and Alg.\,\ref{FGL0alg2} own the column bound property after finite iterations. Moreover, Fig.\,\ref{nnzc_rank_comp} shows that after finite iterations, one has $\text{nnzc}(X^k)=\text{nnzc}(Y^k)=\text{rank}(X^k{Y^k}^{\mathbb{T}})$ and hence $\mathcal{I}(X^k)=\mathcal{I}(Y^k)$ by (\ref{rank-rel}). This means that $(X^k,Y^k)$ generated by Alg.\,\ref{FGL0alg1} and Alg.\,\ref{FGL0alg2} own the column consistency property after finite iterations. Under the same target matrix and different sampling ratios, $\nabla_Zf(X^k{Y^k}^{\mathbb{T}})Y^k_{\mathcal{I}_{X^k}\cap\mathcal{I}_{Y^k}}$ and $\nabla_Zf(X^k{Y^k}^{\mathbb{T}})^{\mathbb{T}}X^k_{\mathcal{I}_{X^k}\cap\mathcal{I}_{Y^k}}$ converge to $\bm{0}$ as shown in Fig.\,\ref{stat_dF}. Combining with Definition \ref{CR-sstr}, these mean that the limit points of $\{(X^k,Y^k)\}$ generated by Alg.\,\ref{FGL0alg1} and Alg.\,\ref{FGL0alg2} are strong stationary points of problem (\ref{FGL0_ne}), which are consistent with the statements in Theorem \ref{alg1-conv-thm}, Corollary \ref{conv_rate_ag1} and Theorem \ref{alg2-conv-thm}. Moreover, by Fig.\,\ref{stat_dF}, under the same sampling ratio, $\nabla_Zf(X^k{Y^k}^{\mathbb{T}})Y^k_{\mathcal{I}_{X^k}\cap\mathcal{I}_{Y^k}}$ and $\nabla_Zf(X^k{Y^k}^{\mathbb{T}})^{\mathbb{T}}X^k_{\mathcal{I}_{X^k}\cap\mathcal{I}_{Y^k}}$ have  similar convergence rates. In addition, as the sampling ratio increases, Alg.\,\ref{FGL0alg1} and Alg.\,\ref{FGL0alg2} need fewer iterations to satisfy the termination condition.
\begin{figure}[h]
\centering
\subfigure[SR=0.1]{\includegraphics[height=1.8in,width=2.5in]{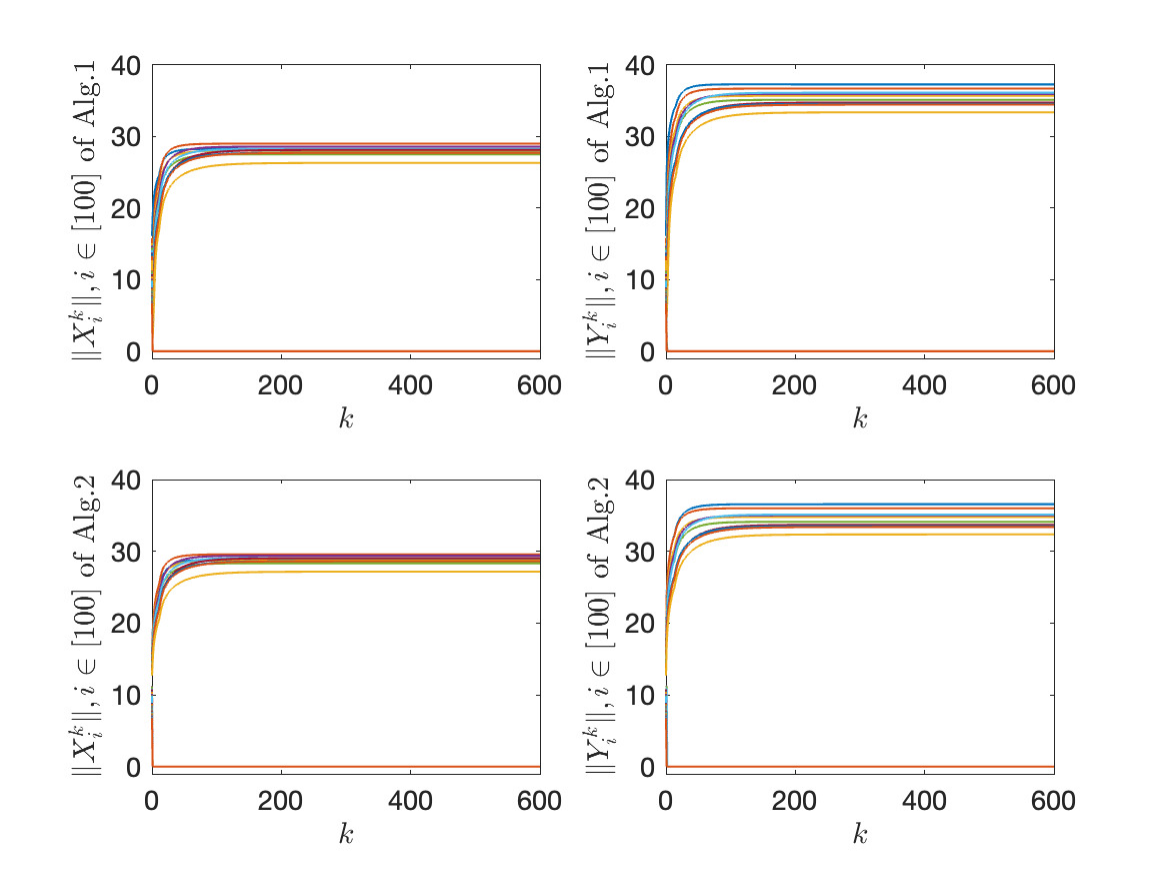}
\label{SR010}}
\subfigure[SR=0.25]{\includegraphics[height=1.8in,width=2.5in]{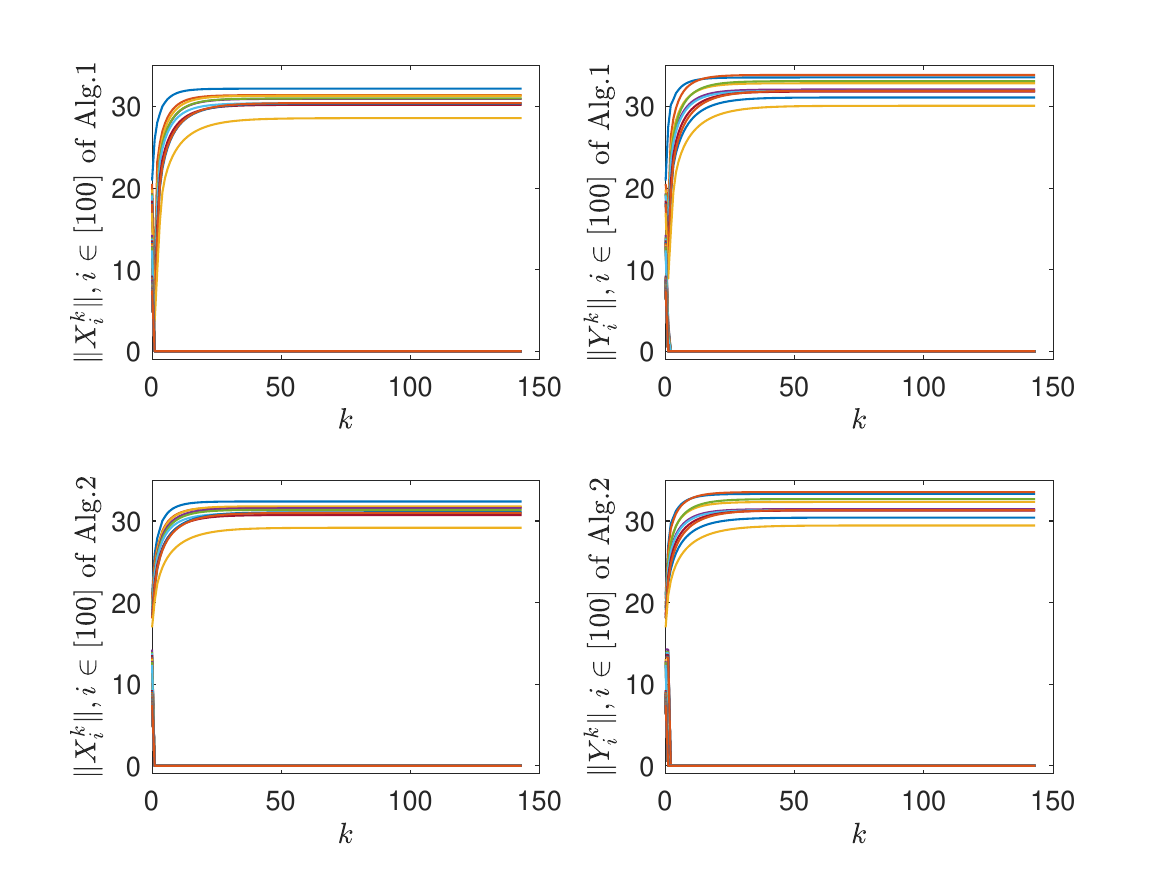}
\label{SR025}}
\caption{$\|X^k_i\|$ and $\|Y^k_i\|,i\in[d]$ generated by Alg.\,\ref{FGL0alg1} and Alg.\,\ref{FGL0alg2} against iteration $k$ when $d^\star=10$.}\label{colu_SR}
\end{figure}
\begin{figure}[h]
\centering
\subfigure[Alg.\,\ref{FGL0alg1}]{\includegraphics[height=1.5in,width=2.5in]{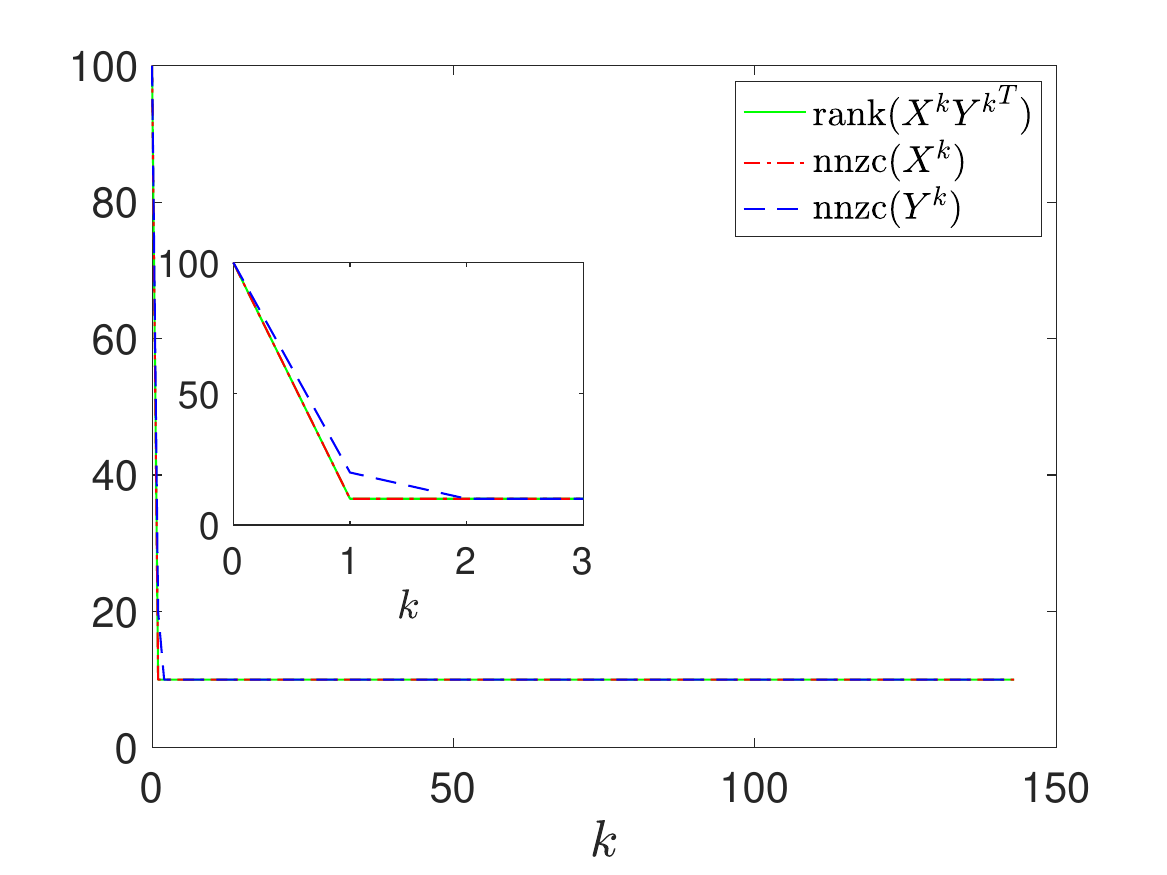}
\label{a1nnzc_rank}}
\hfil
\subfigure[Alg.\,\ref{FGL0alg2}]{\includegraphics[height=1.5in,width=2.5in]{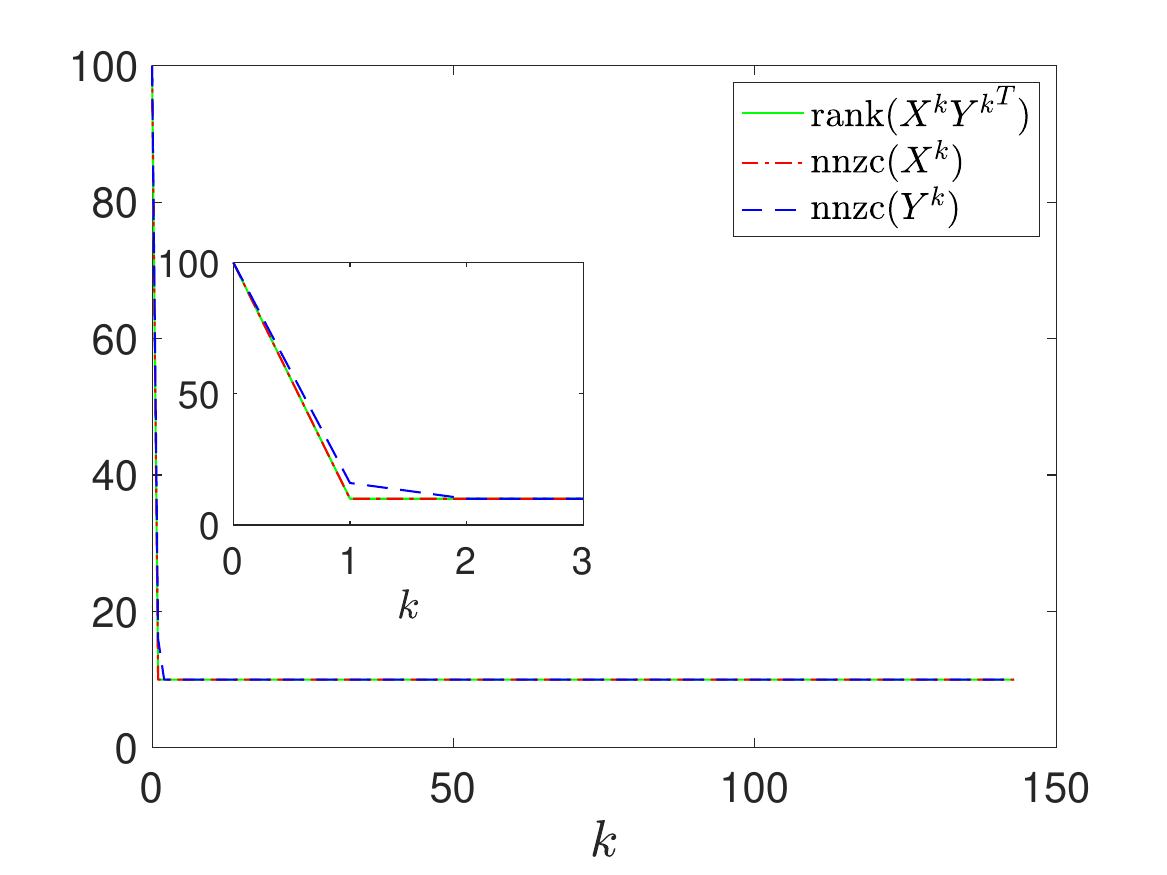}
\label{a2nnzc_rank}}
\caption{$\text{rank}(X^k{Y^k}^T)$, $\text{nnzc}(X^k)$ and $\text{nnzc}(Y^k)$ generated by Alg.\,\ref{FGL0alg1} and Alg.\,\ref{FGL0alg2} against iteration $k$ when $d^\star=10$ and SR=0.25.}\label{nnzc_rank_comp}
\end{figure}
\begin{figure}[h]
\centering
\subfigure[Alg.\,\ref{FGL0alg1}]{\includegraphics[height=1.6in,width=2.5in]{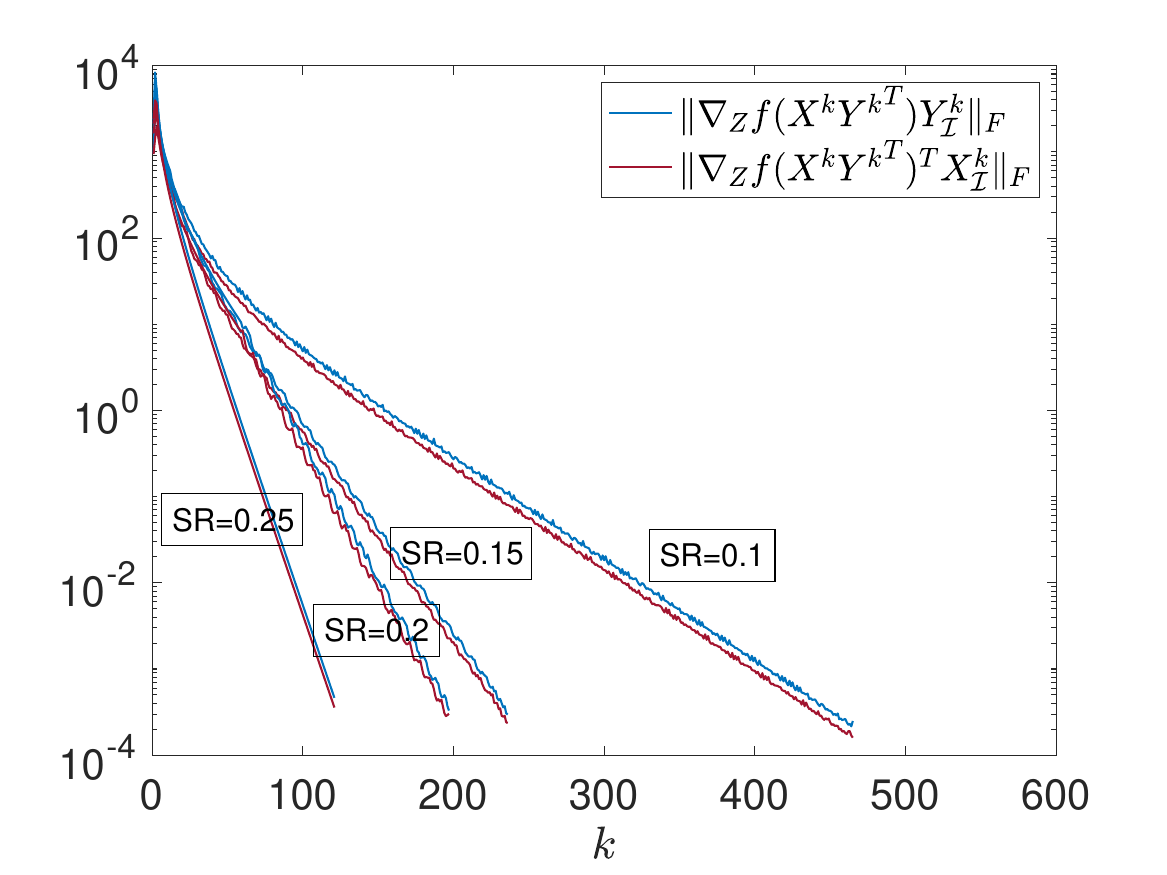}
\label{a1d8}}
\hfil
\subfigure[Alg.\,\ref{FGL0alg2}]{\includegraphics[height=1.6in,width=2.5in]{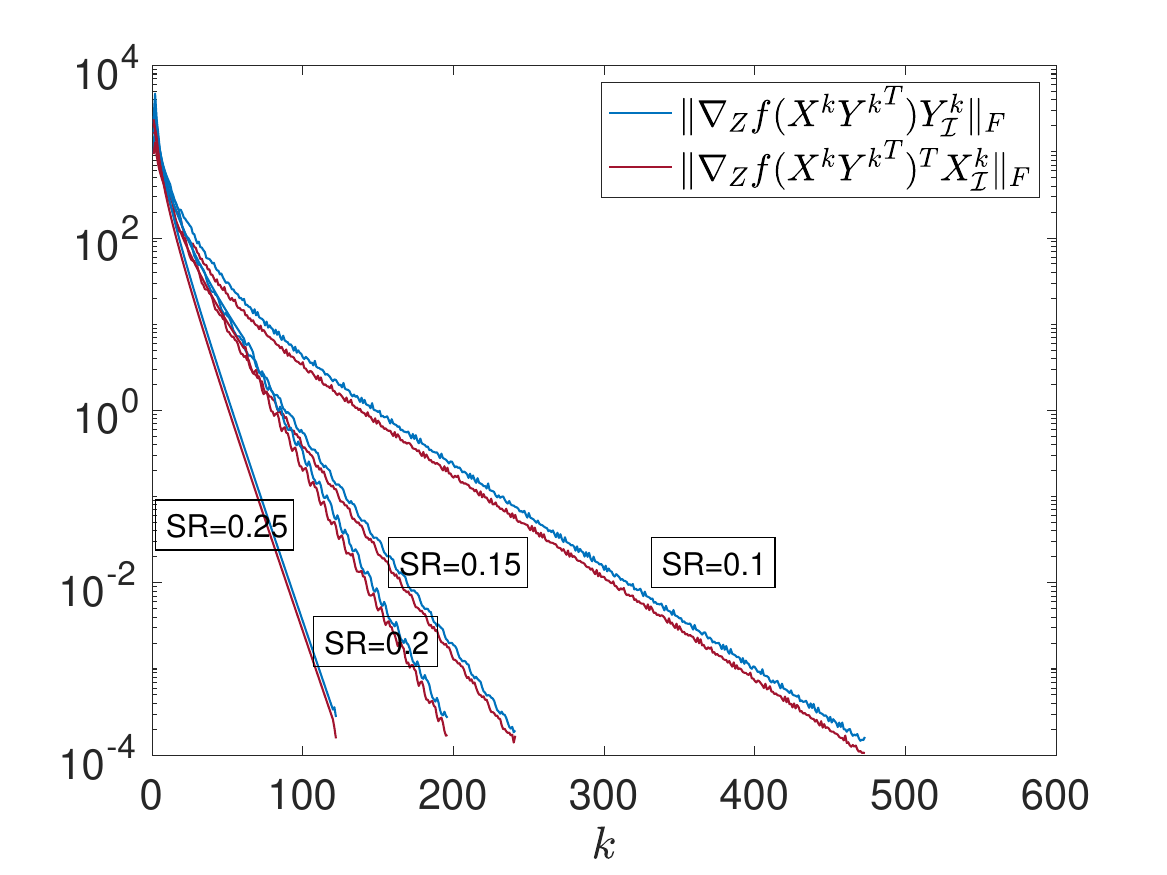}
\label{a2d8}}
\caption{$\|\nabla_Zf(X^k{Y^k}^T)Y^k_{\mathcal{I}}\|_F$ and $\|\nabla_Zf(X^k{Y^k}^T)^TX^k_{\mathcal{I}}\|_F$ with $\mathcal{I}=\mathcal{I}_{X^k}\cap\mathcal{I}_{Y^k}$ generated by Alg.\,\ref{FGL0alg1} and Alg.\,\ref{FGL0alg2} (in logarithmic scale) against iteration $k$ when $d^\star=5$ and SR=0.1, 0.15, 0.2 and 0.25.}\label{stat_dF}
\end{figure}

\subsubsection{The acceleration of DR method and LS procedure on Alg.\,\ref{FGL0alg1} and Alg.\,\ref{FGL0alg2}}\label{comp_palm}

In this part, we compare the performance of Alg.\,\ref{FGL0alg1} (without DR or LS), Alg.\,\ref{FGL0alg2} (without DR or LS) and PALM (with different values of $L_f$) for problem (\ref{FGL0_ne}) when the noise level $\sigma=0.1$.

In Table \ref{Comp_ag1} and Table \ref{Comp_ag2}, ``without LS (A)'' refers to Alg.\,\ref{FGL0alg1} (Alg.\,\ref{FGL0alg2}) with $\iota_{1,k}=\max\{\underline{\iota},\|Y^k\|^2/2\}$, $\iota_{2,k}=\max\{\underline{\iota},\|\bar{X}^{k+1}\|^2/2\}$ and without (1a), (1c), (2a) and (2c), and ``without LS (B)'' refers to Alg.\,\ref{FGL0alg1} with $\iota_{1,k}=\max\{\underline{\iota},\|Y^k\|^2\}$, $\iota_{2,k}=\max\{\underline{\iota},\|\bar{X}^{k+1}\|^2\}$ and without (1a), (1c), (2a) and (2c). In Table \ref{Comp_ag2}, PALM (0.5 or 1) is the PALM algorithm in Remark \ref{PALMcom} with $L_f=0.5$ or $1$. From Table \ref{Comp_ag1} and Table \ref{Comp_ag2}, it can be seen that the proposed DR method in subsection \ref{CR_mtd} and the designed LS procedure are beneficial for accelerating Alg.\,\ref{FGL0alg1} and Alg.\,\ref{FGL0alg2}, and Alg.\,\ref{FGL0alg2} runs faster than PALM for problem (\ref{FGL0_ne}). Through the DR method in subsection 5.3, we can see that when the rank of the restored matrix by Alg.\,\ref{FGL0alg1} is larger, the dimension of the variable in Alg.\,\ref{FGL0alg1} is larger. 
This results in a relatively large amount of calculation and a relatively small step size for each iteration. Therefore, Alg.\,1 without LS (B) is much slower than Alg.\,1 without LS (A) as shown in Table \ref{Comp_ag1}.

\begin{table}[h]
\setlength{\belowcaptionskip}{0.01cm}
\centering
\scriptsize
\caption{\small Average rank, RE and running time of Alg.\,\ref{FGL0alg1} (without DR or LS) for (\ref{FGL0_ne}) when SR=0.1}\label{Comp_ag1}
\begin{tabular}{ccccccc}
\hline
$(n,\,d^\star,\,\lambda)$& \multicolumn{3}{l}{\quad $(1000,\,5,\,\beta_1(5))$}&\multicolumn{3}{l}{\quad $(1000,\,10,\,\beta_1(10))$}\\
\cmidrule(lr){2-4} \cmidrule(lr){5-7}
& rank & RE& time& rank & RE&  time \\
\hline
Alg.\,\ref{FGL0alg1}
&5&0.0472 &{\color{blue} 1.5}
&10&0.0719 &{\color{blue}1.8}\\
\hline
Alg.\,\ref{FGL0alg1} without DR
&5&0.0472 &3.5
&10&0.0719 &4.2\\
\hline
Alg.\,\ref{FGL0alg1} without LS (A)
&5&0.0473 &2.2
&10&0.0720 &2.4\\
\hline
Alg.\,\ref{FGL0alg1} without LS (B)
&64&0.6570 &20.7
&100&0.6794 &14.3\\
\hline
$(n,\,d^\star,\,\lambda)$& \multicolumn{3}{l}{\quad $(2000,\,20,\,\beta_1(20))$}&\multicolumn{3}{l}{\quad $(4000,\,40,\,\beta_1(40))$}\\
\cmidrule(lr){2-4} \cmidrule(lr){5-7}
& rank & RE& time& rank & RE&  time \\
\hline
Alg.\,\ref{FGL0alg1}
&20&0.0707 &{\color{blue} 5.2}
&40&0.0701 &{\color{blue} 15.4}\\
\hline
Alg.\,\ref{FGL0alg1} without DR
&20&0.0707 &8.1
&40&0.0701 &23.0\\
\hline
Alg.\,\ref{FGL0alg1} without LS (A)
&20&0.0708 &7.5
&40&0.0702 &20.3\\
\hline
Alg.\,\ref{FGL0alg1} without LS (B)
&99.6&0.6523 &68.9
&96.8&0.6386 &255.8\\
\hline
\end{tabular}
\end{table}
\begin{table}[h]
\setlength{\belowcaptionskip}{0.01cm}
\centering
\scriptsize
\caption{\small Average rank, RE and running time of Alg.\,\ref{FGL0alg2} (without DR or LS) and PALM for (\ref{FGL0_ne}) when SR=0.1}\label{Comp_ag2}
\begin{tabular}{ccccccc}
\hline
$(n,\,d^\star,\,\lambda)$& \multicolumn{3}{l}{\quad $(1000,\,5,\,\beta_2(5))$}&\multicolumn{3}{l}{\quad $(1000,\,10,\,\beta_2(10))$}\\
\cmidrule(lr){2-4} \cmidrule(lr){5-7} 
& rank & RE& time& rank & RE&  time \\
\hline
Alg.\,\ref{FGL0alg2}
&5&0.0471 &{\color{blue} 1.6}
&10&0.0717 &{\color{blue} 1.9}\\
\hline
Alg.\,\ref{FGL0alg2} without DR
&5&0.0471 &3.8
&10&0.0717 &4.5\\
\hline
Alg.\,\ref{FGL0alg2} without LS (A)
&5&0.0471 &2.1
&10&0.0717 &2.5\\
\hline
PALM (0.5)
&5&0.0471 &5.4
&10&0.0718 &6.5\\
\hline
PALM (1)
&10&0.7554 &50.8
&20&0.6930 &190.6\\
\hline
$(n,\,d^\star,\,\lambda)$& \multicolumn{3}{l}{\quad $(2000,\,20,\,\beta_2(20))$}&\multicolumn{3}{l}{\quad $(4000,\,40,\,\beta_2(40))$}\\
\cmidrule(lr){2-4} \cmidrule(lr){5-7}
& rank & RE& time& rank & RE&  time \\
\hline
Alg.\,\ref{FGL0alg2}
&20&0.0706 &{\color{blue} 5.6}
&40&0.0701 &{\color{blue} 16.0}\\
\hline
Alg.\,\ref{FGL0alg2} without DR
&20&0.0706 &9.8
&40&0.0701 &23.8\\
\hline
Alg.\,\ref{FGL0alg2} without LS (A)
&20&0.0707 &8.0
&40&0.0701 &21.3\\
\hline
PALM (0.5)
&20&0.0707 &14.2
&40&0.0701 &32.2\\
\hline
PALM (1)
&39.8&0.6984 &367.9
&73.6&0.7101 &114.3\\
\hline
\end{tabular}
\end{table}

\subsubsection{Comparison between Alg.\,\ref{FGL0alg1}, Alg.\,\ref{FGL0alg2}, AMM and HAMM}\label{comp_simu}
In this part, we compare the recovery performance of Alg.\,\ref{FGL0alg1}, Alg.\,\ref{FGL0alg2}
versus the AMM and HAMM algorithms in \cite{Pan2022factor} under different noise levels and sampling ratios. We adopt the adaptive tuning strategy for $\lambda$  with $n_\lambda=20$, $\delta_1=1$ and $\delta_2=3$ to run Alg.\,\ref{FGL0alg1}, Alg.\,\ref{FGL0alg2} for problem (\ref{FGL0_ne}). All settings of AMM and HAMM are the same as in \cite[subsection 5.3]{Pan2022factor} for problem (\ref{MS-FL20}). The listed times in Table \ref{Simu_com} are the total running time for solving \eqref{MS-FL20} with the adaptive tuning strategy on $\lambda$. In the table, we highlight the entries that are obviously better than the other compared entries in blue color. As shown in Table \ref{Simu_com}, Alg.\,\ref{FGL0alg1} and Alg.\,\ref{FGL0alg2} have marginally better recovery performance than the AMM and HAMM algorithms. Importantly, the running times of Alg.\,\ref{FGL0alg1} and Alg.\,\ref{FGL0alg2} are all less than AMM and HAMM. In particular, the lower the rank of the target matrix or the higher the noise, the better is the performance of Alg.\,\ref{FGL0alg1}. Moreover, the outperformance of Alg.\,\ref{FGL0alg2} in terms of ``RE'' is almost always the best.

Finally, we give a simple  example with full sampling  to show the difference in performance between the obtained solution from models (\ref{FGL0_ne}) and (\ref{MS-FL20}), and the computable global minimizer of problem (\ref{LRMP}).
We consider problem (\ref{LRMP}) with $f(Z)=\frac{1}{2}\|Z-{Z}^\diamond\|_F^2$ 
and $\lambda=1$, and the corresponding models (\ref{FGL0_ne}) and (\ref{MS-FL20}) with different $\mu$, where $Z^\diamond$ and the unique global minimizer $Z^{\rm opt}$ of the rank regularized problem (\ref{LRMP}) are given below:  
\begin{equation*}
{Z}^\diamond=\begin{bmatrix}-1&0&0&0\\ 0&3&0&-1\\ 0&0&1&0\\ 0&1&0&3\end{bmatrix},
\quad
Z^{\rm opt}=\begin{bmatrix}0&0&0&0\\0&3&0&-1\\0&0&0&0\\0&1&0&3\end{bmatrix}.
\label{eq-Zdiamond}
\end{equation*}
We compare the difference between the  solutions obtained from (\ref{FGL0_ne}) and (\ref{MS-FL20}) and the global minimizer $Z^{\rm opt}$. The error results are shown in Table \ref{glob_comp}, where the error is defined by $\|Z^{\rm out}-Z^{\rm opt}\|_F$ with $Z^{\rm out}$ being the solution obtained by various algorithms. It can be seen that Alg.\,\ref{FGL0alg1} and Alg.\,\ref{FGL0alg2} obtain higher quality solutions, and the  errors of the solutions obtained by AMM and HAMM are dependent on the penalty parameter $\mu$ and they gradually approach $Z^{\rm opt}$ as  $\mu$ decreases toward $0$. The objective function of model (\ref{MS-FL20}) with $\mu=0$ is equivalent to that of model (\ref{FGL0_ne}), which is consistent with the result in Table \ref{glob_comp}.

\begin{table}[h]
\setlength{\belowcaptionskip}{0cm}
\centering
\scriptsize
\caption{\small Average RE, rank and running time of four algorithms for non-uniformly sampled synthetic data}\label{Simu_com}
\begin{tabular}{cccccccc}
\hline
Noise & SR 
& \multicolumn{3}{l}{\qquad\qquad Alg.\,\ref{FGL0alg1}}
&\multicolumn{3}{l}{\qquad\qquad AMM}\\
\cmidrule(lr){3-5} \cmidrule(lr){6-8} 
&  & \,\,\,\,\,RE & rank &time & \,\,\,\,\,RE & rank & time\\
\hline
\multicolumn{8}{l}{\qquad\qquad Case 1: $n=1000$, $d^\star=6$ with $\sigma=0, 0.1, 0.2$.}\\
\hline
$\sigma=0$   
&0.15
& {0.0025} & 6 & \blue{1.1}
& 0.0071 & 6 & 7.6\\
&0.25
& 0.0014 & 6 & \blue{1.0}
& \blue{0.0006} & 6 & 9.8\\
\hdashline
$\sigma=0.1$
&0.15
& \blue{0.0389} & 6 & \blue{1.2}
& 0.0393 & 6 & 7.8\\
&0.25
& \blue{0.0274} & 6 & \blue{1.2}
& 0.0275 & 6 & 9.9\\
\hdashline
$\sigma=0.2$
&0.15 
& \blue{0.0777} & 6 & \blue{2.1}
& \blue{0.0777} & 6 & 7.5\\
&0.25
& \blue{0.0548} & 6 & \blue{2.2}
& 0.0549 & 6 & 9.8\\
			
\hline
\multicolumn{8}{l}{\qquad\qquad Case 2: $n=1000$, $d^\star=13$ with $\sigma=0, 0.1, 0.2$.}\\
\hline
$\sigma=0$
&0.15
& {0.0045} & 13 & \blue{1.4}
& 0.0087 & 13 & 8.6\\
&0.25 
& 0.0021 & 13 & \blue{1.2}
& {0.0015} & 13 & 10.6\\
\hdashline
$\sigma=0.1$
&0.15 
& \blue{0.0604} & 13 & \blue{1.4}
& 0.0609 & 13 & 9.0\\
&0.25
& \blue{0.0415} & 13 & \blue{1.2}
& 0.0416 & 13 & 10.2\\
\hdashline
$\sigma=0.2$
&0.15 
& \blue{0.1209} & 13 & \blue{2.5}
& 0.1213 & 13 & 9.0\\
&0.25
& \blue{0.0830} & 13 & {2.6}
& 0.0832 & 13 & 10.5\\
			
\hline
\multicolumn{8}{l}{\qquad\qquad Case 3: $n=1000$, $d^\star=20$ with $\sigma=0, 0.1, 0.2$.}\\
\hline
$\sigma=0$
&0.15
& {0.0084} & 20 & \blue{1.6}
& 0.0192 & 20 & 10.3\\
&0.25
& 0.0025 & 20 & \blue{1.3}
& 0.0022 & 20 & 11.1\\
\hdashline
$\sigma=0.1$
&0.15
& {0.0807} & 20 & \blue{2.1}
& 0.0840 & 20 & 10.4\\
&0.25 
& \blue{0.0531} & 20 & \blue{1.2}
& 0.0532 & 20 & 10.9\\
\hdashline
$\sigma=0.2$
&0.15 
& \blue{0.1612} & 20 & \blue{2.9}
& 0.1656 & 20 & 10.8\\
&0.25
& \blue{0.1064} & 20 & \blue{2.6}
& 0.1066 & 20 & 11.9\\
\hline
Noise & SR 
&\multicolumn{3}{l}{\qquad\qquad  Alg.\,\ref{FGL0alg2}}
&\multicolumn{3}{l}{\qquad\qquad HAMM}\\
\cmidrule(lr){3-5} \cmidrule(lr){6-8}
&  & \,\,\,\,\,RE & rank &time & \,\,\,\,\,RE & rank & time\\
\hline
\multicolumn{8}{l}{\qquad\qquad Case 1: $n=1000$, $d^\star=6$ with $\sigma=0, 0.1, 0.2$.}\\
\hline
$\sigma=0$   
&0.15
& \blue{0.0016} & 6 & \blue{1.1}
& 0.0069 & 6 & 4.4\\
&0.25
& {0.0009} & 6 & \blue{1.0}
& 0.0027 & 6 & 5.7\\
\hdashline
$\sigma=0.1$   
&0.15
& \blue{0.0389} & 6 & \blue{1.2}
& 0.0398 & 6 & 4.4\\
&0.25
& \blue{0.0274} & 6 & \blue{1.2}
& 0.0276 & 6 & 6.4\\
\hdashline
$\sigma=0.2$   
&0.15 
& \blue{0.0777} & 6 & {2.2}
& 0.0787 & 6 & 4.3\\
&0.25
& \blue{0.0548} & 6 & {2.7}
& 0.0551 & 6 & 5.7\\
			
\hline
\multicolumn{8}{l}{\qquad\qquad Case 2: $n=1000$, $d^\star=13$ with $\sigma=0, 0.1, 0.2$.}\\
\hline
$\sigma=0$
&0.15
& \blue{0.0028} & 13 & \blue{1.4}
& 0.0088 &13 & 4.9\\
&0.25 
& \blue{0.0013} & 13 & \blue{1.2}
& 0.0033 & 13 & 5.9\\
\hdashline
$\sigma=0.1$
&0.15 
& \blue{0.0604} & 13 & {1.5}
& 0.0616 &13 & 5.0\\
&0.25
& \blue{0.0415} & 13 & \blue{1.2}
& 0.0417 & 13 & 5.9\\
\hdashline
$\sigma=0.2$
&0.15 
& 0.1210 & 13 & {2.7}
& 0.1235 &13 & 5.0\\
&0.25
& \blue{0.0830} & 13 & \blue{2.3}
& 0.0835 & 13 & 5.9\\
			
\hline
\multicolumn{8}{l}{\qquad\qquad Case 3: $n=1000$, $d^\star=20$ with $\sigma=0, 0.1, 0.2$.}\\
\hline
$\sigma=0$
&0.15
& \blue{0.0066} & 20 & \blue{1.6}
& 0.0110 & 20 & 5.0\\
&0.25
& \blue{0.0016} & 20 & \blue{1.3}
& 0.0038 & 20 & 6.4\\
\hdashline
$\sigma=0.1$
&0.15
& \blue{0.0806} & 20 & \blue{2.1}
& 0.0821 & 20 & 5.3\\
&0.25
& \blue{0.0531} & 20 & \blue{1.2}
& 0.0535 & 20 & 6.3\\
\hdashline
$\sigma=0.2$
&0.15 
& {0.1613} & 20 & \blue{2.9}
& 0.1660 & 20 & 4.9\\
&0.25
& \blue{0.1064} & 20 & {2.7}
& 0.1072 & 20 & 5.6\\
\hline
\end{tabular}
\end{table}

\begin{table}[h]
\renewcommand{\arraystretch}{1.5}
\setlength{\belowcaptionskip}{-0.05cm}
\centering
\scriptsize
\caption{The error results of Alg.\,\ref{FGL0alg1} and Alg.\,\ref{FGL0alg2} for (\ref{FGL0_ne}), and AMM and HAMM for (\ref{MS-FL20}) with different $\mu$}\label{glob_comp}
\begin{tabular}{|cc||cccccc|}
\hline
& & $\mu$ in (\ref{MS-FL20}) & $2$& $1$ & $10^{-1}$& $10^{-5}$& $10^{-10}$ \\
\hline
Alg.\,\ref{FGL0alg1}
&$1.6\cdotp10^{-15}$
&AMM
&$2.8$
&$1.4$
&$1.4\cdotp10^{-1}$
&$1.4\cdotp10^{-5}$
&$1.4\cdotp10^{-10}$
\\
\hline
Alg.\,\ref{FGL0alg2}
&$1.6\cdotp10^{-15}$
&HAMM
&$2.8$
&$1.4$
&$1.4\cdotp10^{-1}$
&$1.4\cdotp10^{-5}$
&$1.4\cdotp10^{-10}$
\\
\hline
\end{tabular}
\end{table}

\subsection{Matrix completion on real data}\label{real_data}
In this subsection, we use some real datasets (Jester joke dataset and MovieLens dataset) to compare the numerical performance of Alg.\,\ref{FGL0alg1} and Alg.\,\ref{FGL0alg2} for problem (\ref{FGL0_ne}), and AMM and HAMM algorithms in \cite{Pan2022factor} for problem (\ref{MS-FL20}). The comparison results are listed in Tables \ref{jester}-\ref{Movie-1M}. It is worth noting that for these real datasets, the low rankness of the output matrix is what we expect, but this does not mean that the output with the lowest rank-one is the best.

The adaptive tuning strategy for $\lambda$ with $n_\lambda=50$, $\delta_1=0.1$ and $\delta_2=1$ is adopted to run Alg.\,\ref{FGL0alg1} and Alg.\,\ref{FGL0alg2}. The parameter setting of AMM and HAMM are the same as used in Tables 2-4 of \cite{Pan2022factor}. For each dataset, the data matrix with $i$th row corresponding to the ratings given by the $i$th user is regarded as the original incomplete data matrix $Z^o$. We generate the target matrix $Z^\diamond$ from $Z^o$ by some random selection of columns and rows and set $\varDelta:=\{(i,j):Z^\diamond_{ij}\text{ is given}\}$. For the given observed index set $\varGamma$, $\mathcal{P}_{\varGamma}(\bar{Z}^\diamond)=\mathcal{P}_{\varGamma}(Z^\diamond)$ in (\ref{FGL0_ne}) and (\ref{MS-FL20}). Throughout this subsection, the processing on these real datasets are consistent with that in \cite{Fang2018MP,Pan2022factor}.

In Table \ref{jester}, the used dataset is the Jester joke dataset from \url{http://www.ieor.berkeley.edu/~goldberg/jester-data/}, which contains $4.1$ million ratings for $100$ jokes from $73421$ users. The whole dataset contains three subdatasets: (1) Jester-1: $24983$ users who have rated $36$ or more jokes; (2) Jester-2: $23500$ users who have rated $36$ or more jokes; (3) Jester-3: $24938$ users who have rated between $15$ and $35$ jokes.
Following \cite{Fang2018MP,Pan2022factor}, 
we randomly select $n_u$ rows from $Z^o$ and randomly permute these ratings to generate $Z^\diamond\in\mathbb{R}^{{n_u}\times 100}$ and generate the observed index set $\varGamma$ by Scheme 1. The rating range is from $r_{\min}=-10$ to $r_{\max}=10$.

The datasets in Table \ref{Movie-100K} and Table \ref{Movie-1M} are Movie-100K and Movie-1M from the MovieLens dataset, which is available through \url{http://www.grouplens.org/node/73}. The Movie-100K dataset contains $100000$ ratings for $1682$ movies by $943$ users and Movie-1M dataset contains $1000209$ ratings of $3952$ movies made by $6040$ users. We generate $Z^\diamond$ by randomly selecting $n_r$ users and their $n_c$ column ratings from $Z^o$, and generate $\varGamma$ by sampling the observed entries with Scheme 1 and Scheme 2 for Movie-100K dataset, and Scheme 1 for Movie-1M dataset; For more details, refer to \cite{Fang2018MP,Pan2022factor}. Since the rating range in $Z^\diamond_{ij}$ is $[1,5]$, we let $r_{\min}=1$ and $r_{\max}=5$.

In these tables, the listed values are the average results of five different instances for each setting. As reported in \cite{Pan2022factor} for real datasets, HAMM is shown to be superior to AMM \cite{Pan2022factor}, ALS \cite{hastie2015matrix} and  ADMM \cite{Fang2018MP} for most instances. From Table \ref{jester}, it can be seen that Alg.\,\ref{FGL0alg1} and Alg.\,\ref{FGL0alg2} have smaller NMAE and less running time than AMM and HAMM. Combining with Table \ref{Movie-100K} and Table \ref{Movie-1M}, Alg.\,\ref{FGL0alg1} has demonstrated to perform better than the other three algorithms in terms of the recovery error and running time. 

\begin{table}[h]
\setlength{\belowcaptionskip}{-0.01cm}
\centering
\scriptsize
\caption{\small Average NMAE, rank and running time of four algorithms for Jester joke dataset}\label{jester}
\begin{tabular}{cccccccc}
\hline
Dataset&\!\! ($n_u$,\,SR)& \multicolumn{3}{l}{\qquad\qquad Alg.\,\ref{FGL0alg1}}
&\multicolumn{3}{l}{\qquad\qquad AMM}\\
\cmidrule(lr){3-5} \cmidrule(lr){6-8}
& & NMAE & rank & time & NMAE & rank & time \\
\hline 
Jester-1 
&(1000,\,0.15) 
& \blue{0.1908}  & 2.0 & {\color{blue} 0.4}
& 0.1941 & 1.0 & 0.9\\
&(1000,\,0.25) 
& \blue{0.1815} & 1.6 &{0.2}
& 0.1870 & 1.0 & 0.5\\
			
&(2000,\,0.15)  
& {0.1901} & 1.4 & {\color{blue} 0.3}
& 0.1945 & 1.0 & 0.9\\
&(2000,\,0.25)  
& \blue{0.1825} & 1.6 & {\color{blue} 0.2}
& 0.1884 & 1.0 & 0.6\\
			
&(4000,\,0.15)  
& {0.1918} & 1.4 & {\color{blue} 0.4}
& 0.1966 & 1.2 & 2.0\\
&(4000,\,0.25)  
& \blue{0.1789} & 1.8 & {\color{blue} 0.3}
& 0.1827 & 1.4 & 1.9\\
\cmidrule(lr){1-8}
Jester-2 
&(1000,\,0.15)  
& \blue{0.1882} & 2.0 & {\color{blue} 0.4}
& 0.1926 & 1.2 & 1.4\\
&(1000,\,0.25)  
& \blue{0.1783} & 2.0 & {\color{blue} 0.2}
& 0.1870 & 1.0 & 0.5\\

&(2000,\,0.15) 
& \blue{0.1893} & 2.0 & {\color{blue} 0.4}
& 0.1938 & 1.0 & 0.9\\
&(2000,\,0.25)  
& \blue{0.1826} & 1.6 & {\color{blue} 0.2}
& 0.1878 & 1.0 & 0.6\\

&(4000,\,0.15)  
& \blue{0.1905} & 1.6 & {\color{blue} 0.4}
& 0.1960 & 1.0 & 1.1\\
&(4000,\,0.25)  
& \blue{0.1772} & 2.0 & {0.4} 
& 0.1861 & 1.0 & 0.7\\
			
\cmidrule(lr){1-8}
Jester-3 
&(1000,\,0.15) 
& \blue{0.2376} & 2.6 & {\color{blue} 0.6}
& 0.2767 & 3.4 & 7.6\\
&(1000,\,0.25)  
& \blue{0.2261} & 2.6 & {\color{blue} 0.4}
& 0.2852 & 7.8 & 12.3\\
			
&(2000,\,0.15)  
&\blue{0.2312} & 2.2 & {\color{blue} 0.6}
& 0.2788 & 1.6 & 4.8\\
&(2000,\,0.25)  
& \blue{0.2217} & 2.6 & {\color{blue} 0.5}
& 0.2456 & 6.2 & 12.2\\
			
&(4000,\,0.15)  
& \blue{0.2250} & 2.2 & {\color{blue} 0.7}
& 0.2621 & 1.4 & 5.4\\
&(4000,\,0.25)  
& \blue{0.2167} & 2.4 & {\color{blue} 0.6}
& 0.2554 & 6.6 & 17.1\\
\hline
Dataset&\!\! ($n_u$,\,SR) &\multicolumn{3}{l}{\qquad\qquad Alg.\,\ref{FGL0alg2}}
&\multicolumn{3}{l}{\qquad\qquad HAMM}\\
\cmidrule(lr){3-5} \cmidrule(lr){6-8} 
& & NMAE & rank & time & NMAE & rank & time \\
\hline 
Jester-1 
&(1000,\,0.15) 
& {0.1921}  & 1.8 & {\color{blue} 0.4}
& 0.1940 & 1.0 & {\color{blue} 0.4}\\
&(1000,\,0.25) 
& {0.1837} & 1.4 & {\color{blue} 0.1}
& 0.1870 &1.0 & 0.3\\
			
&(2000,\,0.15)  
& \blue{0.1892} & 1.8 & {0.4}
& 0.1944 &1.0 & 0.5\\
&(2000,\,0.25)  
& {0.1827} & 1.6 & {\color{blue} 0.2}
& 0.1884 & 1.0 & 0.4\\
			
&(4000,\,0.15)  
& \blue{0.1908} & 1.4 & {\color{blue} 0.4}
& 0.1970 & 1.2 & 1.1\\
&(4000,\,0.25)  
& 0.1827 & 1.4 & {\color{blue} 0.3}
& 0.1827 & 1.4 & 1.1\\
\cmidrule(lr){1-8}
Jester-2 
&(1000,\,0.15)  
& {0.1894} & 2.0 & {\color{blue} 0.4}
& 0.1928 & 1.2 & 0.6\\
&(1000,\,0.25)  
& {0.1800} & 1.8 & {\color{blue} 0.2}
& 0.1870 & 1.0 & 0.3\\

&(2000,\,0.15) 
& 0.1910 & 1.8 & {\color{blue} 0.4}
& 0.1938 & 1.0 & 0.5\\
&(2000,\,0.25)  
& 0.1845 & 1.4 & {\color{blue} 0.2}
& 0.1878 & 1.0 & 0.4\\

&(4000,\,0.15)  
& 0.1915 & 1.6 & {\color{blue} 0.4}
& 0.1959 & 1.0 & 0.6\\
&(4000,\,0.25)  
& {0.1810} & 1.6 & {\color{blue} 0.3}
& 0.1861 & 1.0 & 0.5\\
			
\cmidrule(lr){1-8}
Jester-3 
&(1000,\,0.15) 
& {0.2432} & 2.6 & {\color{blue} 0.6}
& 0.2640 & 3.6 & 1.5\\
&(1000,\,0.25)  
& {0.2348} & 2.4 & {\color{blue} 0.4}
& 0.2563 & 2.0 & 1.0\\
			
&(2000,\,0.15)  
& {0.2323} & 2.2 & {\color{blue} 0.6}
& 0.2537 & 2.8 & 1.9\\
&(2000,\,0.25)  
& {0.2301} & 2.8 & {0.6}
& 0.2437 & 1.6 & 1.1\\
			
&(4000,\,0.15)  
& {0.2298} & 2.4 & {0.9}
& 0.2633 & 1.8 & 1.9\\
&(4000,\,0.25)  
& {0.2226} & 2.6 & {0.8}
& 0.2381 & 4.8 & 2.6\\
\hline
\end{tabular}
\end{table}

\begin{table}[h]
\setlength{\belowcaptionskip}{-0.01cm}
\centering
\scriptsize
\caption{\small Average NMAE, rank and running time of four algorithms for Movie-100K dataset}\label{Movie-100K}
\begin{tabular}{cccccccc}
\hline
&SR& \multicolumn{3}{l}{\qquad\qquad Alg.\,\ref{FGL0alg1}}
&\multicolumn{3}{l}{\qquad\qquad AMM}\\
\cmidrule(lr){3-5} \cmidrule(lr){6-8}
& & NMAE & rank &time&  NMAE & rank & time  \\
\hline
Scheme 1
&0.10 
& \blue{0.2195} & 1.6 & {\color{blue} 1.4}
& 0.2427 & 1.0 & 8.0\\
&0.15  
& \blue{0.2112} & 1.6 & {\color{blue} 1.2}
& 0.2236 & 1.0 & 7.9\\
&0.20  
& \blue{0.2083} & 2.0 & {\color{blue} 1.5}
& 0.2158 & 1.0 & 8.3\\
&0.25  
& \blue{0.2033} & 1.8 & {\color{blue} 1.4}
& 0.2085 & 1.0 & 8.0\\
\cmidrule(lr){1-8}
Scheme 2
&0.10  
& \blue{0.2206} & 1.8 & {\color{blue} 1.5}
& 0.2453 & 1.0  & 8.3\\
&0.15  
& \blue{0.2120} & 1.8 & {\color{blue} 1.5}
& 0.2240 & 1.0 & 7.9\\
&0.20  
& \blue{0.2087} & 1.8 & {\color{blue} 1.4}
& 0.2156 & 1.0 & 8.6\\
&0.25  
& \blue{0.2042} & 1.6 & {\color{blue} 1.3}
& 0.2095 & 1.0 & 9.6\\
\hline
&SR &\multicolumn{3}{l}{\qquad\qquad Alg.\,\ref{FGL0alg2}} &\multicolumn{3}{l}{\qquad\qquad HAMM}\\
\cmidrule(lr){3-5} \cmidrule(lr){6-8} 
& & NMAE & rank &time&  NMAE & rank & time  \\
\hline
Scheme 1
&0.10 
& 0.2227 & 1.6 & {\color{blue} 1.4}
& 0.2292 & 1.0 & 1.5\\
&0.15  
& 0.2126 & 1.6 & {\color{blue} 1.2}
& 0.2173 & 1.0 & 1.5\\
&0.20  
& 0.2135 & 2.0 & {\color{blue} 1.5}
& 0.2123 & 1.0 & 1.6\\
&0.25  
& 0.2050 & 1.8 & {\color{blue} 1.4}
& 0.2067 & 1.0 & 1.6\\
\cmidrule(lr){1-8}
Scheme 2
&0.10  
& 0.2245 & 1.8 & 1.6
& 0.2301 &1.0 & 1.6\\
&0.15  
& 0.2153 & 1.8 & {\color{blue} 1.5}
& 0.2178 & 1.0 & 1.5\\
&0.20  
& 0.2122 & 1.8 & {\color{blue} 1.4}
& 0.2123 & 1.0 & 1.5\\
&0.25  
& 0.2074 & 1.6 & {\color{blue} 1.3}
& 0.2074 & 1.0 & 1.6\\
\hline
\end{tabular}
\end{table}

\begin{table}[h]
\setlength{\belowcaptionskip}{-0.01cm}
\centering
\scriptsize
\caption{\small Average NMAE, rank and running time of four algorithms for Movie-1M dataset \label{Movie-1M}}
\begin{tabular}{cccccccc}
\hline
($n_r$,\,$n_c$)&\!\! SR & \multicolumn{3}{l}{\qquad\qquad Alg.\,\ref{FGL0alg1}}
&\multicolumn{3}{l}{\ \ \  \qquad\qquad AMM}\\
\cmidrule(lr){3-5} \cmidrule(lr){6-8} 
& &\!\!\!\! NMAE&\!\!\! rank \!\!\!\! &time &  NMAE \!\!\!& rank \!\!\!& time\\
\hline
(1500,\,1500)
&0.15 
& \blue{0.2124} & 1.8 & {\color{blue} 1.6}
& 0.2244 & 1.0 & 9.9\\
&0.25 
& \blue{0.2010} & 1.8 & {\color{blue} 1.7}
& 0.2061 & 1.0 & 10.8\\
\cmidrule(lr){1-8}
(2000,\,2000)
&0.15 
& \blue{0.2041} & 1.8 & {\color{blue} 2.5}
& 0.2125 & 1.0 & 14.5\\
&0.25 
& \blue{0.1952}  & 1.6 & {\color{blue} 2.4}
& 0.2010 & 1.0 & 11.0\\
\cmidrule(lr){1-8}
(3000,\,3000)
&0.15 
& \blue{0.1993} & 1.4 & 3.7
& 0.2035 & 1.0 & 26.3\\
&0.25 
& \blue{0.1883} & 2.0 & 5.2
& 0.1948 & 1.0  & 20.0\\
\cmidrule(lr){1-8}
(6040,\,3706)  
&0.15 
& \blue{0.1956} & 1.6 & 30.4
& 0.1971 & 1.0 & 248.0\\
&0.25 
& \blue{0.1852} & 2.0 & 30.7
& 0.1916 & 1.0 & 128.0\\
\hline
($n_r$,\,$n_c$)&\!\! SR &\multicolumn{3}{l}{\qquad\qquad Alg.\,\ref{FGL0alg2}}
&\multicolumn{3}{l}{\qquad\qquad HAMM}\\
\cmidrule(lr){3-5} \cmidrule(lr){6-8} 
& &\!\!\!\! NMAE&\!\!\! rank \!\!\!\! &time &  NMAE \!\!\!& rank \!\!\!& time\\
\hline
(1500,\,1500)
&0.15 
& 0.2157 & 1.8 & {\color{blue} 1.6}
& 0.2171 & 1.0 & 1.8\\
&0.25 
& 0.2047 & 1.8 & {\color{blue} 1.7}
& 0.2037 & 1.0 & 1.9\\
\cmidrule(lr){1-8}
(2000,\,2000)
&0.15 
& 0.2069 & 1.8 & {\color{blue} 2.5}
& 0.2082 & 1.0 & 2.7\\
&0.25 
& 0.1963 & 1.6 & {\color{blue} 2.4}
& 0.1997 & 1.0 & 2.9\\
\cmidrule(lr){1-8}
(3000,\,3000)
&0.15 
& 0.2020 & 1.4 & {\color{blue} 3.6}
& 0.2014 & 1.0 & 5.1\\
&0.25 
& 0.1912 & 1.8 & {\color{blue} 4.7}
& 0.1944 & 1.0 & 5.5\\
\cmidrule(lr){1-8}
(6040,\,3706)
&0.15 
& 0.1978 & 1.4 & {\color{blue} 29.8}
& 0.1963 & 1.0 & 54.7\\
&0.25 
& 0.1881 & 1.8 & {\color{blue} 28.1}
& 0.1915 & 1.0 & 54.3\\
\hline
\end{tabular}
\end{table}

\section {Conclusions}\vspace{0.5ex}
In this paper, we focus on the factorized column-sparse regularized problem, which is an equivalent low-dimensional transformation of the rank regularized problem in the sense of global minimizers. 
By adding the bound constraints, the solution set of the considered problem (\ref{FGL0C}) covers all global minimizers of the rank regularized problem \eqref{LRMPR} that satisfy a predicted bound on the target matrix. Limited by the non-convexity from the loss and regularization functions, most of the previous work involved the general stationary points of the considered problem. 
Differently, by exploring some properties from its global minimizers but not all local minimizers, we strengthened the stationary point of the considered problem and designed two algorithms to illustrate the availability of this class of strong stationary points in theory and experiments. 
This work mainly contains the following three key points. First, we established the equivalent relation on the rank regularized problem (\ref{LRMPR}) and the considered problem (\ref{FGL0C}) in the sense of global minimizers. This combined with the dimension reduction from the matrix factorization makes the  model (\ref{FGL0C}) a promising alternative to problem (\ref{LRMPR}). Second, we gave some optimality analysis and a variety of relations between the problem (\ref{FGL0C}) and its relaxation problem (\ref{FGL0R}). 
Further, we established the equivalence on solving the strong stationary points of problems (\ref{FGL0C}) and (\ref{FGL0R}). Third, we designed two algorithms from different perspectives, and gave their commonalities and distinctions from both theoretical and numerical aspects. Moreover, through numerical experiments, we showed the 
superiority of the considered model (\ref{FGL0C}) and proposed algorithms. Meanwhile, we verified the acceleration of  our dimension reduction technique and line search procedure on the proposed algorithms.

\backmatter





\bmhead{Acknowledgments}

The authors would like to thank  Shaohua Pan and Ethan X. Fang for providing some codes in \cite{Fang2018MP} and \cite{Pan2022factor} for the numerical comparison in Section 6. The authors would also like to thank the Associate Editor and the referees for their helpful comments and suggestions to improve this paper.

\section*{Declarations}


Part of this work was done while Wenjing Li was with Department of Mathematics, National University of Singapore. The research of Wenjing Li is supported by the National Natural Science Foundation of China Grant (No.\,12301397) and the China Postdoctoral Science Foundation Grant (No.\,2023M730876). The research of Wei Bian is supported by the National Natural Science Foundation of China Grants (No.\,12271127, 62176073), the National Key Research and Development Program of China (No.\,2021YFA1003500) and the Fundamental Research Funds for the Central Universities of China (No.\,2022FRFK0600XX). The research of Kim-Chuan Toh is supported by the Ministry of Education, Singapore, under its Academic Research Fund Tier 3 grant call (MOE-2019-T3-1-010).

\bibliography{references}

\end{document}